\tikzset{wei/.style={draw=red,double=red!40!white,double distance=1.5pt,thin}}
\tikzset{bdot/.style={fill,circle,color=blue,inner sep=3pt,outer sep=0}}
\tikzset{dir/.style={postaction={decorate,decoration={markings,mark=at position .8 with {\arrow[scale=1.3]{>}}}}}}
\theoremstyle{plain}
\newtheorem{thm}{Theorem}[section]
\newtheorem{lem}[thm]{Lemma}
\newtheorem{prop}[thm]{Proposition}
\newtheorem{coro}[thm]{Corollary}
\theoremstyle{definition}
\theoremstyle{remark}
\newtheorem{rk}[thm]{Remark}
\newtheorem{df}[thm]{Definition}
\newtheorem{ex}[thm]{Example}
\def\Pol{\mathcal{P}ol}
\def\ui{{\mathbf{i}}}
\def\uj{{\mathbf{j}}}
\def\bbC{\mathbb{C}}
\def\bbN{\mathbb{N}}
\def\bbQ{\mathbb{Q}}
\def\bbZ{\mathbb{Z}}
\def\scrF{\mathscr{F}}
\def\frakS{\mathfrak{S}}
\def\calC{\mathcal{C}}
\def\calF{\mathcal{F}}
\def\calO{\mathcal{O}}
\def\frakg{\mathfrak{g}}
\def\bfe{\mathbf{e}}
\def\bfk{\mathbf{k}}
\def\bfB{\mathbf{B}}
\def\leqs{\leqslant}
\newcommand\restr[2]{{
  \left.\kern-\nulldelimiterspace 
  #1 
  \vphantom{\big|} 
  \right|_{#2} 
  }}
\def\homo{\operatorname{\it \mathscr{H}\kern-.25em om}}
\def\ext{\operatorname{\it \mathscr{E}\kern-.25em xt}}
\def\edo{\operatorname{\it \mathscr{E}\kern-.25em nd}}
\def\der{\operatorname{\it \mathscr{D}\kern-.25em er}}
\def\mod{\mathrm{mod}}
\def\ad{\mathrm{ad}}
\def\Hom{\mathrm{Hom}}
\def\End{\mathrm{End}}
\def\Rep{\mathrm{Rep}}
\title{\Huge Categorical representations and KLR algebras}
\author{Ruslan Maksimau \\\\
Institut Montpelli\'erain Alexander Grothendieck,\\
Universit\'e de Montpellier,\\
CC051, Place Eug\`ene Bataillon, 34095 Montpellier, France,\\
            ruslmax@gmail.com, ruslan.maksimau@umontpellier.fr
}
\date{}
\renewcommand\thesubsection{\thesection\Alph{subsection}}
\begin{document}

\maketitle
\setcounter{tocdepth}{2}

\begin{abstract}

We prove that the KLR algebra associated with the cyclic quiver of length $e$ is a subquotient of the KLR algebra associated with the cyclic quiver of length $e+1$. We also give a geometric interpretation of this fact. This result has an important application in the theory of categorical representations. We prove that a category with an action of $\widetilde{\mathfrak{sl}}_{e+1}$ contains a subcategory with an action of $\widetilde{\mathfrak{sl}}_{e}$.
We also give generalizations of these results to more general quivers and Lie types.
\end{abstract}

\tableofcontents

\section{Introduction}

Consider the complex affine Lie algebra $\widetilde{\mathfrak{sl}}_e={\mathfrak{sl}}_e[t,t^{-1}]\oplus \bbC\mathbf{1}$. In this paper, we study categorical representations of $\widetilde{\mathfrak{sl}}_e$. Our goal is to relate the notion of a categorical representation of $\widetilde{\mathfrak{sl}}_e$ with the notion of a categorical representation of $\widetilde{\mathfrak{sl}}_{e+1}$. 

The Lie algebra $\widetilde{\mathfrak{sl}}_e$ has generators $e_i$, $f_i$ for $i\in[0,e-1]$. Let $\alpha_0,\cdots,\alpha_{e-1}$ be the simple roots of $\widetilde{\mathfrak{sl}}_e$. Fix $k\in[0,e-1]$. Consider the following inclusion of Lie algebras $\widetilde{\mathfrak{sl}}_e\subset\widetilde{\mathfrak{sl}}_{e+1}$

\begin{equation}
\label{ch3:eq_inclusion-affine-sl}
e_r\mapsto
\left\{\begin{array}{rl}
e_r &\mbox{ if }r\in[0,k-1],\\
{[e_k,e_{k+1}]} &\mbox{ if }r=k,\\
e_{r+1} &\mbox{ if }r\in[k+1,e-1],
\end{array}\right.
\end{equation}
$$
f_r\mapsto
\left\{\begin{array}{rl}
f_r &\mbox{ if }r\in[0,k-1],\\
{[f_{k+1},f_k]} &\mbox{ if }r=k,\\
f_{r+1} &\mbox{ if }r\in[k+1,e-1].
\end{array}\right.
$$

It is clear that each $\widetilde{\mathfrak{sl}}_{e+1}$-module can be restricted to the subalgebra $\widetilde{\mathfrak{sl}}_{e}$ of $\widetilde{\mathfrak{sl}}_{e+1}$. So it is natural to ask if we can do the same with categorical representations.

First, we recall the notion of a categorical representation. Let $\bfk$ be a field. Let $\calC$ be an abelian $\Hom$-finite $\bfk$-linear category that admits a direct sum decomposition $\calC=\bigoplus_{\mu\in\bbZ^{e}}\calC_\mu$. A categorical representation of $\widetilde{\mathfrak{sl}}_e$ in $\calC$ is a pair of biadjoint functors $E_i,F_i\colon \calC\to\calC$ for $i\in[0,e-1]$ satisfying a list of axioms. The main axiom is that for each positive integer $d$ there is an algebra homomorphism $R_d(A_{e-1}^{(1)})\to \End(F^d)^{\rm op}$, where $F=\bigoplus_{i=0}^{e-1}F_i$ and $R_d(A_{e-1}^{(1)})$ is the KLR algebra of rank $d$ associated with the quiver $A_{e-1}^{(1)}$ (i.e., with the cyclic quiver of length $e$).

Let $\overline\calC$ be an abelian $\Hom$-finite $\bfk$-linear category. Assume that $\overline\calC=\bigoplus_{\mu\in\bbZ^{e+1}}\overline\calC_\mu$ has a structure of a categorical representation of $\widetilde{\mathfrak{sl}}_{e+1}$ with respect to functors $\overline E_i,\overline F_i$ for $i\in[0,e]$. We want to restrict the action of $\widetilde{\mathfrak{sl}}_{e+1}$ on $\overline\calC$ to $\widetilde{\mathfrak{sl}}_{e}$. The most obvious way to do this is to define new functors $E_i, F_i\colon \overline \calC\to\overline\calC$, $i\in [0,e-1]$ from the functors $\overline E_i, \overline F_i\colon \overline \calC\to\overline\calC$, $i\in [0,e]$ by the same formulas as in (\ref{ch3:eq_inclusion-affine-sl}). Of course, this makes no sense because the notion of a commutator of two functors does not exist. However, we are able to get a structure of a categorical representation on a subcategory $\calC\subset \overline\calC$ (and not on the category $\overline\calC$ itself). We do this in the following way.

Assume additionally that the category $\overline\calC_\mu$ is zero whenever $\mu$ has a negative entry.
For each $e$-tuple $\mu=(\mu_1,\cdots,\mu_e)\in\bbZ^e$ we consider the $(e+1)$-tuple $\overline\mu=(\mu_1,\cdots,\mu_k,0,\mu_{k+1},\cdots,\mu_e)$ and we set
$\calC_\mu=\overline\calC_{\overline\mu}$,
$$
\calC=\bigoplus_{\mu\in \bbZ^e}\calC_{\mu}.
$$
Next, consider the endofunctors of $\calC$ given by
$$
E_i=
\left\{
\begin{array}{lll}
\restr{\overline E_i}{\calC} &\mbox{ if } 0\leqslant i<k,\\
\restr{\overline E_{k}\overline E_{k+1}}{\calC} &\mbox{ if } i=k,\\
\restr{\overline E_{i+1}}{\calC} &\mbox{ if } k<i<e,
\end{array}
\right.
$$
$$
F_i=
\left\{
\begin{array}{lll}
\restr{\overline F_i}{\calC} &\mbox{ if } 0\leqslant i<k,\\
\restr{\overline F_{k+1}\overline F_k}{\calC} &\mbox{ if } i=k,\\
\restr{\overline F_{i+1}}{\calC} &\mbox{ if } k<i<e.
\end{array}
\right.
$$
The following theorem holds.

\smallskip
\begin{thm}
\label{ch3:thm_main-thm-categ-rep-int-art}
The category $\calC$ has the structure of a categorical representation of $\widetilde{\mathfrak{sl}}_{e}$ with respect to the functors
$E_0,\cdots, E_{e-1}$, $F_0,\cdots, F_{e-1}$.
\qed
\end{thm}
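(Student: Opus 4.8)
\emph{Strategy of proof.} The plan is to verify the axioms of a categorical representation of $\widetilde{\mathfrak{sl}}_e$ — the weight decomposition, the biadjointness of each pair $(E_i,F_i)$, the $\mathfrak{sl}_2$-relations on each weight component, and the action of the KLR algebra — for $\calC$ together with $E_0,\dots,E_{e-1},F_0,\dots,F_{e-1}$, the only non-formal ingredient being this paper's algebraic result that $R_d(A_{e-1}^{(1)})$ is a subquotient of $R_D(A_e^{(1)})$ for a suitable $D$. I would begin with the bookkeeping: that the $E_i,F_i$ really are endofunctors of $\calC$ — for $i\ne k$ this is immediate, since $\overline E_i,\overline F_i$ move the weight by $\pm\overline\alpha_i$ and do not disturb the coordinate inserted in position $k+1$ of $\overline\mu$, and for $i=k$ it is exactly the reason the $0$ is put there; that $(E_i,F_i)$ is a biadjoint pair — inherited from $(\overline E_i,\overline F_i)$ when $i\ne k$, and for $i=k$ because a composite of biadjoint pairs is biadjoint and the orders in $E_k=\overline E_k\overline E_{k+1}$ and $F_k=\overline F_{k+1}\overline F_k$ are reversed relative to one another; and that the $\widetilde{\mathfrak{sl}}_e$-weight attached to $\calC_\mu$ is the restriction along (\ref{ch3:eq_inclusion-affine-sl}) of the $\widetilde{\mathfrak{sl}}_{e+1}$-weight of $\overline\calC_{\overline\mu}$, which is a direct computation from the definition of $\overline\mu$ and the coroot images $\alpha_r^\vee\mapsto\overline\alpha_r^\vee$ ($r<k$), $\alpha_k^\vee\mapsto\overline\alpha_k^\vee+\overline\alpha_{k+1}^\vee$, $\alpha_r^\vee\mapsto\overline\alpha_{r+1}^\vee$ ($r>k$).

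Next I would check, on each weight component, the $\mathfrak{sl}_2$-type isomorphism between $E_iF_i$ and $F_iE_i$, corrected by the number of copies of the identity functor prescribed by the pairing of the weight with $\alpha_i^\vee$. For $i\ne k$ this is the same isomorphism already available in $\overline\calC$. The case $i=k$ is the one genuinely new point; it is the categorical counterpart of the Lie-algebra identity $[\,[e_k,e_{k+1}],[f_{k+1},f_k]\,]=\overline\alpha_k^\vee+\overline\alpha_{k+1}^\vee$. Starting from $E_kF_k=\overline E_k\overline E_{k+1}\overline F_{k+1}\overline F_k$, I would insert the $\mathfrak{sl}_2$-isomorphisms of $\overline\calC$ at the nodes $k$ and $k+1$ of $A_e^{(1)}$ together with the commutation isomorphisms $\overline E_k\overline F_{k+1}\cong\overline F_{k+1}\overline E_k$ and $\overline E_{k+1}\overline F_k\cong\overline F_k\overline E_{k+1}$ (available since these are distinct nodes), and keep track of the copies of the identity that appear; comparing the result with the analogous rewriting of $F_kE_k$ yields the required relation. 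This is a finite diagram chase with the units and counits of the adjunctions involved, requiring some care with signs.

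The crux is the KLR-algebra axiom: a homomorphism $R_d(A_{e-1}^{(1)})\to\End(F^d)^{\mathrm{op}}$, where $F=\bigoplus_{i=0}^{e-1}F_i$. Expanding $F^d$ according to the definitions of the $F_i$, each of its summands is a composite of the functors $\overline F_j$, the one indexed by a word $\ui\in[0,e-1]^d$ being obtained by keeping letters $<k$, replacing each letter $k$ by the two-letter block $\overline F_{k+1}\overline F_k$, and raising letters $>k$ by one. Matching words in this way exhibits $\End(F^d)^{\mathrm{op}}$ as a subquotient of $\End(\overline F^D)^{\mathrm{op}}$ (with $\overline F=\bigoplus_{i=0}^{e}\overline F_i$): one first takes the idempotent truncation $e_\chi(\,\cdot\,)e_\chi$ by the idempotent $e_\chi\in R_D(A_e^{(1)})$ selecting precisely the words that arise from the expansion, and then passes from $\overline\calC$ to the direct summand $\calC$. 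The $\widetilde{\mathfrak{sl}}_{e+1}$-structure on $\overline\calC$ supplies the homomorphism $R_D(A_e^{(1)})\to\End(\overline F^D)^{\mathrm{op}}$, and the algebraic theorem of the paper identifies $R_d(A_{e-1}^{(1)})$ with exactly the corresponding subquotient $e_\chi R_D(A_e^{(1)})e_\chi$ modulo a suitable two-sided ideal, compatibly with the generators $x_l$ and $\tau_l$ on the two sides; composing these maps produces the homomorphism sought. It then remains to check compatibility with the $\bbZ^e$-grading, and that the surviving finiteness and genericity axioms of a categorical representation are inherited by passage to subcategories and subquotients.

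\textbf{The main obstacle.} The real work is concentrated in the previous paragraph, and there it is the paper's algebraic result that carries the weight: recognising the word-condition subquotient of the KLR algebra of the length-$(e+1)$ cyclic quiver as the KLR algebra of the length-$e$ cyclic quiver, and checking that the identification intertwines dots with dots and crossings with crossings. Everything else — the weights, the adjunctions, the $\mathfrak{sl}_2$-relations — is bookkeeping; the mathematical content is the comparison of the two KLR presentations, which is precisely why it is isolated beforehand as the main lemma of the paper.
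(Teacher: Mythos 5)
Your strategy is essentially the paper's proof: the weight bookkeeping, the biadjunctions, and the use of Theorem~\ref{ch3:thm_KLR-e-e+1} to produce the KLR homomorphism all match the argument proving Theorem~\ref{ch3:thm_categ-e-e+1}. Two remarks. First, the paragraph in which you verify the categorical $\mathfrak{sl}_2$-isomorphism between $E_kF_k$ and $F_kE_k$ is not needed: Definition~\ref{ch3:def-categ_action-KLR} imposes only the weight decomposition, biadjointness of $(E,F)$, the KLR action on $F^d$, and nilpotency of $x$, with no $\mathfrak{sl}_2$-isomorphism axiom (in this framework that isomorphism is a consequence, not a hypothesis), so that diagram chase is superfluous here. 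Second, the one point you leave implicit is the hinge between the algebra and the categorification: why the map $\bfe R_{\overline\alpha,\bfk}(\overline\Gamma)\bfe\to\End(F_\alpha)^{\rm op}$, obtained by restricting the $\widetilde{\mathfrak{sl}}_{e+1}$-action to $\calC$ and truncating by $\bfe$, actually factors through the balanced quotient $S_{\overline\alpha,\bfk}(\overline\Gamma)$. The reason is that for any unordered sequence $\uj$ the functor $\overline F_{j_d}\cdots\overline F_{j_1}$ vanishes on $\calC$: the $(k{+}1)$-th coordinate of the weight starts at $0$ for objects of $\calC$ and becomes negative as soon as a prefix of $\uj$ contains more letters equal to $k{+}1$ than to $k$, landing in the region where $\overline\calC$ is assumed to vanish. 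Consequently each $e(\uj)$ with $\uj$ unordered acts by zero on $F_\alpha$, hence so does the two-sided ideal $\sum_{\uj}\bfe R_{\overline\alpha,\bfk}e(\uj)R_{\overline\alpha,\bfk}\bfe$. Once this is said, $\psi_\alpha=\overline\psi'_{\overline\alpha}\circ\Phi_{\alpha,\bfk}$ is well-defined, which is exactly the paper's construction.
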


\smallskip
Let us explain our motivation for proving Theorem \ref{ch3:thm_main-thm-categ-rep-int-art} (see \cite{Mak-Zuck} for more details). Let $O^\nu_{-e}$ be the parabolic category $\calO$ for $\widehat{\mathfrak{gl}}_N={\mathfrak{gl}}_N[t,t^{-1}]\oplus \bbC\mathbf{1}\oplus\bbC\partial$ with parabolic type $\nu$ at level $-e-N$. By \cite{RSVV}, there is a categorical representation of $\widetilde{\mathfrak{sl}}_{e}$ in $O^\nu_{-e}$. Now we apply Theorem \ref{ch3:thm_main-thm-categ-rep-int-art} to $\overline\calC=O^\nu_{-(e+1)}$. It happens that in this case the subcategory $\calC\subset \overline\calC$ defined as above is equivalent to $O^\nu_{-e}$. This allows us to compare the categorical representations in the category $\calO$ for $\widehat{\mathfrak{gl}}_N$ for two different (negative) levels. 

A result similar to Theorem \ref{ch3:thm_main-thm-categ-rep-int-art} has also recently appeared in \cite{RW}. It is applied in \cite{RW} in the following way. It is known from \cite{ChRou} that there is a categorical representation of $\widetilde{\mathfrak{sl}}_p$ in the category $\Rep(GL_n(\overline{\mathbb{F}_p}))$ of finite dimensional algebraic representations of $GL_n(\overline{\mathbb{F}_p})$. The paper \cite{RW} uses this fact to construct a categorical representation of the Hecke category on the principal block $\Rep_0(GL_n(\overline{\mathbb{F}_p}))$ of $\Rep(GL_n(\overline{\mathbb{F}_p}))$ for $p>n$. Their proof is in two steps. First they show that the action of $\widetilde{\mathfrak{sl}}_p$ on $\Rep(GL_n(\overline{\mathbb{F}_p}))$ induces an action of $\widetilde{\mathfrak{sl}}_n$ on some full subcategory of $\Rep(GL_n(\overline{\mathbb{F}_p}))$. The second step is to show that the action of $\widetilde{\mathfrak{sl}}_n$ constructed on the first step induces an action of the Hecke category on $\Rep_0(GL_n(\overline{\mathbb{F}_p}))$. The first step of their proof is essentially $p-n$ consecutive applications of Theorem \ref{ch3:thm_main-thm-categ-rep-int-art}.

The main difficulty in proving Theorem \ref{ch3:thm_main-thm-categ-rep-int-art} is showing that the action of the KLR algebra $R_d(A_{e}^{(1)})$ on $\overline F^d$, where $\overline F=\bigoplus_{i=0}^{e}\overline F_i$, yields an action of the KLR algebra $R_d(A_{e-1}^{(1)})$ on $F^d$. So, to prove the theorem, we need to compare the KLR algebra $R_d(A_{e}^{(1)})$ with the KLR algebra $R_d(A_{e-1}^{(1)})$. This is done in Section \ref{ch3:sec_KLR-Hecke}.

We introduce the abbreviations $\Gamma=A_{e-1}^{(1)}$ and $\overline\Gamma=A_{e}^{(1)}$. Let $\alpha=\sum_{i=0}^{e-1}d_i\alpha_i$ be a dimension vector of the quiver $\Gamma$. We consider the dimension vector $\overline\alpha$ of $\overline\Gamma$ defined by
$$
\overline\alpha=\sum_{i=0}^{k}d_i\alpha_i+\sum_{i=k+1}^{e}d_{i-1}\alpha_i.
$$

Let $R_\alpha(\Gamma)$ and $R_{\overline\alpha}(\overline\Gamma)$ be the KLR algebras associated with the quivers $\Gamma$ and $\overline\Gamma$ and the dimension vectors $\alpha$ and $\overline\alpha$. The algebra $R_{\overline\alpha}(\overline\Gamma)$ contains idempotents $e(\ui)$ parameterized by certain sequences $\ui$ of vertices of $\overline\Gamma$. In Section \ref{ch3:subs_bal-quot} we consider some sets of such sequences $\overline I^{\overline\alpha}_{\rm ord}$ and $\overline I^{\overline\alpha}_{\rm un}$. Set $\bfe=\sum_{\ui\in \overline I^{\overline\alpha}_{\rm ord}}e(\ui)\in R_{\overline\alpha}(\overline\Gamma)$ and
$$
{S}_{\overline\alpha}(\overline\Gamma)=\bfe{R}_{\overline\alpha}(\overline\Gamma)\bfe/\sum_{\ui\in \overline I^{\overline\alpha}_{\rm un}}\bfe R_{\overline\alpha}(\overline\Gamma) e(\ui)R_{\overline\alpha}(\overline\Gamma)\bfe.
$$
The main result of Section \ref{ch3:sec_KLR-Hecke} is the following theorem.

\smallskip
\begin{thm}
\label{ch3:thm-isom_KLR_e_e+1_intro}
There is an algebra isomorphism $R_{\alpha}(\Gamma)\simeq S_{\overline\alpha}(\overline\Gamma)$.
\qed
\end{thm}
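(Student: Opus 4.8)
The plan is to produce the isomorphism $R_\alpha(\Gamma)\simeq S_{\overline\alpha}(\overline\Gamma)$ by exhibiting an explicit map on the standard KLR generators (idempotents $e(\ui)$, polynomial generators $y_a$, and braiding generators $\psi_a$) and then checking it is well-defined, surjective, and injective. The key idea is that a vertex sequence $\ui=(i_1,\dots,i_d)$ for $\Gamma$ should be ``inflated'' to a sequence $\overline{\ui}$ for $\overline\Gamma$ by replacing each occurrence of the letter $k$ by the two-letter block $(k,k+1)$ and shifting letters $>k$ up by one; the set $\overline I^{\overline\alpha}_{\rm ord}$ should be precisely the set of sequences obtained by interleaving such blocks together with the ``unused'' letters, read in a fixed (``ordered'') normal form, while $\overline I^{\overline\alpha}_{\rm un}$ collects the ``unordered'' sequences that we kill in the quotient so that each inflated letter-block stays rigidly glued. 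Under this dictionary the idempotent $e(\ui)\in R_\alpha(\Gamma)$ is sent to $e(\overline{\ui})\in \bfe R_{\overline\alpha}(\overline\Gamma)\bfe$; a polynomial generator $y_a$ attached to the $a$-th strand is sent to the corresponding $y$ on the first strand of the $a$-th block (or to the unique strand, if that letter is not $k$); and a crossing $\psi_a$ is sent to the ``block crossing'' that moves the whole $a$-th block past the $(a{+}1)$-st block, i.e.\ a product of two (or one) elementary $\psi$'s in $R_{\overline\alpha}(\overline\Gamma)$, composed with the idempotents of $\bfe$.

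Concretely, I would carry out the following steps. First, set up the combinatorics of Section~\ref{ch3:subs_bal-quot}: describe $\overline I^{\overline\alpha}_{\rm ord}$ and $\overline I^{\overline\alpha}_{\rm un}$ precisely, verify that $\bfe$ is an idempotent, and record that $\bfe R_{\overline\alpha}(\overline\Gamma)\bfe$ has a basis (via the standard KLR monomial/PBW-type basis of Khovanov--Lauda--Rouquier) indexed by triples (source ordered sequence, reduced word in $S_d$ acting blockwise, polynomial), modulo the two-sided ideal generated by the $\overline I^{\overline\alpha}_{\rm un}$-idempotents. Second, define the map $\Phi\colon R_\alpha(\Gamma)\to S_{\overline\alpha}(\overline\Gamma)$ on generators as above and check the KLR defining relations one by one: the quadratic relation for $\psi_a^2$, the commutation of distant generators, the $y$-$\psi$ ``error term'' relations, and the braid relation for $\psi_a\psi_{a+1}\psi_a$. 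This is where the arithmetic of the quiver $\overline\Gamma$ enters: one must verify that moving a glued block $(j,j{+}1)$ past another block reproduces exactly the $\Gamma$-polynomials $Q_{i,j}$, using that in $A_e^{(1)}$ the vertex $k$ and $k{+}1$ are adjacent, and that after gluing the effective number of arrows between blocks matches the number of arrows in $A_{e-1}^{(1)}$ — in particular the block corresponding to the letter $k$ behaves, after passing to the quotient by $\overline I^{\overline\alpha}_{\rm un}$, like a single loopless vertex. Third, construct the inverse: send $e(\overline{\uj})\mapsto e(\uj)$ for $\overline{\uj}$ in normal form (and $0$ for sequences not of block type, which is forced in the quotient), and send the elementary generators of $\bfe R_{\overline\alpha}(\overline\Gamma)\bfe$ back; then check this respects the relations and the ideal, and that the two composites are the identity, e.g.\ by comparing on bases.

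The main obstacle I expect is the braid relation $\psi_a\psi_{a+1}\psi_a=\psi_{a+1}\psi_a\psi_{a+1}+(\text{correction})$ and its verification for the block crossings: unwinding a triple block crossing into elementary $\psi$'s in $R_{\overline\alpha}(\overline\Gamma)$ produces a long word, and one must show that all the ``junk'' terms supported on sequences in $\overline I^{\overline\alpha}_{\rm un}$ die in the quotient while the surviving correction term is exactly the KLR correction for $\Gamma$ with its sign/coefficient dictated by the arrows of $A_{e-1}^{(1)}$. A secondary but real difficulty is proving injectivity of $\Phi$: one needs that the elements $e(\overline{\ui})$ for $\ui$ ranging over ordered block sequences, together with blockwise-reduced words and polynomials in the ``block'' $y$'s, actually span $S_{\overline\alpha}(\overline\Gamma)$ — i.e.\ that in the quotient every non-block idempotent and every ``half-crossing'' of a glued pair is expressible in terms of the block generators, so that the obvious basis of $R_\alpha(\Gamma)$ maps onto a basis. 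This is a controlled but delicate ``straightening'' argument inside $\bfe R_{\overline\alpha}(\overline\Gamma)\bfe$ modulo the ideal, and it is the technical heart of Section~\ref{ch3:sec_KLR-Hecke}; a geometric reinterpretation via quiver varieties (promised in the abstract) can be used as an alternative route to this spanning/basis statement if the direct combinatorial argument becomes unwieldy.
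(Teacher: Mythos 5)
Your proposal is broadly sound as a plan of attack, but it takes a genuinely different route from the paper, and there is one concrete inaccuracy in the dictionary you set up. Where you propose to define the candidate map on generators and then verify all KLR defining relations by hand (quadratic, distant commutations, dot--crossing error terms, and especially the braid relation with its correction term), the paper avoids that computation entirely by funnelling everything through the polynomial representations. Concretely, the paper first shows (Lemmas~\ref{ch3:lem_unord-mapsto-ideal} and \ref{ch3:lem_KLR-pres-ideal}) that the standard polynomial representation of $\bfe R_{\overline\alpha,\bfk}\bfe$ descends to a representation of $S_{\overline\alpha,\bfk}$ on the quotient $\bfk^{(\overline I)}_{\overline\alpha,\mathrm{ord}}/J_{\overline\alpha,\mathrm{ord}}$; then (Lemma~\ref{ch3:lem_comp-operators}) it identifies this module with the faithful polynomial module $\bfk^{(I)}_\alpha$ of $R_{\alpha,\bfk}$ and checks that the candidate images $e(\phi(\ui))$, $x^*_r$, $\tau^*_r$ act by the same operators as $e(\ui)$, $x_r$, $\tau_r$. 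Since the representation of $R_{\alpha,\bfk}$ is faithful, the map is automatically well-defined and injective into the image; what remains is (i) that these elements generate $S_{\overline\alpha,\bfk}$, and (ii) that the induced representation of $S_{\overline\alpha,\bfk}$ is itself faithful. Both are handled by the basis analysis you anticipated: the paper classifies permutations in $\frakS_{\ui',\uj'}$ into balanced and unbalanced and shows that every monomial basis element of $\bfe R_{\overline\alpha,\bfk}\bfe$ either dies in $S_{\overline\alpha,\bfk}$ or equals $\pm b^*$ for a unique basis element $b$ of $R_{\alpha,\bfk}$. So the spanning/straightening step you call the ``technical heart'' is indeed where the real work is, but the paper does not construct an explicit two-sided inverse or verify the braid relation at all --- the faithful polynomial representation does that work for free. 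Your direct approach would probably also succeed, but the braid-relation check would be quite painful and is entirely sidestepped by the paper.

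Separately, your description of the map on crossings is incomplete. You write that a crossing $\psi_a$ is sent to ``a product of two (or one) elementary $\psi$'s.'' That is true when at most one of $i_a,i_{a+1}$ lies in $I_1$, but when both $i_a$ and $i_{a+1}$ lie in $I_1$ the block crossing is a product of \emph{four} elementary $\tau$'s, namely $\tau_{a'+1}\tau_{a'+2}\tau_{a'}\tau_{a'+1}$, and --- crucially --- when $i_a=i_{a+1}\in I_1$ (which is exactly the case that occurs in the cyclic-quiver application $\Gamma=A^{(1)}_{e-1}$, $I_1=\{k\}$) there is an additional \emph{sign}: the image is $-\tau_{a'+1}\tau_{a'+2}\tau_{a'}\tau_{a'+1}$. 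Without this sign the image does not act as a Demazure operator on the polynomial representation, and the quadratic relation $\tau_a^2e(\ui)=0$ for $i_a=i_{a+1}$ would fail. This is precisely the sort of ``arithmetic of $\overline\Gamma$'' that your second step would need to get right, and the sign would be easy to miss in a relation-by-relation verification but falls out automatically from the comparison of polynomial representations.
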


The paper has the following structure. In Section \ref{ch3:sec_KLR-Hecke} we study KLR algebras. In particular, we prove Theorem \ref{ch3:thm-isom_KLR_e_e+1_intro}. In Section \ref{ch3:sec_catO} we study categorical representations. We prove our main result about categorical representations (Theorem \ref{ch3:thm_main-thm-categ-rep-int-art}). We also generalize this theorem to arbitrary symmetric Kac-Moody Lie algebras. In Appendix \ref{ch3:app-geom_constr} we give a geometric construction of the isomorphism in Theorem \ref{ch3:thm-isom_KLR_e_e+1_intro}. In Appendix \ref{ch3:app-local_ring}, we give some versions of Theorems \ref{ch3:thm_main-thm-categ-rep-int-art} and \ref{ch3:thm-isom_KLR_e_e+1_intro} in type $A$ over a local ring.

It is important to emphasize the relation between the present paper and \cite{Mak-Zuck}. That preprint contains (an earlier version of) the results of the present paper and an application of these results to the category $\calO$ for $\widehat{\mathfrak{gl}}_N$. The preprint \cite{Mak-Zuck} is expected to be published as two different papers. The present paper is the first of them. It contains the results of the preprint \cite{Mak-Zuck} about KLR algebras and categorical representations. The second paper will give an application of the results of the first paper to the affine category $\calO$.

\section{KLR algebras}
\label{ch3:sec_KLR-Hecke}

For a noetherian ring $A$ we denote by $\mod(A)$ the abelian category of left finitely generated $A$-modules. We denote by $\bbN$ the set of non-negative integers.

\subsection{Kac-Moody algebras associated with a quiver}
\label{ch3:subs_KM-quiv}
Let $\Gamma=(I,H)$ be a quiver without $1$-loops with the set of vertices $I$ and the set of arrows $H$. For $i,j\in I$ let $h_{i,j}$ be the number of arrows from $i$ to $j$ and set also $a_{i,j}=2\delta_{i,j}-h_{i,j}-h_{j,i}$. Let $\frakg_I$ be the Kac-Moody algebra over $\bbC$ associated with the matrix $(a_{i,j})$. Denote by $e_i$, $f_i$ for $i\in I$ the Serre generators of $\frakg_I$.

\begin{rk}
\label{ch3:rk-KM-Lie}
By the Kac-Moody Lie algebra associated with the Cartan matrix $(a_{i,j})$ we understand the Lie algebra with the set of generators $e_i$, $f_i$, $h_i$, $i\in I$, modulo the following defining relations
$$
\begin{array}{rclr}
[h_i,h_j]&=&0,\\

[h_i,e_j]&=&a_{i,j}e_j,\\

[h_i,f_j]&=&-a_{i,j}e_j,\\

[e_i,f_j]&=&\delta_{i,j}h_i,\\

(\ad (e_i))^{1-a_{i,j}}(e_j)&=&0, &~i\ne j,\\

(\ad (f_i))^{1-a_{i,j}}(f_j)&=&0, &~i\ne j.\\
\end{array}
$$

In particular, if $(a_{i,j})$ is the affine Cartan matrix of type $A_{e-1}^{(1)}$, then we get the Lie algebra $\widetilde{\mathfrak{sl}}_e(\bbC)=\mathfrak{sl}_e(\bbC)\otimes\bbC[t,t^{-1}]\oplus\bbC \bm{1}$ (not $\mathfrak{sl}_e(\bbC)\otimes\bbC[t,t^{-1}]\oplus\bbC \bm{1}\oplus \bbC\partial$).

\end{rk}

\medskip
For each $i\in I$, let $\alpha_i$ be the simple root corresponding to $e_i$. Set
$$
Q_I=\bigoplus_{i\in I}\bbZ\alpha_i,\quad Q^+_I=\bigoplus_{i\in I}\bbN\alpha_i.
$$

For $\alpha=\sum_{i\in I}d_i\alpha_i\in Q^+_I$ denote by $|\alpha|$ its height, i.e., we have $|\alpha|=\sum_{i\in I}d_i$. Set $I^\alpha=\{\ui=(i_1,\cdots,i_{|\alpha|})\in I^{|\alpha|};~ \sum_{r=1}^{|\alpha|}\alpha_{i_r}=\alpha\}$.

\subsection{Doubled quiver}
\label{ch3:subs_not-quiv-I-Ibar}
Let $\Gamma=(I,H)$ be a quiver without $1$-loops. Fix a decomposition $I=I_0\sqcup I_1$ such that there are no arrows between the vertices in $I_1$. In this section we define a \emph{doubled quiver} $\overline\Gamma=(\overline I,\overline H)$ associated with $(\Gamma,I_0,I_1)$. The idea is to "double" each vertex in the set $I_1$ (we do not touch the vertices from $I_0$). We replace each vertex $i\in I_1$ by a couple of vertices $i^1$ and $i^2$ with an arrow $i^1\to i^2$. Each arrow entering $i$ should be replaced by an arrow entering $i^1$, each arrow coming from $i$ should be replaced by an arrow coming from $i^2$.

Now we describe the construction of $\overline\Gamma=(\overline I,\overline H)$ formally. Let $\overline I_0$ be a set that is in bijection with $I_0$. Let $i^0$ be the element of $\overline I_0$ associated with an element $i\in I_0$. Similarly, let $\overline I_1$ and  $\overline I_2$ be sets that are in bijection with $I_1$. Denote by $i^1$ and $i^2$ the elements of $\overline I_1$ and  $\overline I_2$ respectively that correspond to an element $i\in I_1$. Put $\overline I=\overline I_0\sqcup\overline I_1\sqcup \overline I_2$.
We define $\overline H$ in the following way. The set $\overline H$ contains $4$ types of arrows:
\begin{itemize}
    \item[\textbullet] an arrow $i^0\to j^0$ for each arrow $i\to j$ in $H$ with $i,j\in I_0$,
    \item[\textbullet] an arrow $i^0\to j^1$ for each arrow $i\to j$ in $H$ with $i\in I_0,j\in I_1$,
    \item[\textbullet] an arrow $i^2\to j^0$ for each arrow $i\to j$ in $H$ with $i\in I_1,j\in I_0$,
    \item[\textbullet] an arrow $i^1\to i^2$ for each vertex $i\in I_1$.
\end{itemize}


Set $I^\infty=\coprod_{d\in \bbN} I^d$, $\overline I^\infty=\coprod_{d\in \bbN} \overline I^d$, where $I^d$, $\overline I^d$ are the cartesian products.
The concatenation yields a monoid structure on $I^\infty$ and $\overline I^\infty$.
Let $\phi\colon I^\infty\to \overline I^\infty$ be the unique morphism of monoids such that for $i\in I\subset I^\infty$ we have
$$
\phi(i)=
\left\{\begin{array}{ll}
i^0 &\mbox{ if }i\in I_0,\\
(i^1,i^2) &\mbox{ if } i\in I_1.
\end{array}\right.
$$

There is a unique $\bbZ$-linear map $\phi\colon Q_I\to Q_{\overline I}$ such that $\phi(I^\alpha)\subset {\overline I}^{\phi(\alpha)}$ for each $\alpha\in Q^+_I$. It is given by
$$
\phi(\alpha_{i})=
\left\{\begin{array}{ll}
\alpha_{i^0} &\mbox{ if }i\in I_0,\\
\alpha_{i^1}+\alpha_{i^2}&\mbox{ if }i\in I_1.
\end{array}\right.
$$

\subsection{KLR algebras}
\label{ch3:subs_KLR}

Let $\bfk$ be a field. Let $\Gamma=(I,H)$ be a quiver without $1$-loops. For $r\in[1,d-1]$ let $s_r$ be the transposition $(r,r+1)\in\frakS_d$. For $\ui=(i_1,\cdots,i_d)\in I^d$ set $s_r(\ui)=(i_1,\cdots,i_{r-1},i_{r+1},i_r,i_{r+2},\cdots,i_d)$.
For $i,j\in I$ we set
$$
Q_{i,j}(u,v)=
\left\{\begin{array}{ll}
0& \mbox{ if }i=j,\\
(v-u)^{h_{i,j}}(u-v)^{h_{j,i}}& \mbox{ else}.
\end{array}\right.
$$

\smallskip
\begin{df}
\label{ch3:def_KLR}
Assume that the quiver $\Gamma$ is finite. The \emph{KLR-algebra} $R_{d,\bfk}(\Gamma)$ is the $\bfk$-algebra with the set of generators $\tau_1,\cdots,\tau_{d-1},x_1,\cdots,x_d,e(\ui)$ where $\ui\in I^d$,
modulo the following defining relations
\begin{itemize}
    \item[\textbullet] $e(\ui)e(\uj)=\delta_{\ui,\uj}e(\ui)$,
    \item[\textbullet] $\sum_{\ui\in I^d}e(\ui)=1$,
    \item[\textbullet] $x_re(\ui)=e(\ui)x_r$,
    \item[\textbullet] $\tau_re(\ui)=e(s_r(\ui))\tau_r$,
    \item[\textbullet] $x_rx_s=x_sx_r$,
    \item[\textbullet] $\tau_rx_{r+1}e(\ui)=(x_r\tau_r+\delta_{i_r,i_{r+1}})e(\ui)$,
    \item[\textbullet] $x_{r+1}\tau_re(\ui)=(\tau_rx_r+\delta_{i_r,i_{r+1}})e(\ui)$,
    \item[\textbullet] $\tau_rx_s=x_s\tau_r$, if $s\ne r,r+1$,
    \item[\textbullet] $\tau_r\tau_s=\tau_s\tau_r$, if $|r-s|>1$,
    \item[\textbullet] $\tau_r^2e(\ui)=
\left\{
\begin{array}{ll}
0 &\mbox{if } i_r=i_{r+1},\\
Q_{i_r,i_{r+1}}(x_r,x_{r+1})e(\ui) &\mbox{else},
\end{array}
\right.
$
    \item[\textbullet] $(\tau_r\tau_{r+1}\tau_r-\tau_{r+1}\tau_r\tau_{r+1})e(\ui)=$

$\left\{
\begin{array}{ll}
(x_{r+2}-x_{r})^{-1}(Q_{i_{r},i_{r+1}}(x_{r+2},x_{r+1})-Q_{i_r,i_{r+1}}(x_{r},x_{r+1}))e(\ui) &\mbox{if }i_r=i_{r+2},\\
0 &\mbox{else},
\end{array}
\right.$
\end{itemize}
for each $\ui$, $\uj$, $r$ and $s$. We may write $R_{d,\bfk}=R_{d,\bfk}(\Gamma)$. The algebra $R_{d,\bfk}$ admits a $\bbZ$-grading such that $\deg e(\ui)=0$, $\deg x_r=2$ and $\deg\tau_se(\ui)=-a_{i_s,i_{s+1}}$, for each $1\leqslant r\leqslant d$, $1\leqslant s< d$ and $\ui\in I^d$.
\end{df}

\smallskip
For each $\alpha\in Q^+_I$ such that $|\alpha|=d$ set $e(\alpha)=\sum_{\ui\in I^\alpha} e(\ui)\in R_{d,\bfk}$. It is a homogeneous central idempotent of degree zero. We have the following decomposition into a sum of unitary $\bfk$-algebras
$R_{d,\bfk}=\bigoplus_{|\alpha|=d}R_{\alpha,\bfk}$, where $R_{\alpha,\bfk}=e(\alpha)R_{d,\bfk}$.

Let $\bfk^{(I)}_d$ be the direct sum of copies of the ring $\bfk_d[x]:=\bfk[x_1,\cdots,x_d]$ labelled by $I^d$.
We write
\begin{equation}
\label{ch3:eq_k^I}
\bfk^{(I)}_d=\bigoplus_{\ui\in I^d}\bfk_d[x]e(\ui),
\end{equation}
where $e(\ui)$ is the idempotent of the ring $\bfk^{(I)}_d$ projecting to the component $\ui$. A polynomial in $\bfk_d[x]$ can be considered as an element of $\bfk^{(I)}_d$ via the diagonal inclusion.
For each $i,j\in I$ fix a polynomial $P_{i,j}(u,v)$ such that we have $Q_{i,j}(u,v)=P_{i,j}(u,v)P_{j,i}(v,u)$.

\smallskip
Denote by $\partial_r$ the Demazure operator on $\bfk_d[x]$, i.e., we have 
$$\partial_r(f)=(x_r-x_{r+1})^{-1}(s_r(f)-f).$$
The following is proved in \cite[Sec.~3.2]{Rouq-2KM}.

\begin{prop}
\label{ch3:prop_faith-rep-KLR}
The algebra $R_{d,\bfk}$ has a faithful representation in the vector space $\bfk^{(I)}_d$ such that the element $e(\ui)$ acts by the projection to $\bfk^{(I)}_de(\ui)$, the element $x_r$ acts by multiplication by $x_r$ and such that for $f\in\bfk_d[x]$ we have
\begin{equation}
\label{ch3:eq_action-on-polyn}
\tau_r\cdot fe(\ui)=
\left\{\begin{array}{ll}
\partial_r(f)e(\ui) &\mbox{ if }i_r=i_{r+1},\\
P_{i_r,i_{r+1}}(x_{r+1},x_r)s_r(f)e(s_r(\ui))&\mbox{ otherwise}.\\
\end{array}\right.
\end{equation}
\end{prop}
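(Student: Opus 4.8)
The plan is to verify directly that the formulas prescribed for the action of $R_{d,\bfk}$ on $\bfk^{(I)}_d$ are well-defined (i.e.\ respect all the defining relations of Definition \ref{ch3:def_KLR}) and then establish faithfulness separately. First I would check well-definedness relation by relation. The idempotent relations $e(\ui)e(\uj)=\delta_{\ui,\uj}e(\ui)$ and $\sum e(\ui)=1$ are immediate since $e(\ui)$ acts as the projection to the summand $\bfk_d[x]e(\ui)$. The relations involving only the $x_r$ and $e(\ui)$ are clear because multiplication operators on a polynomial ring commute. The relation $\tau_re(\ui)=e(s_r(\ui))\tau_r$ is read off from (\ref{ch3:eq_action-on-polyn}): in the case $i_r=i_{r+1}$ we have $s_r(\ui)=\ui$ and $\partial_r$ preserves the $\ui$-component; in the case $i_r\ne i_{r+1}$ the operator sends the $\ui$-component to the $s_r(\ui)$-component. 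The mixed relations $\tau_rx_{r+1}e(\ui)=(x_r\tau_r+\delta_{i_r,i_{r+1}})e(\ui)$ and its mirror reduce, in the equal case, to the standard identity $\partial_r(x_{r+1}f)=x_r\partial_r(f)+f$ (equivalently $\partial_r\circ x_{r+1}=x_r\circ\partial_r+\mathrm{id}$) and its counterpart $x_{r+1}\partial_r=\partial_r x_r+\mathrm{id}$, and in the unequal case to the fact that $s_r$ swaps $x_r$ and $x_{r+1}$ while the scalar polynomial $P_{i_r,i_{r+1}}(x_{r+1},x_r)$ only involves $x_r,x_{r+1}$ and commutes appropriately. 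The ``locality'' relations $\tau_rx_s=x_s\tau_r$ for $s\ne r,r+1$ and $\tau_r\tau_s=\tau_s\tau_r$ for $|r-s|>1$ follow because the two operators act on disjoint sets of variables/adjacent pairs.

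The two genuinely substantive relations are the quadratic relation $\tau_r^2e(\ui)=Q_{i_r,i_{r+1}}(x_r,x_{r+1})e(\ui)$ (or $0$) and the braid-type relation for $\tau_r\tau_{r+1}\tau_r-\tau_{r+1}\tau_r\tau_r$. For the quadratic relation, when $i_r=i_{r+1}$ one uses $\partial_r^2=0$; when $i_r\ne i_{r+1}$, applying $\tau_r$ twice on $fe(\ui)$ gives $P_{i_r,i_{r+1}}(x_r,x_{r+1})\,s_r\!\big(P_{i_r,i_{r+1}}(x_{r+1},x_r)\,s_r(f)\big)e(\ui)=P_{i_r,i_{r+1}}(x_r,x_{r+1})P_{i_{r+1},i_r}(x_r,x_{r+1})\,f\,e(\ui)$, and by the defining property $Q_{i,j}(u,v)=P_{i,j}(u,v)P_{j,i}(v,u)$ this equals $Q_{i_r,i_{r+1}}(x_r,x_{r+1})fe(\ui)$, noting $s_r(f)$ returns to $f$. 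For the braid relation one must carefully expand $\tau_r\tau_{r+1}\tau_r$ and $\tau_{r+1}\tau_r\tau_{r+1}$ acting on $fe(\ui)$, splitting into cases according to which of $i_r,i_{r+1},i_{r+2}$ coincide. When all three are equal, both sides give $\partial_r\partial_{r+1}\partial_r(f)=\partial_{r+1}\partial_r\partial_{r+1}(f)$, the braid relation for Demazure operators, so the difference is $0$. The nontrivial case is $i_r=i_{r+2}\ne i_{r+1}$, where one tracks the $P$-factors and the symmetric-group action and arrives, after simplification, at the divided-difference expression $(x_{r+2}-x_r)^{-1}\big(Q_{i_r,i_{r+1}}(x_{r+2},x_{r+1})-Q_{i_r,i_{r+1}}(x_r,x_{r+1})\big)e(\ui)$; the remaining mixed cases (exactly two of the three equal) yield $0$ on both sides by a shorter computation. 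This braid-relation case analysis is the main obstacle: it is a somewhat delicate polynomial identity and is the reason the statement cites \cite[Sec.~3.2]{Rouq-2KM} rather than proving it from scratch.

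Having established that the formulas define a representation $\rho\colon R_{d,\bfk}\to\End_{\bfk}(\bfk^{(I)}_d)$, it remains to prove faithfulness. Here the standard argument is a dimension/basis count: the KLR algebra has a known $\bfk[x_1,\dots,x_d]$-spanning set $\{\tau_w\,\underline x\,e(\ui)\}$ where $w$ ranges over a set of reduced-word representatives of $\frakS_d$, $\underline x$ over monomials, and $\ui$ over $I^d$ (this is the ``basis theorem'' for KLR algebras). One shows that the images of these elements under $\rho$ are linearly independent over $\bfk$ by a leading-term argument: filtering by polynomial degree, the operator $\rho(\tau_w e(\ui))$ has as its top-degree part (up to invertible polynomial coefficients) the permutation $w$ composed with multiplication, so applying these operators to suitable basis vectors of $\bfk^{(I)}_d$ produces a triangular (hence invertible) transition pattern. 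Since $\rho$ is surjective onto the span of a $\bfk$-basis of $R_{d,\bfk}$ and that span has the same dimension in each graded piece, $\rho$ is injective. Alternatively, and more efficiently, one simply invokes \cite[Sec.~3.2]{Rouq-2KM} for the whole statement; in the write-up I would present the well-definedness verification in the level of detail above and defer the faithfulness to that reference, since the polynomial-representation faithfulness of KLR algebras is by now textbook.
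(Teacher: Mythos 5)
The paper does not prove Proposition \ref{ch3:prop_faith-rep-KLR}; it is stated with the attribution ``The following is proved in \cite[Sec.~3.2]{Rouq-2KM}.'' There is therefore no in-paper argument to compare against. Your outline is a correct account of how that proof (and the equivalent one in \cite{KL}) runs --- relation-by-relation well-definedness, with the braid relation as the delicate polynomial identity, followed by faithfulness via the basis theorem and a triangularity argument --- and your stated fallback of deferring to the reference is exactly what the paper does.

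Two small corrections to the sketch. In the quadratic-relation computation for $i_r\ne i_{r+1}$, the prefactor produced by the \emph{second} application of $\tau_r$ must carry the indices of $s_r(\ui)$ (namely $i_{r+1},i_r$ in that order) and the same variable order as in (\ref{ch3:eq_action-on-polyn}). The correct chain is
$$
\tau_r^2\, f e(\ui) \;=\; P_{i_{r+1},i_r}(x_{r+1},x_r)\, s_r\!\bigl(P_{i_r,i_{r+1}}(x_{r+1},x_r)\, s_r(f)\bigr)\, e(\ui) \;=\; P_{i_{r+1},i_r}(x_{r+1},x_r)\, P_{i_r,i_{r+1}}(x_r,x_{r+1})\, f\, e(\ui),
$$
which equals $Q_{i_r,i_{r+1}}(x_r,x_{r+1})\, f\, e(\ui)$ by $Q_{i,j}(u,v)=P_{i,j}(u,v)P_{j,i}(v,u)$. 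Your version has $P_{i_r,i_{r+1}}(x_r,x_{r+1})$ as the outer factor and $P_{i_{r+1},i_r}(x_r,x_{r+1})$ (wrong argument order) in the final product; for a general admissible choice of $P$ these are not the same polynomials, so as written the middle and right expressions in your display are not equal. Second, for the faithfulness step the relevant filtration is by Bruhat length of $w$ (equivalently the number of $\tau$-crossings), not by polynomial degree; the leading term of $\rho(\tau_w e(\ui))$ in the length filtration is, after localizing away the $P$-factors, the twisted multiplication operator $f\mapsto w(f)$, and this is what makes the induction close. Both points are cosmetic; the argument you describe is sound and is the one in the cited source.
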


We will always choose $P_{i,j}$ in the following way:
$$
P_{i,j}(u,v)=(u-v)^{h_{j,i}}.
$$

\smallskip
\begin{rk}
\label{ch3:rk_basis-KLR}
There is an explicit construction of a basis of a KLR algebra (see \cite[Thm.~2.5]{KL}).
Assume $\ui,\uj\in I^\alpha$. Set $\frakS_{\ui,\uj}=\{w\in \frakS_d;~ w(\ui)=\uj\}$. For each permutation $w\in \frakS_{\ui,\uj}$ fix a reduced expression $w=s_{p_1}\cdots s_{p_r}$ and set $\tau_w=\tau_{p_1}\cdots\tau_{p_r}$.
Then the vector space $e(\uj)R_{\alpha,\bfk} e(\ui)$ has a basis $\{\tau_wx_1^{a_1}\cdots x_d^{a_d}e(\ui);~w\in\frakS_{\ui,\uj}, a_1,\cdots,a_d\in\bbN$\}. Note that the element $\tau_w$ depends on the reduced expression of $w$. 
Moreover, if we change the reduced expression of $w$, then the element $\tau_we(\ui)$ is changed only by a linear combination of monomials of the form $\tau_{q_1}\cdots\tau_{q_t}x_1^{b_1}\cdots x_d^{b_d}e(\ui)$
with $t<\ell(w)$. Note also that if $s_{p_1}\cdots s_{p_r}$ is not a reduced expression, then the element $\tau_{p_1}\cdots\tau_{p_r}e(\ui)$ may be written as a linear combination of monomials of the form $\tau_{q_1}\cdots\tau_{q_t}x_1^{b_1}\cdots x_d^{b_d}e(\ui)$
with $t<r$. Moreover, in both situations above, the linear combination can be chosen in such a way that for each monomial $\tau_{q_1}\cdots\tau_{q_t}x_1^{b_1}\cdots x_d^{b_d}e(\ui)$ in the linear combination, the expression  $s_{q_1}\cdots s_{q_t}$ is reduced. 
\end{rk}

\begin{rk}
\label{ch3:rem_KLR-infinite}
The algebra $R_{d,\bfk}$ in Definition \ref{ch3:def_KLR} is well-defined only for a finite quiver because of the second relation. However, the algebra $R_{\alpha,\bfk}$ is well-defined even if the quiver is infinite because each $\alpha$ uses a finite set of vertices. Thus, for an infinite quiver we can define $R_{d,\bfk}$ as $R_{d,\bfk}=\bigoplus_{|\alpha|=d}R_{\alpha,\bfk}$. Hovewer, in this case the algebra $R_{d,\bfk}$ is not unitary. 
\end{rk}

\subsection{Balanced KLR algebras}
\label{ch3:subs_bal-quot}
From now on the quiver $\Gamma$ is assumed to be finite. Fix a decomposition $I=I_0\sqcup I_1$ as in Section \ref{ch3:subs_not-quiv-I-Ibar} and consider the quiver $\overline\Gamma=(\overline I,\overline H)$ as in Section \ref{ch3:subs_not-quiv-I-Ibar}. Recall the decomposition $\overline I=\overline I_0\sqcup \overline I_1\sqcup \overline I_2$. In this section we work with the KLR algebra associated with the quiver $\overline\Gamma$. 

We say that a sequence $\ui=(i_1,i_2,\cdots,i_d)\in \overline I^d$ is \emph{unordered} if there is an index $r\in[1,d]$ such that the number of elements from $\overline I_2$ in the sequence $(i_1,i_2,\cdots,i_r)$ is strictly greater than the number of elements from $\overline I_1$. We say that it is \emph{well-ordered} if for each index $a$ such that $i_a=i^1$ for some $i\in I_1$, we have $a<d$ and $i_{a+1}=i^2$. 
We denote by $\overline I^\alpha_{\rm ord}$ and $\overline I^\alpha_{\rm un}$ the subsets of well-ordered and unordered sequences in $\overline I^\alpha$ respectively.


The map $\phi$ from Section \ref{ch3:subs_not-quiv-I-Ibar} yields a bijection
$$
\phi\colon Q^+_{I}\to\{\alpha=\sum_{i\in \overline I}d_i\alpha_i\in Q^+_{\overline I};~d_{i^1}=d_{i^2},~\forall i\in I_1\},\quad \alpha\mapsto\overline\alpha.
$$
Fix $\alpha\in Q^+_I$.
Set $\bfe=\sum_{\ui\in \overline I^{\overline\alpha}_{\rm ord}}e(\ui)\in R_{\overline\alpha,\bfk}(\overline\Gamma)$.


\smallskip
\begin{df}
\label{ch3:def_bal-KLR}
For $\alpha\in Q^+_{I}$, the \emph{balanced KLR algebra} is the algebra
$$
{S}_{\overline\alpha,\bfk}(\overline\Gamma)=\bfe{R}_{\overline\alpha,\bfk}(\overline\Gamma)\bfe/\sum_{\ui\in \overline I^{\overline\alpha}_{\rm un}}\bfe R_{\overline\alpha,\bfk}(\overline\Gamma) e(\ui)R_{\overline\alpha,\bfk}(\overline\Gamma)\bfe.
$$
\end{df}

We may write ${S}_{\overline\alpha,\bfk}(\overline\Gamma)={S}_{\overline\alpha,\bfk}$.

\smallskip
\begin{rk}
\label{ch3:rem_xa=xa+1}
Assume that $\ui=(i_1,\cdots,i_d)\in \overline I^{\overline\alpha}_{\rm ord}$. Let $a$ be an index such that $i_a\in \overline I_1$. We have the relation $\tau_a^2e(\ui)=(x_{a+1}-x_a)e(\ui)$ in $R_{\overline\alpha,\bfk}$. Moreover, we have $\tau_a^2e(\ui)=\tau_ae(s_a(\ui))\tau_ae(\ui)$ and $s_a(\ui)$ is unordered. Thus we have $x_ae(\ui)=x_{a+1}e(\ui)$ in $S_{\overline\alpha,\bfk}$.
\end{rk}


\subsection{The polynomial representation of $S_{\overline\alpha,\bfk}$}
\label{ch3:subs_pol-rep-BKLR}
We assume $\alpha=\sum_{i\in I}d_i\alpha_i\in Q^+_{I}$.
Let $\ui=(i_1,\cdots,i_d)\in \overline I^{\overline\alpha}_{\rm ord}$. Denote by $J(\ui)$ the ideal of the polynomial ring $\bfk_d[x]e(\ui)\subset \bfk^{(\overline I)}_d$ generated by the set
$$
\{(x_r-x_{r+1})e(\ui);~i_r\in \overline I_1\}.
$$

\smallskip
\begin{lem}
\label{ch3:lem_unord-mapsto-ideal}
Assume that $\ui\in \overline I^{\overline\alpha}_{\rm ord}$ and $\uj\in \overline I^{\overline\alpha}_{\rm un}$. Then each element of $e(\ui)R_{\overline\alpha,\bfk} e(\uj)$ maps $\bfk_d[x]e(\uj)$ to $J(\ui)$.
\end{lem}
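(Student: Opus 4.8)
The plan is to use the faithful polynomial representation of $R_{\overline\alpha,\bfk}(\overline\Gamma)$ from Proposition \ref{ch3:prop_faith-rep-KLR}, together with the explicit basis of $e(\ui)R_{\overline\alpha,\bfk}e(\uj)$ described in Remark \ref{ch3:rk_basis-KLR}, and reduce everything to understanding a single step where an element from $\overline I_2$ is moved past an element from $\overline I_1$.

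First I would record the mechanism behind $J(\ui)$. Since $\uj$ is unordered, there is some prefix of $\uj$ in which the count of $\overline I_2$-letters exceeds the count of $\overline I_1$-letters; equivalently, reading $\uj$ from the left, there is a letter $i^2\in\overline I_2$ that is not ``matched'' by an earlier $i^1$. To reach $\ui$ (which is well-ordered, so every $i^1$ is immediately followed by the corresponding $i^2$ and in particular no $i^2$ is unmatched) one must, at some point, apply a $\tau_r$ that swaps a pair of adjacent letters $(j^2, i^1)$ with $j^2\in\overline I_2$, $i^1\in\overline I_1$. I would make this precise: for any $w\in\frakS_{\uj,\ui}$ and any reduced word for $w$, the sequence of transpositions must include at least one $s_r$ applied to an intermediate sequence of the form $(\cdots, j^2, i^1, \cdots)$ — this is a combinatorial statement about how the ``excess'' of $\overline I_2$ over $\overline I_1$ in prefixes is corrected, and it holds because no such excess exists in $\ui$ while it does in $\uj$.

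Next I would analyze that swap. By construction of $\overline\Gamma$, there is an arrow $i^1\to i^2$ but no arrow $i^2\to i^1$ and no arrow in the other direction unless $j=i$; in all cases $j^2\ne i^1$ as vertices of $\overline\Gamma$, so by \eqref{ch3:eq_action-on-polyn} the operator $\tau_r$ acts on $\bfk_d[x]e(\cdots j^2, i^1,\cdots)$ by $f\mapsto P_{j^2,i^1}(x_{r+1},x_r)\,s_r(f)\,e(s_r(\cdots))$, and with our choice $P_{a,b}(u,v)=(u-v)^{h_{b,a}}$ the prefactor is $(x_{r+1}-x_r)^{h_{i^1,j^2}}$. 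Crucially $h_{i^1,j^2}\ge 1$: the only arrows out of $i^1$ in $\overline\Gamma$ go to $i^2$, so if $j^2$ receives an arrow from $i^1$ then $j^2=i^2$, i.e. $j=i$, and $h_{i^1,i^2}=1$; I need to confirm that whenever the swap $(j^2,i^1)\to(i^1,j^2)$ genuinely occurs along a path from $\uj$ to a well-ordered $\ui$, the relevant arrow $i^1\to i^2$ is present — which it is, since in $\ui$ the letter $i^1$ must be immediately followed by $i^2$, forcing $j=i$ at the decisive swap. Hence that $\tau_r$ introduces a factor $(x_r-x_{r+1})$ (up to sign) into the polynomial, and the resulting idempotent on the left carries an index that still has $i^1$ in position $r$ relative to the new letter $i^2$ in position $r+1$; tracking this factor forward through the remaining $\tau$'s and $x$'s (which only permute variables and multiply by polynomials, never divide) shows the whole product lands in $J(\ui)$, because $J(\ui)$ is generated by exactly the $(x_r-x_{r+1})e(\ui)$ with $i_r\in\overline I_1$ and these generators are stable under the relevant variable permutations.

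I would then assemble the argument: a general element of $e(\ui)R_{\overline\alpha,\bfk}e(\uj)$ is a $\bfk$-linear combination of $\tau_w x_1^{a_1}\cdots x_d^{a_d}e(\uj)$ with $w\in\frakS_{\uj,\ui}$; applying such a monomial to an arbitrary $g\in\bfk_d[x]e(\uj)$ and expanding $\tau_w$ along a reduced word, each term's action is a composite of the operators in \eqref{ch3:eq_action-on-polyn}, and by the previous paragraph at least one factor of the form $(x_\bullet - x_\bullet)$ associated to an $\overline I_1$-position of $\ui$ is produced and survives, so the image lies in $J(\ui)$; since $J(\ui)$ is an ideal, multiplying by the remaining polynomial factors keeps us there. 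The main obstacle is the combinatorial lemma in the second paragraph — proving cleanly that \emph{every} reduced word realizing a permutation $\uj\to\ui$ must contain a ``bad'' adjacent swap of an unmatched $i^2$ past the corresponding $i^1$, and that one may always take this swap to be between $i^1$ and its own partner $i^2$ so that $h_{i^1,i^2}=1$ rather than $0$; once that is nailed down, the polynomial bookkeeping is routine via Proposition \ref{ch3:prop_faith-rep-KLR}.
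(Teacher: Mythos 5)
Your general strategy --- expand via the basis of Remark \ref{ch3:rk_basis-KLR}, act on the faithful polynomial representation, and locate an adjacent swap where an $\overline I_2$-letter crosses the matching $\overline I_1$-letter to produce a factor $(x_r-x_{r+1})$ --- is the same opening move as the paper's, and your balanced-parentheses argument for the existence of such a swap is correct. But the real gap is not the combinatorial lemma you flag; it is the step you call ``routine polynomial bookkeeping.'' In formula (\ref{ch3:eq_action-on-polyn}), whenever two adjacent labels coincide, $\tau_s$ acts as the Demazure operator $\partial_s(f)=(x_s-x_{s+1})^{-1}(s_s(f)-f)$, so ``never divide'' is false and a previously produced factor can be destroyed. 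Concretely, with $I_1=\{a\}$, $\uj=(a^1,a^2,a^2,a^1)$ (unordered), $\ui=(a^1,a^2,a^1,a^2)$ (well-ordered), $w\in\frakS_{\uj,\ui}$ with one-line notation $(3,2,4,1)$, and the reduced expression $w=s_2s_1s_2s_3$: the rightmost $\tau_3$ is a partner swap producing $(x_4-x_3)$, the next $\tau_2$ yields $(x_3-x_2)(x_4-x_2)$, but $\tau_1$ is a Demazure operator on the tuple $(a^1,a^1,a^2,a^2)$ and $\partial_1\bigl[(x_3-x_2)(x_4-x_2)\bigr]=(x_1+x_2)-(x_3+x_4)$; after the final $\tau_2$ the output is $(x_1+x_3-x_2-x_4)e(\ui)$. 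This does lie in $J(\ui)$, but as the sum $(x_1-x_2)+(x_3-x_4)$, not as a multiple of either generator --- the factor did not survive, and forward-tracking fails.

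The paper's proof avoids tracking by inducting on the length $k$ of the $\tau$-monomial. Given $w(\uj)=\ui$, it finds $r$ with $i_r\in\overline I_1$ and $w^{-1}(r+1)<w^{-1}(r)$ (your combinatorial observation), then uses Remark \ref{ch3:rk_basis-KLR} to rewrite the monomial as $\tau_r\tau_{r_1}\cdots\tau_{r_h}e(\uj)$, a reduced expression of $w$ with $s_r$ \emph{leftmost}, modulo strictly shorter monomials absorbed by the induction hypothesis. In the leading term, $\tau_{r_1}\cdots\tau_{r_h}$ maps $\bfk_d[x]e(\uj)$ into $\bfk_d[x]e(s_r(\ui))$, and then the final operator $\tau_r$ is automatically a partner swap (because $\ui$ is well-ordered, so $i_{r+1}=i^2$ when $i_r=i^1$), producing a multiple of $(x_{r+1}-x_r)e(\ui)$ in the very last step with nothing afterward to disturb it. The missing move in your write-up is to place the decisive swap at the end of a suitably chosen reduced word and handle the rewriting corrections by induction, instead of locating a swap mid-word and tracking it forward.
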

\begin{proof}[Proof]
We will prove by induction on $k$ that for all $\ui\in \overline I^{\overline\alpha}_{\rm ord}$ and $\uj\in \overline I^{\overline\alpha}_{\rm un}$ and all $p_1,\cdots,p_k$ such that the permutation $w=s_{p_1}\cdots s_{p_k}\in\frakS_d$ satisfies $w(\uj)=\ui$, the monomial $\tau_{p_1}\cdots\tau_{p_k}$  maps $\bfk_d[x]e(\uj)$ to $J(\ui)$. 

Assume $k=1$. Write $p=p_1$. Let us write $\ui=(i_1,\cdots,i_d)$ and $\uj=(j_1,\cdots,j_d)$. Then we have $\ui=s_p(\uj)$. By assumptions on $\ui$ and $\uj$ we know that there exists $i\in I_1$ such that $i_{p}=j_{p+1}=i^1$ and $i_{p+1}=j_{p}=i^2$. In this case the statement is obvious because $\tau_p$ maps $fe(\uj)\in\bfk_d[x]e(\uj)$ to $(x_{p+1}-x_{p})s_p(f)e(\ui)$ by (\ref{ch3:eq_action-on-polyn}).

Now consider a monomial $\tau_{p_1}\cdots\tau_{p_k}$ such that the permutation $w=s_{p_1}\cdots s_{p_k}$ satisfies $w(\uj)=\ui$ and assume that the statement is true for all such monomials of smaller length. By assumptions on $\ui$ and $\uj$ there is an index $r\in [1,d]$ such that $i_r=i^1$ for some $i\in I_1$ and $w^{-1}(r+1)<w^{-1}(r)$. Thus $w$ has a reduced expression of the form $w=s_rs_{r_1}\cdots s_{r_h}$. This implies that $\tau_{p_1}\cdots\tau_{p_k}e(\uj)$ is equal to a monomial of the form $\tau_{r}\tau_{r_1}\cdots\tau_{r_h} e(\uj)$ modulo monomials of the form $\tau_{q_1}\cdots\tau_{q_t}x_1^{b_1}\cdots x_d^{b_d}e(\uj)$ with $t<k$, see Remark \ref{ch3:rk_basis-KLR}. As the sequence $s_r(\ui)$ is unordered, the case $k=1$ and the induction hypothesis imply the statement.
\end{proof}

\smallskip
\begin{lem}
\label{ch3:lem_KLR-pres-ideal}
Assume that $\ui,\uj\in \overline I^{\overline\alpha}_{\rm ord}$. Then each element of $e(\ui)R_{\overline\alpha,\bfk} e(\uj)$ maps $J(\uj)$ into $J(\ui)$.
\end{lem}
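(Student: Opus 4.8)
The plan is to reduce the statement to the basis description of Remark~\ref{ch3:rk_basis-KLR}, exactly as in the proof of Lemma~\ref{ch3:lem_unord-mapsto-ideal}, but now tracking the extra generators of $J(\uj)$ rather than just the polynomial ring. Since $e(\ui)R_{\overline\alpha,\bfk}e(\uj)$ is spanned by elements $\tau_w x_1^{a_1}\cdots x_d^{a_d}e(\uj)$ with $w\in\frakS_{\uj,\ui}$, and multiplication by the $x_r$'s preserves any ideal of $\bfk_d[x]e(\uj)$, it suffices to show that each $\tau_w e(\uj)$ sends each generator $(x_s-x_{s+1})e(\uj)$ with $j_s\in\overline I_1$ into $J(\ui)$. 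I would prove this by induction on $\ell(w)$, choosing reduced expressions adapted to the position~$s$.

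First I would handle the base case and the ``easy move'' $\tau_p$: if $j_p=j_{p+1}$, then $\tau_p$ acts by the Demazure operator $\partial_p$, which commutes with $s_p$ and satisfies the twisted Leibniz rule, so it sends $\bfk_d[x]e(\uj)$-multiples of $(x_s-x_{s+1})$ into the ideal generated by $(x_{s'}-x_{s'+1})$ for the appropriately permuted index $s'$; if $j_p\neq j_{p+1}$, then $\tau_p$ acts by $P_{j_p,j_{p+1}}(x_{p+1},x_p)s_p(-)e(s_p(\uj))$, and again $(x_s-x_{s+1})$ is carried to $\pm(x_{s'}-x_{s'+1})$ times a polynomial, with $s'$ the image of $s$ under $s_p$. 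The one subtlety is that the target sequence $s_p(\uj)$ might not be well-ordered; but if it is unordered, Lemma~\ref{ch3:lem_unord-mapsto-ideal} already tells us the remaining factor lands in $J(\ui)$ (even starting from all of $\bfk_d[x]$), so that case is absorbed. If $s_p(\uj)$ is well-ordered, the induced generator $(x_{s'}-x_{s'+1})e(s_p(\uj))$ is genuinely one of the generators of $J(s_p(\uj))$ because the ``paired'' structure of indices is preserved by the transposition $s_p$ — here one must check that $s_p$ cannot separate a pair $i^1,i^2$ without creating an unordered sequence, which is immediate from the definitions of well-ordered and unordered.

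For the inductive step I would argue as in Lemma~\ref{ch3:lem_unord-mapsto-ideal}: given $w$ with $w(\uj)=\ui$, pick an index at which $w$ can be written with a chosen simple reflection on the left (say $w=s_p w'$ with $\ell(w')<\ell(w)$ and $\uj'=w'(\uj)$), use Remark~\ref{ch3:rk_basis-KLR} to rewrite $\tau_w e(\uj)$ as $\tau_p\tau_{w'}e(\uj)$ modulo terms $\tau_{w''}x^{b}e(\uj)$ with $\ell(w'')<\ell(w)$ and $w''(\uj)$ still equal to $\ui$ (reduced expressions can be arranged as in the remark), and then apply the induction hypothesis to $\tau_{w'}$ (which lands in $J(\uj')$ if $\uj'$ is well-ordered, or in $J(\ui)$ directly via Lemma~\ref{ch3:lem_unord-mapsto-ideal} if $\uj'$ is unordered), followed by the base-case analysis of $\tau_p$ on $J(\uj')$. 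The lower-order correction terms are handled by the same induction hypothesis since they have strictly smaller length and still start from $\bfk_d[x]e(\uj)\supseteq J(\uj)$.

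The main obstacle I anticipate is bookkeeping the generators of $J$ through the transpositions: one needs that whenever a reduced word for $w$ acts and an intermediate sequence stays well-ordered, the set of ``pair-adjacent'' indices $\{s : j_s\in\overline I_1\}$ is transported correctly, so that the generator $(x_s - x_{s+1})$ of $J(\uj)$ maps into the $\bfk_d[x]$-span of the generators $(x_{s'}-x_{s'+1})$ of $J(\ui)$ and not merely into some larger ideal. Concretely one should note that in a well-ordered sequence the vertices of $\overline I_1$ sit in positions immediately followed by their $\overline I_2$-partner, and any transposition either fixes such a block, swaps two disjoint blocks/letters, or breaks a block — and breaking a block forces passage through an unordered sequence, where Lemma~\ref{ch3:lem_unord-mapsto-ideal} takes over. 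Once this combinatorial invariant is isolated, the rest is the routine length induction already rehearsed in Lemma~\ref{ch3:lem_unord-mapsto-ideal}.
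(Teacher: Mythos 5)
Your approach is genuinely different from the paper's. The paper's proof is a one-liner: by Remark~\ref{ch3:rem_xa=xa+1}, $(x_r-x_{r+1})fe(\uj)=-\tau_r^2(fe(\uj))$ in the polynomial representation, so for $y\in e(\ui)R_{\overline\alpha,\bfk}e(\uj)$ one has $y((x_r-x_{r+1})fe(\uj))=-\bigl(y\tau_re(s_r(\uj))\bigr)\bigl(\tau_r(fe(\uj))\bigr)$; since $s_r(\uj)$ is unordered, Lemma~\ref{ch3:lem_unord-mapsto-ideal} applied to $y\tau_re(s_r(\uj))$ immediately gives the result. Absorbing the ideal generator into the element $\tau_r^2$ reduces the claim to the preceding lemma in a single step and bypasses the length induction and the bookkeeping you anticipate.

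Your length induction, as written, has a concrete gap at exactly the obstacle you flagged. The two-case split (``$\uj'=w'(\uj)$ well-ordered, apply the induction hypothesis'' versus ``$\uj'$ unordered, apply Lemma~\ref{ch3:lem_unord-mapsto-ideal}'') is not exhaustive: a sequence can be neither. Take $\uj=(j^0,i^1,i^2)$ and $\ui=(i^1,i^2,j^0)$ with $j\in I_0$, $i\in I_1$; both are well-ordered, the permutation $w$ with $w(\uj)=\ui$ has the unique reduced expression $w=s_2s_1$, and the intermediate sequence $\uj'=s_1(\uj)=(i^1,j^0,i^2)$ is neither well-ordered (position $1$ carries $i^1$ but position $2$ carries $j^0\neq i^2$) nor unordered (no prefix has more $\overline I_2$-letters than $\overline I_1$-letters), so neither branch of your argument applies to $\tau_{w'}e(\uj)$. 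Relatedly, your base-case claim that $(x_s-x_{s+1})$ is carried to $\pm(x_{s'}-x_{s'+1})$ with $s'=s_p(s)$ fails when $p\in\{s-1,s+1\}$: here $s_1$ sends $x_2-x_3$ to $x_1-x_3$, which is not a multiple of any $(x_{s'}-x_{s'+1})$ at the intermediate stage; and the proposed resolution, that ``$s_p$ cannot separate a pair $i^1,i^2$ without creating an unordered sequence,'' is false precisely when $s_p$ slides a third letter into the gap, which is what $s_1$ does in the example. The conclusion is still true (after the next $\tau_2$, $x_1-x_3$ becomes $x_1-x_2$ and does land in $J(\ui)$), but reaching it this way requires tracking an invariant finer than membership in $J(\uj')$ through the intermediate non-well-ordered stages. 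The paper's $\tau_r^2$ trick sidesteps all of this.
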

\begin{proof}[Proof]
Take $y\in e(\ui)R_{\overline\alpha,\bfk} e(\uj)$. We must prove that for each $r\in[1,d]$ such that $j_r=i^1$ for some $i\in I_1$ and each $f\in\bfk_{d}[x]$ we have $y((x_{r}-x_{r+1})fe(\uj))\in J(\ui)$. We have $(x_{r}-x_{r+1})fe(\uj)=-\tau_{r}^2(fe(\ui))$ (see Remark \ref{ch3:rem_xa=xa+1}). This implies
$$
y((x_{r}-x_{r+1})fe(\uj))=-y\tau_r^2(fe(\uj))=-y\tau_re(s_r(\uj))(\tau_r(fe(\uj))).
$$
Thus Lemma \ref{ch3:lem_unord-mapsto-ideal} implies the statement because the sequence $s_r(\uj)$ is unordered.
\end{proof}

The representation of $R_{\overline\alpha,\bfk}$ on
$$
\bfk_{\overline\alpha}^{(\overline I)}:=\bigoplus_{\ui\in \overline I^{\overline\alpha}}\bfk_{|\overline\alpha|}[x]e(\ui)
$$
yields a representation of $\bfe R_{\overline\alpha,\bfk}\bfe$ on
$$
\bfk_{\overline\alpha,{\mathrm{ord}}}^{(\overline I)}:=\bigoplus_{\ui\in \overline I^{\overline\alpha}_{\rm ord}}\bfk_{|\overline\alpha|}[x]e(\ui).
$$

Set $J_{\overline\alpha,\mathrm{ord}}=\bigoplus_{\ui\in\overline I^{\overline\alpha}_{\rm ord}}J(\ui)$. From Lemmas \ref{ch3:lem_unord-mapsto-ideal} and \ref{ch3:lem_KLR-pres-ideal} we deduce the following.

\smallskip
\begin{lem}
\label{ch3:lem_pol-rep-of-S-defined+faith}
The representation of $R_{\overline\alpha,\bfk}$ on $\bfk_{\overline\alpha}^{(\overline I)}$ factors through a representation of $S_{\overline\alpha,\bfk}$ on $\bfk_{\overline\alpha,{\mathrm{ord}}}^{(\overline I)}/J_{\overline\alpha,\rm ord}$. This representation is faithful.
\end{lem}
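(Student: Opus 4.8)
The plan is to establish the two assertions of Lemma~\ref{ch3:lem_pol-rep-of-S-defined+faith} in turn: first that the $R_{\overline\alpha,\bfk}$-action descends to $S_{\overline\alpha,\bfk}$ on the quotient $\bfk_{\overline\alpha,\mathrm{ord}}^{(\overline I)}/J_{\overline\alpha,\mathrm{ord}}$, and second that the resulting $S_{\overline\alpha,\bfk}$-representation is faithful.

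For the first part, recall that $S_{\overline\alpha,\bfk}=\bfe R_{\overline\alpha,\bfk}\bfe/N$ where $N=\sum_{\uj\in\overline I^{\overline\alpha}_{\rm un}}\bfe R_{\overline\alpha,\bfk}e(\uj)R_{\overline\alpha,\bfk}\bfe$. We already know that $\bfe R_{\overline\alpha,\bfk}\bfe$ acts on $\bfk_{\overline\alpha,\mathrm{ord}}^{(\overline I)}$. First I would check that $J_{\overline\alpha,\mathrm{ord}}$ is a subrepresentation: this is exactly the content of Lemma~\ref{ch3:lem_KLR-pres-ideal}, since any $\bfe y\bfe$ with $y\in R_{\overline\alpha,\bfk}$ decomposes into components $e(\ui)ye(\uj)$ with $\ui,\uj\in\overline I^{\overline\alpha}_{\rm ord}$, each of which sends $J(\uj)$ into $J(\ui)$. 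Hence the $\bfe R_{\overline\alpha,\bfk}\bfe$-action passes to the quotient $\bfk_{\overline\alpha,\mathrm{ord}}^{(\overline I)}/J_{\overline\alpha,\mathrm{ord}}$. Next I would show that $N$ annihilates this quotient. A typical generator of $N$ has the form $\bfe a e(\uj) b \bfe$ with $\uj\in\overline I^{\overline\alpha}_{\rm un}$ and $a,b\in R_{\overline\alpha,\bfk}$; decomposing into idempotent components reduces this to showing that $e(\ui)ae(\uj)\cdot e(\uj)be(\uk)$ kills $\bfk_d[x]e(\uk)$ modulo $J(\ui)$ for $\ui,\uk\in\overline I^{\overline\alpha}_{\rm ord}$. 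Now $e(\uj)be(\uk)$ sends $\bfk_d[x]e(\uk)$ into $\bfk_d[x]e(\uj)$ (the full polynomial part), and then $e(\ui)ae(\uj)$ sends $\bfk_d[x]e(\uj)$ into $J(\ui)$ by Lemma~\ref{ch3:lem_unord-mapsto-ideal}. So the composite lands in $J(\ui)$, i.e. is zero in the quotient. This gives a well-defined $S_{\overline\alpha,\bfk}$-action.

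For faithfulness, suppose $\bar s\in S_{\overline\alpha,\bfk}$ acts as zero on $\bfk_{\overline\alpha,\mathrm{ord}}^{(\overline I)}/J_{\overline\alpha,\mathrm{ord}}$; lift it to $s=\bfe s\bfe\in\bfe R_{\overline\alpha,\bfk}\bfe$. I want to conclude $s\in N$. The idea is to use the explicit basis of $R_{\overline\alpha,\bfk}$ from Remark~\ref{ch3:rk_basis-KLR}: write $s=\sum_{\ui,\uj\in\overline I^{\overline\alpha}_{\rm ord}}e(\ui)se(\uj)$ and expand each component in the basis $\{\tau_w x^{\bfa}e(\uj)\}$. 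The key observation is that, working modulo $N$, any basis element $\tau_w x^{\bfa}e(\uj)$ whose reduced word for $w$ must pass through an unordered sequence already lies in $N$ — more precisely, if $w(\uj)$ is unordered for some, equivalently any suitable choice, then $\tau_w e(\uj)$ factors through $e(\text{unordered})$, so it is in $N$. Thus modulo $N$ only those basis elements survive for which $w$ permutes $\uj$ to $\ui$ through ordered sequences, and for the well-ordered sequences the only such $w$ are products of transpositions $s_a$ with $j_a\in\overline I_1$ (swapping the pair $i^1,i^2$), together with permutations among blocks. Using Remark~\ref{ch3:rem_xa=xa+1} these $\tau_a$-factors act, modulo $J$, by the polynomial part $\pm(x_{a}-x_{a+1})$-type corrections that vanish in the quotient ideal, so in $S_{\overline\alpha,\bfk}$ the surviving part of $s$ is represented by a sum $\sum_{\ui}\tau_{w_\ui}x^{\bfa_\ui}e(\ui)$ where each surviving $\tau_{w}$ acts on $\bfk_d[x]e(\uj)/J(\uj)$ essentially by a Demazure-type operator or an honest permutation of a reduced polynomial algebra. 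The action on the leading term $x^{\bfa}$ then detects the coefficient, exactly as in the proof of faithfulness of the polynomial representation of an ordinary KLR algebra (Proposition~\ref{ch3:prop_faith-rep-KLR}); so vanishing of the action forces all coefficients to be zero, i.e. $s\in N$.

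The main obstacle is the faithfulness argument, specifically the bookkeeping that identifies precisely which basis monomials of $\bfe R_{\overline\alpha,\bfk}\bfe$ remain nonzero in $S_{\overline\alpha,\bfk}$ and then showing that these act by linearly independent operators on the reduced polynomial ring $\bfk_{\overline\alpha,\mathrm{ord}}^{(\overline I)}/J_{\overline\alpha,\mathrm{ord}}$. This requires: (i) a clean description of the quotient ring $\bfk_d[x]e(\ui)/J(\ui)$ — it is a polynomial ring in $d-\#\{a:i_a\in\overline I_1\}$ variables, obtained by identifying $x_a$ with $x_{a+1}$ whenever $i_a\in\overline I_1$; (ii) the fact that a "through-unordered" $\tau_w$ lies in $N$, which follows from the basis straightening in Remark~\ref{ch3:rk_basis-KLR} combined with Lemma~\ref{ch3:lem_unord-mapsto-ideal}; and (iii) a leading-term / degree argument on the polynomial action to separate the remaining basis elements. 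Steps (i) and (iii) are routine once set up; step (ii), the identification of $N$ with the span of through-unordered monomials modulo the polynomial corrections, is where the real care is needed and is the technical heart of the proof.
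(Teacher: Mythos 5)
Your first part (the action descends from $\bfe R_{\overline\alpha,\bfk}\bfe$ to $S_{\overline\alpha,\bfk}$) is correct and is exactly what the paper intends: $J_{\overline\alpha,\mathrm{ord}}$ is preserved by Lemma~\ref{ch3:lem_KLR-pres-ideal}, and the two-sided ideal generated by unordered idempotents acts by zero modulo $J_{\overline\alpha,\mathrm{ord}}$ by Lemma~\ref{ch3:lem_unord-mapsto-ideal}; the paper treats this as immediate (``From Lemmas \ldots\ we deduce the following'').

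For the faithfulness claim, however, there is a genuine gap, and it is precisely where you yourself flag the ``real care'' as needed. The paper does not prove faithfulness inside this lemma at all: it defers it to the proof of Theorem~\ref{ch3:thm_KLR-e-e+1}, where the key combinatorial notion is that of a \emph{balanced} permutation $w\in\frakS_{\ui',\uj'}$ (one with $w(a+1)=w(a)+1$ whenever $i'_a\in\overline I_1$). The paper then shows: (1) if $w$ is unbalanced, some reduced word for $w$ ends in an $s_a$ with $i'_a\in\overline I_1$, so $\tau_w e(\ui')$ factors through the unordered idempotent $e(s_a(\ui'))$ and is therefore $0$ in $S_{\overline\alpha,\bfk}$; (2) the balanced $w$ are in bijection (via $u$) with $\frakS_{\ui,\uj}$, and with a carefully constructed compatible choice of reduced expressions one gets $\tau_w e(\ui')=\pm(\tau_{\widetilde w}e(\ui))^*$, so the image of the KLR basis of $R_{\alpha,\bfk}$ spans $S_{\overline\alpha,\bfk}$ and acts by linearly independent operators (by faithfulness of $\bfk^{(I)}_\alpha$).

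Your sketch of this step misidentifies the surviving monomials. You write that the relevant $w$ are ``products of transpositions $s_a$ with $j_a\in\overline I_1$ (swapping the pair $i^1,i^2$), together with permutations among blocks,'' and that the $\tau_a$-factors ``act, modulo $J$, by $\pm(x_a-x_{a+1})$-type corrections that vanish.'' But an in-block swap $s_a$ with $j_a\in\overline I_1$ takes a well-ordered sequence to an unordered one, so any $\tau_w$ that factors through such an $s_a$ is \emph{zero} in $S_{\overline\alpha,\bfk}$ (it is not a surviving term with a vanishing polynomial correction; the whole monomial dies because it factors through an unordered idempotent). Consequently the only surviving $w$ are precisely the balanced ones (those that never break a block $(i^1,i^2)$), and these are in bijection with $\frakS_{\ui,\uj}$. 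Also be careful with the phrase ``permutes $\uj$ to $\ui$ through ordered sequences'': this is not well-defined without specifying a reduced word, and ``ordered'' (well-ordered) and ``unordered'' are not complements; the correct criterion is exactly balancedness. Without the balanced-permutation bookkeeping and the compatible choice of reduced words (which is nontrivial: one must check the doubled expression $\hat s_{p_1}\cdots\hat s_{p_k}$ is reduced), the argument that $\{b^*\}$ is a basis of $e(\uj')S_{\overline\alpha,\bfk}e(\ui')$ does not go through. So the strategy you propose is essentially the paper's, but your description of the combinatorial heart is incorrect and the proof as written would not close.
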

\begin{proof}[Proof]
The faithfulness is proved in the proof of Theorem \ref{ch3:thm_KLR-e-e+1}.
\end{proof}

\subsection{The comparison of the polynomial representations}
\label{ch3:subs_comp-pol-reps}
Fix $\alpha\in Q^+_I$. Set $d=|\alpha|$ and $\overline d=|\overline\alpha|$. For each sequence $\ui=(i_1,\cdots,i_d)\in I^\alpha$ and $r\in[1,d]$ we denote by $r'$ or $r'_\ui$ the positive integer such that $r'-1$ is the length of the sequence $\phi(i_1,\cdots,i_{r-1})\in \overline I^\infty$.

For $r\in[1,d]$ (resp. $r\in[1,d-1]$) consider the element $x^*_{r}\in S_{\overline\alpha,\bfk}$ (resp. $\tau^*_{r}\in S_{\overline\alpha,\bfk}$) such that for each $\ui\in I^\alpha$ we have
$$
x^*_re(\phi(\ui))= x_{r'}e(\phi(\ui)),\\
$$
$$
\tau^*_re(\phi(\ui))=
\left\{\begin{array}{ll}
\tau_{r'}e(\phi(\ui)),& \mbox{ if }i_r,i_{r+1}\in I_0,\\
\tau_{r'}\tau_{r'+1}e(\phi(\ui))& \mbox{ if }i_r\in I_1,i_{r+1}\in I_0,\\
\tau_{r'+1}\tau_{r'}e(\phi(\ui))& \mbox{ if }i_r\in I_0,i_{r+1}\in I_1,\\
\tau_{r'+1}\tau_{r'+2}\tau_{r'}\tau_{r'+1}e(\phi(\ui))& \mbox{ if }i_r,i_{r+1}\in I_1,i_r\ne i_{r+1},\\
-\tau_{r'+1}\tau_{r'+2}\tau_{r'}\tau_{r'+1}e(\phi(\ui))& \mbox{ if }i_r=i_{r+1}\in I_1.\\
\end{array}\right.
$$


For each $\ui\in I^\alpha$ we have the algebra isomorphism
$$
\bfk_d[x]e(\ui)\simeq \bfk_{\overline d}[x]e(\phi(\ui))/J(\phi(\ui)),\quad x_re(\ui)\mapsto x_{r'}e(\phi(\ui)).
$$
We will always identify $\bfk^{(I)}_\alpha$ with $\bfk^{(\overline I)}_{\overline \alpha,\mathrm{ord}}/J_{\overline\alpha,\mathrm{ord}}$ via this isomorphism.

\smallskip
\begin{lem}
\label{ch3:lem_comp-operators}
The action of the elements $e(\ui)$, $x_re(\ui)$ and $\tau_re(\ui)$ of $R_{\alpha,\bfk}$ on $\bfk^{(I)}_\alpha$ is the same as the action of the elements $e(\phi(\ui))$, $x^*_{r}e(\phi(\ui))$ and $\tau^*_{r}e(\phi(\ui))$ of $S_{\overline\alpha,\bfk}$ on $\bfk^{(\overline I)}_{\overline \alpha,\mathrm{ord}}/J_{\overline\alpha,\mathrm{ord}}$.
\end{lem}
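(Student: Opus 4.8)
The plan is to verify the three assertions separately, working inside the faithful polynomial representation of $R_{\overline\alpha,\bfk}$ on $\bfk^{(\overline I)}_{\overline\alpha}$ given by Proposition \ref{ch3:prop_faith-rep-KLR}, and then pushing everything down modulo $J_{\overline\alpha,\mathrm{ord}}$ as in Lemma \ref{ch3:lem_pol-rep-of-S-defined+faith}. The claim about $e(\ui)$ is immediate from the identification $\bfk^{(I)}_\alpha\simeq \bfk^{(\overline I)}_{\overline\alpha,\mathrm{ord}}/J_{\overline\alpha,\mathrm{ord}}$, since $e(\phi(\ui))$ is by construction the idempotent projecting to the summand indexed by $\phi(\ui)$. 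The claim about $x_re(\ui)$ is also essentially by definition of the identification: $x_r$ acts on $\bfk_d[x]e(\ui)$ by multiplication by $x_r$, and under $x_re(\ui)\mapsto x_{r'}e(\phi(\ui))$ this is exactly multiplication by $x_{r'}$, which is how $x^*_re(\phi(\ui))$ acts. One subtlety worth a sentence: when $i_r\in I_1$, the position $r'$ is the first of the pair $(r',r'+1)$ coming from $\phi(i_r)=(i_r^1,i_r^2)$, and in $S_{\overline\alpha,\bfk}$ one has $x_{r'}e(\phi(\ui))=x_{r'+1}e(\phi(\ui))$ by Remark \ref{ch3:rem_xa=xa+1}; so it does not matter which of the two we pick, and $J(\phi(\ui))$ is precisely the ideal making this identification consistent.

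The real content is the computation of the action of $\tau^*_re(\phi(\ui))$, which I would split into the five cases in the definition. In each case I would apply the explicit formula (\ref{ch3:eq_action-on-polyn}) repeatedly, using the specific choice $P_{i,j}(u,v)=(u-v)^{h_{j,i}}$ and the arrow structure of $\overline\Gamma$ from Section \ref{ch3:subs_not-quiv-I-Ibar}: recall that in $\overline\Gamma$ the only arrows touching $\overline I_1\sqcup\overline I_2$ are $i^1\to i^2$ together with the "halved" versions of arrows of $\Gamma$ incident to $I_1$, and crucially there are no arrows between vertices of $I_1$. I would compare the result with the action of $\tau_r e(\ui)$ on $\bfk^{(I)}_\alpha$, which by (\ref{ch3:eq_action-on-polyn}) is either the Demazure operator $\partial_r$ (if $i_r=i_{r+1}$) or $P_{i_r,i_{r+1}}(x_{r+1},x_r)s_r(-)$ composed with relabelling (if $i_r\ne i_{r+1}$). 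For the first case ($i_r,i_{r+1}\in I_0$) there is nothing to do beyond noting $r'_\ui$ and $r'_{s_r(\ui)}$ behave correctly. For the mixed cases ($i_r\in I_1,i_{r+1}\in I_0$ or vice versa) I would compute a product of two $\tau$'s acting on a polynomial in $\bfk_{\overline d}[x]$; the key point is that after reducing modulo $J(\phi(\ui))$, where $x_{r'}=x_{r'+1}$, the various factors $(x_a-x_b)^{h}$ produced by (\ref{ch3:eq_action-on-polyn}) collapse to exactly the factor $P_{i_r,i_{r+1}}$ or $P_{i_{r+1},i_r}$ demanded by the $R_{\alpha,\bfk}$-side, since halving an arrow of $\Gamma$ at an $I_1$-vertex splits $h_{i,j}$ across the two copies but the sum is what survives modulo the ideal. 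For the two cases with $i_r,i_{r+1}\in I_1$ I would expand the four-factor product $\tau_{r'+1}\tau_{r'+2}\tau_{r'}\tau_{r'+1}$ on the four-element block $\phi(i_r)\phi(i_{r+1})=(i_r^1,i_r^2,i_{r+1}^1,i_{r+1}^2)$, again reducing modulo $J$ at the end; the overall sign $-1$ in the case $i_r=i_{r+1}$ is there precisely to absorb the sign coming from a $\tau^2=(x_{a+1}-x_a)$-type relation that appears when $i_r^1=i_{r+1}^1$, mirroring the sign in Remark \ref{ch3:rem_xa=xa+1}.

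The main obstacle will be bookkeeping in the last two cases: one must track, for a polynomial $fe(\phi(\ui))$, how each of the four $\tau$'s permutes the relevant $x$-variables and what monomial prefactor $(x_a-x_b)^{h_{\bullet,\bullet}}$ it introduces, and then check that the product, \emph{after} projecting modulo $J(\phi(s_r\ui))$ on the target, equals the expected Demazure operator or twisted permutation on $\bfk^{(I)}_\alpha$. I expect this to reduce to a short identity of rational functions — the telescoping of several Demazure-type steps into a single one — which is clean but requires care about which variables are identified in the quotient. Once Lemma \ref{ch3:lem_comp-operators} is established, it is the computational heart behind Theorem \ref{ch3:thm-isom_KLR_e_e+1_intro}: it shows the assignment $e(\ui)\mapsto e(\phi(\ui))$, $x_r\mapsto x^*_r$, $\tau_r\mapsto\tau^*_r$ is compatible with the faithful polynomial representations on both sides, so it extends to a well-defined algebra homomorphism $R_{\alpha,\bfk}\to S_{\overline\alpha,\bfk}$, and faithfulness on both sides will give injectivity, with surjectivity handled separately via the basis description of Remark \ref{ch3:rk_basis-KLR}.
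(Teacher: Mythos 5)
Your plan is essentially the paper's proof: the paper likewise treats $e(\ui)$ and $x_re(\ui)$ as immediate, and for $\tau_re(\ui)$ works case by case using the factorization $P_{i^1,j^0}(u,v)P_{i^2,j^0}(u,v)=P_{i,j}(u,v)$ (equation (\ref{ch3:eq_P10})) for the mixed cases, the ``no arrows within $I_1$'' hypothesis for $i_r\ne i_{r+1}\in I_1$, and, for $i_r=i_{r+1}\in I_1$, an explicit Demazure computation checked on monomials $x_r^nx_{r+1}^m$. One small imprecision worth flagging: in that last case the block has labels $(i^1,i^2,i^1,i^2)$, so after the first $\tau_{r'+1}$ the inner crossings $\tau_{r'}$ and $\tau_{r'+2}$ act between \emph{like-labelled} strands and are Demazure operators, not squares of crossings; the sign $-1$ is needed to make $s_{r'+1}\partial_{r'+2}\partial_{r'}\bigl((x_{r'+1}-x_{r'+2})s_{r'+1}(\cdot)\bigr)$ match $\partial_r$ after collapsing $x_{r'}=x_{r'+1}$ and $x_{r'+2}=x_{r'+3}$ modulo $J$, rather than arising from a $\tau^2=(x_{a+1}-x_a)$-type relation as you suggest.
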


\begin{proof}[Proof]
The proof is based on the observation that by construction for each $i\in I_1$ and $j\in I_0$ we have
\begin{equation}
\label{ch3:eq_P10}
P_{i^1,j^0}(u,v)P_{i^2,j^0}(u,v)=P_{i,j}(u,v),
\end{equation}
$$
P_{j^0,i^1}(u,v)P_{j^0,i^2}(u,v)=P_{j,i}(u,v).
$$

For each $\ui\in I^\alpha$, we write $\phi(\ui)=(i'_1,i'_2,\cdots,i'_{\overline d})$.
The only difficult part concerns the operator $\tau_re(\ui)$ when at least one of the elements $i_r$ or $i_{r+1}$ is in $I_1$. 
Assume that $i_r\in I_1$ and $i_{r+1}\in I_0$. In this case we have
$$
i'_{r'}=(i_r)^1\in \overline I_1,\quad i'_{r'+1}=(i_r)^2\in \overline I_2, \quad i'_{r'+2}=(i_{r+1})^0\in \overline I_0.
$$
In particular, the element $i'_{r'+2}$ is different from $i'_{r'}$ and $i'_{r'+1}$. Then, by (\ref{ch3:eq_action-on-polyn}), for each $f\in\bfk_{\overline d}[x]$ the element $\tau^*_re(\phi(\ui))=\tau_{r'}\tau_{r'+1}e(\phi(\ui))$ maps $fe(\phi(\ui))\in \bfk^{(\overline I)}_{\overline \alpha,\mathrm{ord}}/J_{\overline\alpha,\mathrm{ord}}$ to
$$
\begin{array}{lll}
&&P_{i'_{r'},i'_{r'+2}}(x_{r'+1},x_{r'})s_{r'}\left(P_{i'_{r'+1},i'_{r'+2}}(x_{r'+2},x_{r'+1})s_{r'+1}(f)\right)e(s_{r'}s_{r'+1}(\phi(\ui)))\\
&=&P_{i'_{r'},i'_{r'+2}}(x_{r'+1},x_{r'})P_{i'_{r'+1},i'_{r'+2}}(x_{r'+2},x_{r'})s_{r'}s_{r'+1}(f)e(\phi(s_r(\ui)))\\
&=&P_{i_r,i_{r+1}}(x_{r'+1},x_{r'})s_{r'}s_{r'+1}(f)e(\phi(s_r(\ui))),
\end{array}
$$
where the last equality holds by (\ref{ch3:eq_P10}).
Thus we see that the action of $\tau^*_re(\phi(\ui))$ on the polynomial representation is the same as the action of $\tau_re(\ui)$. The case when $i_r\in I_0$ and $i_{r+1}\in I_1$ can be done similarly.

Assume now that $i_r\ne i_{r+1}$ are both in $I_1$. By the assumption on the quiver $\Gamma$ (see Section \ref{ch3:subs_not-quiv-I-Ibar}), there are no arrows in $\Gamma$ between $i_r$ and $i_{r+1}$. Thus there are no arrows in $\overline \Gamma$ between any of the vertices $(i_r)^1=i'_{r'}$ or $(i_{r})^2=i'_{r'+1}$ and any of the vertices $(i_{r+1})^1=i'_{r'+2}$ or $(i_{r+1})^2=i'_{r'+3}$. Then, by (\ref{ch3:eq_action-on-polyn}), for each $f\in\bfk_{\overline d}[x]$ the element $\tau^*_re(\ui)=\tau_{r'+1}\tau_{r'+2}\tau_{r'}\tau_{r'+1}e(\phi(\ui))$ maps $fe(\phi(\ui))$ to
$$
\begin{array}{lll}
s_{r'+1}s_{r'+2}s_{r'}s_{r'+1}(f)e(\phi(s_r(\ui))).
\end{array}
$$
Thus we see that the action of $\tau^*_re(\phi(\ui))$ on the polynomial representation is the same as that of $\tau_re(\ui)$.

Finally, assume that $i_r=i_{r+1}\in I_1$. In this case we have
$$
(i_{r})^1=i'_{r'}=(i_{r+1})^1=i'_{r'+2},\qquad  (i_{r})^2=i'_{r'+1}=(i_{r+1})^2=i'_{r'+3}.
$$
Then, by (\ref{ch3:eq_action-on-polyn}), for each $f\in\bfk_{\overline d}[x]$ the element $\tau^*_re(\phi(\ui))=-\tau_{r'+1}\tau_{r'+2}\tau_{r'}\tau_{r'+1}e(\phi(\ui))$ maps $fe(\phi(\ui))$ to
$$
\begin{array}{lll}
s_{r'+1}\partial_{r'+2}\partial_{r'}(x_{r'+1}-x_{r'+2})s_{r'+1}(f)e(\phi(s_r(\ui))),
\end{array}
$$
where $\partial_r$ is the Demazure operator (see the definition before Proposition \ref{ch3:prop_faith-rep-KLR}).
To prove that this gives the same result as for $\tau_re(\ui)$, it is enough to check this on monomials $x_{r}^nx_{r+1}^me(\ui)$. Assume for simplicity that $n\geqslant m$. The situation $n\leqslant m$ can be treated similarly. The element $\tau_re(\ui)$ maps this monomial to
$$
\partial_r(x^n_rx^m_{r+1})e(\ui)=-\sum_{a=m}^{n-1}x_{r}^ax_{r+1}^{n+m-1-a}e(\ui).
$$
Here the symbol $\sum\limits_{a=x}^y$ means $0$ when $y=x-1$.
The element $\tau^*_re(\phi(\ui))$ maps $x_{r'+1}^nx_{r'+2}^me(\phi(\ui))$ to $s_{r'+1}\partial_{r'+2}\partial_{r'}[x_{r'+1}^{m+1}x_{r'+2}^{n}-x_{r'+1}^{m}x_{r'+2}^{n+1}]e(\phi(\ui))$, which equals 
$$
\begin{array}{llll}
&&s_{r'+1}\left[-\left(\sum\limits_{a=0}^mx_{r'}^ax_{r'+1}^{m-a}\right)\left(\sum\limits_{b=0}^{n-1}x_{r'+2}^bx_{r'+3}^{n-1-b}\right)+\left(\sum\limits_{a=0}^{m-1}x_{r'}^ax_{r'+1}^{m-1-a}\right)\left(\sum\limits_{b=0}^{n}x_{r'+2}^bx_{r'+3}^{n-b}\right)\right]e(\phi(\ui))\\
&=&\left[-\left(\sum\limits_{a=0}^mx_{r'}^ax_{r'+2}^{m-a}\right)\left(\sum\limits_{b=0}^{n-1}x_{r'+1}^bx_{r'+3}^{n-1-b}\right)+\left(\sum\limits_{a=0}^{m-1}x_{r'}^ax_{r'+2}^{m-1-a}\right)\left(\sum\limits_{b=0}^{n}x_{r'+1}^bx_{r'+3}^{n-b}\right)\right]e(\phi(\ui))\\
&=&\left[-x_{r'}^m\left(\sum\limits_{b=0}^{n-1}x_{r'+1}^bx_{r'+3}^{n-1-b}\right)+x_{r'+1}^n\left(\sum\limits_{a=0}^{m-1}x_{r'}^ax_{r'+2}^{m-1-a}\right)\right]e(\phi(\ui))\\
&=&\left[-x_{r'+1}^m\left(\sum\limits_{b=0}^{n-1}x_{r'+1}^bx_{r'+2}^{n-1-b}\right)+x_{r'+1}^n\left(\sum\limits_{a=0}^{m-1}x_{r'+1}^ax_{r'+2}^{m-1-a}\right)\right]e(\phi(\ui))\\
&=&-\left(\sum\limits_{a=m}^{n-1}x_{r'+1}^ax_{r'+2}^{m+n-1-a}\right)e(\phi(\ui)).
\end{array}
$$

Here the first equality follows from the following property of the Demazure operator
$$
\partial_r(x_{r+1}^n)=-\partial_r(x_{r}^n)=\sum_{a=0}^{n-1}x_r^ax_{r+1}^{n-1-a},
$$
the fourth equality follows from Remark \ref{ch3:rem_xa=xa+1}. Other equalities are obtained by elementary manipulations with sums.
\end{proof}

\subsection{Isomorphism $\Phi$}
\label{ch3:subs-isom_Phi}

\smallskip
\begin{thm}
\label{ch3:thm_KLR-e-e+1}
For each $\alpha\in Q_I^+$, there is an algebra isomorphism $\Phi_{\alpha,\bfk}\colon R_{\alpha,\bfk}\to S_{\overline\alpha,\bfk}$ such that
$$
e(\ui)\mapsto e(\phi(\ui)),\\
$$
$$
x_re(\ui)\mapsto x^*_{r}e(\phi(\ui)),\\
$$
$$
\tau_re(\ui)\mapsto \tau^*_re(\phi(\ui)).
$$
\end{thm}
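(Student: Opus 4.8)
The strategy is to run everything through the polynomial representations, establishing at one stroke the faithfulness statement left open in Lemma~\ref{ch3:lem_pol-rep-of-S-defined+faith} and the fact that $\Phi_{\alpha,\bfk}$ is an isomorphism. Write $\pi$ for the faithful representation of $R_{\alpha,\bfk}$ on $\bfk^{(I)}_\alpha$ of Proposition~\ref{ch3:prop_faith-rep-KLR}, and $\rho$ for the representation of $S_{\overline\alpha,\bfk}$ on $\bfk^{(\overline I)}_{\overline\alpha,\mathrm{ord}}/J_{\overline\alpha,\mathrm{ord}}\simeq\bfk^{(I)}_\alpha$ provided by Lemma~\ref{ch3:lem_pol-rep-of-S-defined+faith} (whose existence, though not yet its faithfulness, is available). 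Lemma~\ref{ch3:lem_comp-operators} says precisely that $\rho$ carries $e(\phi(\ui))$, $x^*_re(\phi(\ui))$, $\tau^*_re(\phi(\ui))$ to $\pi(e(\ui))$, $\pi(x_re(\ui))$, $\pi(\tau_re(\ui))$; hence, fixing once and for all a reduced expression $w=s_{p_1}\cdots s_{p_\ell}$ in $\frakS_d$ for each $w$ and setting $\tau^*_w=\tau^*_{p_1}\cdots\tau^*_{p_\ell}$ and $(x^*)^a=(x^*_1)^{a_1}\cdots(x^*_d)^{a_d}$, the element $\tau^*_w(x^*)^ae(\phi(\ui))$ of $S_{\overline\alpha,\bfk}$ is sent by $\rho$ to $\pi(\tau_wx^ae(\ui))$.

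The first and main step is a spanning statement: the elements $\tau^*_w(x^*)^ae(\phi(\ui))$ ($\ui\in I^\alpha$, $w\in\frakS_d$, $a\in\bbN^d$) span $S_{\overline\alpha,\bfk}$ over $\bfk$. One starts from the basis $\{\tau_vx^be(\uj)\}$ of $\bfe R_{\overline\alpha,\bfk}\bfe$ provided by Remark~\ref{ch3:rk_basis-KLR} (indexed by well-ordered $\uj,\uj'$, by $v\in\frakS_{\uj,\uj'}$ with its fixed reduced expression, and by $b\in\bbN^{\overline d}$) and pushes it down to $S_{\overline\alpha,\bfk}$, arguing by induction on $\ell(v)$. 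The exponent part is harmless: by Remark~\ref{ch3:rem_xa=xa+1}, $x_ce(\uj)=x_{c+1}e(\uj)$ in $S_{\overline\alpha,\bfk}$ whenever $j_c\in\overline I_1$, so $x^be(\uj)$ may be turned into a polynomial in the $x^*$'s. For $\tau_ve(\uj)$ the point is that $v$ must respect the doublet pattern $i^1i^2$ carried by the well-ordered sequences $\uj,\uj'$ (recall that well-ordered sequences in $\overline I^{\overline\alpha}$ are exactly the $\phi(\ui)$): if $v$ does not respect it — for instance if $v$ has a descent exchanging the two legs of a doublet — then rewriting $\tau_ve(\uj)$ through a suitable reduced expression and the KLR relations of $R_{\overline\alpha,\bfk}$ produces, modulo monomials of strictly smaller length, an element factoring through an idempotent of $R_{\overline\alpha,\bfk}$ attached to an unordered sequence, which therefore vanishes in $S_{\overline\alpha,\bfk}$ (this is the mechanism of Lemma~\ref{ch3:lem_unord-mapsto-ideal}); and if $v$ does respect the pattern, then $\tau_ve(\uj)$ equals $\pm\tau^*_we(\phi(\ui))$ for the induced $w$ and $\ui=\phi^{-1}(\uj)$, again modulo smaller-length monomials. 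In either case the inductive hypothesis places $\tau_vx^be(\uj)$ in the span of the $\tau^*_w(x^*)^ae(\phi(\ui))$. This combinatorial reduction, with its careful handling of reduced expressions and lower-order terms, is the main obstacle of the proof.

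Granting the spanning statement, faithfulness of $\rho$ is immediate. Indeed $\{\tau_wx^ae(\ui)\}_{\ui\in I^\alpha,\,w\in\frakS_d,\,a\in\bbN^d}$ is a basis of $R_{\alpha,\bfk}$ (Remark~\ref{ch3:rk_basis-KLR} applied to $\Gamma$) and $\pi$ is faithful, so the operators $\pi(\tau_wx^ae(\ui))$ are linearly independent; since $\rho$ maps the spanning set $\{\tau^*_w(x^*)^ae(\phi(\ui))\}$ onto these operators, that set must be a basis of $S_{\overline\alpha,\bfk}$ and $\rho$ must be injective. This settles the faithfulness asserted in Lemma~\ref{ch3:lem_pol-rep-of-S-defined+faith}. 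Consequently $\Phi_{\alpha,\bfk}$ is well defined: after applying the now-faithful $\rho$, the elements $e(\phi(\ui))$, $x^*_re(\phi(\ui))$, $\tau^*_re(\phi(\ui))$ act on $\bfk^{(I)}_\alpha$ exactly as the generators of $R_{\alpha,\bfk}$ do under $\pi$, so they satisfy the defining relations of $R_{\alpha,\bfk}$, and by faithfulness of $\rho$ these relations already hold in $S_{\overline\alpha,\bfk}$; hence $\Phi_{\alpha,\bfk}$ extends to an algebra homomorphism.

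Finally, $\Phi_{\alpha,\bfk}$ is surjective, since its image contains $\{\Phi_{\alpha,\bfk}(\tau_wx^ae(\ui))\}=\{\tau^*_w(x^*)^ae(\phi(\ui))\}$, a basis of $S_{\overline\alpha,\bfk}$ by the previous step; and it is injective, since $\rho\circ\Phi_{\alpha,\bfk}=\pi$ — both are algebra homomorphisms $R_{\alpha,\bfk}\to\End(\bfk^{(I)}_\alpha)$ and they agree on the generators $e(\ui)$, $x_re(\ui)$, $\tau_re(\ui)$ by Lemma~\ref{ch3:lem_comp-operators} — while $\pi$ is faithful. Hence $\Phi_{\alpha,\bfk}$ is an algebra isomorphism.
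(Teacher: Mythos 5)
Your proposal follows the same route as the paper's proof: work in the faithful polynomial representations, use Lemma~\ref{ch3:lem_comp-operators} to match operators, push the basis of Remark~\ref{ch3:rk_basis-KLR} from $\bfe R_{\overline\alpha,\bfk}\bfe$ down to $S_{\overline\alpha,\bfk}$ by sorting permutations into those that respect the doubled-vertex pattern (the paper's ``balanced'' ones) and those that do not, and then deduce generation, faithfulness of $\rho$, and the isomorphism all at once. The one place where you only assert rather than argue is exactly where the paper spends most of its effort: for a balanced $v$ one must produce a \emph{reduced} expression of $v$ whose associated product of $\tau$'s literally is $\pm\tau^*_{\widetilde w}e(\phi(\ui))$ (the paper's $\hat s_a$-construction together with the bullet-point verification of reducedness); without that, the claim ``$\tau_ve(\uj)=\pm\tau^*_we(\phi(\ui))$ modulo shorter terms'' does not automatically hold, since a non-reduced replacement would produce only lower-length monomials and the length-$\ell(v)$ leading term would be lost. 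You flag this as the main obstacle, which is accurate, but it is the content that would need to be supplied to make the spanning step complete.
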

\begin{proof}[Proof]
By Proposition \ref{ch3:prop_faith-rep-KLR}, the representation $\bfk_\alpha^{(I)}$ of $R_{\alpha,\bfk}$ is faithful. Now, in view of Lemma \ref{ch3:lem_comp-operators}, it is enough to prove the following two facts:
\begin{itemize}
\item[\textbullet] the elements $e(\phi(\ui))$, $x^*_r$, $\tau^*_r$ generate $S_{\overline\alpha,\bfk}$,
\item[\textbullet] the representation $\bfk^{(\overline I)}_{\overline \alpha,\mathrm{ord}}/J_{\overline\alpha,\mathrm{ord}}$ of $S_{\overline\alpha,\bfk}$ is faithful.
\end{itemize}

Fix $\ui,\uj\in I^\alpha$.
Set $\ui'=(i'_1,\cdots,i'_{\overline d})=\phi(\ui)$, $\uj'=\phi(\uj)$. Let $\bfB$ and $\bfB'$ be the bases of $e(\uj)R_{\alpha,\bfk}e(\ui)$ and $e(\uj')R_{\overline\alpha,\bfk}e(\ui')$, respectively, as in Remark \ref{ch3:rk_basis-KLR}. These bases depend on some choices of reduced expressions. We will make some special choices later.
For each element $b=\tau_{w}x_1^{a_1}\cdots x_d^{a_d}e(\ui)\in \bfB$ we construct an element $b^*\in e(\uj')S_{\overline\alpha,\bfk}e(\ui')$ that acts by the same operator on the polynomial representation. We set
$$
b^*=\tau_{p_1}^*\cdots\tau_{p_k}^*(x_1^*)^{a_1}\cdots(x_{d}^*)^{a_{d}}e(\ui')\in e(\uj')S_{\overline\alpha,\bfk}e(\ui'),
$$
where $w=s_{p_1}\cdots s_{p_k}$ is a reduced expression (as we said above, some special choice of reduced expressions will be fixed later).

Let us call the permutation $w\in\frakS_{\ui',\uj'}$ \emph{balanced} if we have $w(a+1)=w(a)+1$ for each $a$ such that $i'_a=i^1$ for some $i\in I$ (and thus $i'_{a+1}=i^2$). Otherwise we say that $w$ is \emph{unbalanced}. There exists a unique map $u\colon \frakS_{\ui,\uj}\to \frakS_{\ui',\uj'}$ such that for each $w\in \frakS_{\ui,\uj}$ the permutation $u(w)$ is balanced and $w(r)<w(t)$ if and only if $u(w)(r')<u(w)(t')$ for each $r,t\in[1,d]$, where $r'=r'_\ui$ and $t'=t'_\ui$ are as in Section \ref{ch3:subs_comp-pol-reps}. The image of $u$ is exactly the set of all balanced permutations in $\frakS_{\ui',\uj'}$.

Assume that $w\in\frakS_{\ui',\uj'}$ is unbalanced. We claim that there exists an index $a$ such that $i'_a\in \overline I_1$ and $w(a)>w(a+1)$. Indeed, let $J$ be the set of indices $a\in[1,\overline d]$ such that $i'_a\in \overline I_1$. As $\uj'$ is well-ordered, we have $\sum_{a\in J}(w(a+1)-w(a))=\#J$. As $w$ is unbalanced, not all summands in this sum are equal to $1$. Then one of the summands must be negative. Let $a\in J$ be an index such that $w(a)>w(a+1)$. We can assume that the reduced expression of $w$ is of the form $w=s_{p_1}\cdots s_{p_k}s_{a}$. In this case the element $\tau_we(\ui')$ is zero in $S_{\overline\alpha,\bfk}$ because the sequence $s_a(\ui')$ is unordered.

Assume that $w\in\frakS_{\ui',\uj'}$ is balanced. Thus, there exists some $\widetilde w\in \frakS_{\ui,\uj}$ such that $u(\widetilde w)=w$. We choose an arbitrary reduced expression  $\widetilde w=s_{p_1}\cdots s_{p_k}$ and we choose the reduced expression $w=s_{q_1}\cdots s_{q_r}$ of $w$ obtained from the reduced expression of $\widetilde w$ in the following way. For $t\in \{1,\cdots,k\}$ set $\ui^t=s_{p_{t+1}}\cdots s_{p_k}(\ui)$ (in particular, we have $\ui^k=\ui$). We write $\ui^t=(i^t_1,\cdots,i^t_d)$. We construct the reduced expression of $w$ as $w=\hat s_{p_1}\cdots \hat s_{p_k}$, where for $a=p_t$ we have 
$$
\hat s_{a}=
\left\{
\begin{array}{lll}
s_{a'} &\mbox{ if } i^t_a,i^t_{a+1}\in I_0,\\
s_{a'+1}s_{a'} &\mbox{ if } i^t_a\in I_0,i^t_{a+1}\in I_1,\\
s_{a'}s_{a'+1}&\mbox{ if } i^t_a\in I_1,i^t_{a+1}\in I_0,\\
s_{a'+1}s_{a'}s_{a'+2}s_{a'+1}&\mbox{ if } i^t_a,i^t_{a+1}\in I_1,
\end{array}
\right.
$$
where $a'=a'_{\ui^r}$ is as in Section \ref{ch3:subs_comp-pol-reps}.
Let us explain why the obtained expression of $w$ is reduced. The fact that the expression $\widetilde w=s_{p_1}\cdots s_{p_k}$ is reduced means the following. When we apply the transpositions $s_{p_{k}}$, $s_{p_{k-1}}$, $\cdots$,$s_{p_1}$ consecutively to the $d$-tuple $(1,2,\cdots,d)$, if two elements of the set $\{1,2,\cdots,d\}$ are exchanged once by some $s$, then these two elements are never exchanged again by another $s$ later. It is clear that the expression $w=s_{q_1}\cdots s_{q_r}=\hat s_{p_1}\cdots \hat s_{p_k}$ inherits the same property from $\widetilde w=s_{p_1}\cdots s_{p_k}$ because for each $a,b\in \{1,2,\cdots,d\}$, $a\ne b$ we have the following (we set $a'=a'_\ui$, $b'=b'_\ui$).
\begin{itemize}
\item If $i_a,i_b\in I_0$, then if the reduced expression of $\widetilde w$ exchanges $a$ and $b$ exactly once or never exchanges them then the expression of $w$ exchanges $a'$ and $b'$ exactly once or never exchanges them, respectively.

\item If $i_a\in I_0$ and $i_b\in I_1$, then if the reduced expression of $\widetilde w$ exchanges $a$ and $b$ exactly once or never exchanges them then the expression of $w$ exchanges $a'$ and $b'$ exactly once or never exchanges them, respectively, and it also exchanges $a'$ with $b'+1$ exactly once or, respectively, never exchanges them.

\item If $i_a\in I_1$ and $i_b\in I_0$, then if the reduced expression of $\widetilde w$ exchanges $a$ and $b$ exactly once or never exchanges them then the expression of $w$ exchanges $a'$ and $b'$ exactly once or never exchanges them, respectively, and it also exchanges $a'+1$ with $b'$ exactly once or, respectively, never exchanges them.

\item If $i_a,i_b\in I_1$, then if the reduced expression of $\widetilde w$ exchanges $a$ and $b$ exactly once or never exchanges them then the expression of $w$ exchanges $a'$ and $b'$ exactly once or never exchanges them, respectively, and the same thing for $a'$ and $b'+1$, for $a'+1$ and $b'$, and for $a'+1$ and $b'+1$.
\end{itemize}
If the reduced expressions are chosen as above, then the element $\tau_we(\ui')=\tau_{q_1}\cdots\tau_{q_r}e(\ui')\in S_{\alpha,\bfk}$ is equal to $\pm (\tau_{p_1}\cdots \tau_{p_k}e(\ui))^*$.

The discussion above shows that the image of an element $b'\in\bfB'$ in $e(\uj')S_{\overline\alpha,\bfk}e(\ui')$ is either zero or of the form $\pm b^*$ for some $b\in\bfB$. Moreover, each $b^*$ for $b\in\bfB$ can be obtained in  such a way.
Now we get the following.
\begin{itemize}
\item[\textbullet] The elements $e(\phi(\ui))$, $x^*_r$ and $\tau^*_r$ generate $S_{\overline\alpha,\bfk}$ because the image of each element of $\bfB'$ in $e(\uj')S_{\overline\alpha,\bfk}e(\ui')$ is either zero or a monomial in $e(\phi(\ui))$, $x^*_r$, $\tau^*_r$.
\item[\textbullet] The representation $\bfk^{(\overline I)}_{\overline \alpha,\mathrm{ord}}/J_{\overline\alpha,\mathrm{ord}}$ of $S_{\overline\alpha,\bfk}$ is faithful because the spanning set $\{b^*;~b\in\bfB\}$ of $e(\uj')S_{\overline\alpha,\bfk}e(\ui')$ acts on the polynomial representation by linearly independent operators (because the polynomial representation of $R_{\alpha,\bfk}$ in Proposition \ref{ch3:prop_faith-rep-KLR} is faithful).
\end{itemize}
\end{proof}

\begin{rk}
$(a)$Note that Theorem \ref{ch3:thm_KLR-e-e+1} also remains true for an infinite quiver $\Gamma$ because $\alpha$ is supported on a finite number of vertices (see also Remark \ref{ch3:rem_KLR-infinite}).

$(b)$The formulas that define the isomorphism $\Phi_{\alpha,\bfk}$ become more natural if we look at them from the point of view of Khovanov-Lauda diagrams (see \cite{KL}). Diagrammatically, the isomorphism $\Phi_{\alpha,\bfk}$ looks in the following way. It sends a diagram representing an element of $R_{\alpha,\bfk}$ to the diagram (sometimes with a sign) obtained by replacing each strand with label $k\in I_1$ by two parallel strands with labels $k^1$ and $k^2$ (if there is a dot on the strand with label $k$, it should be moved to the strand with label $k^1$). For example, if $i,j\in I_0$ and $k\in I_1$, we have
$$
    \begin{tikzpicture}[scale=0.6, thick]

    \draw (0,0) -- (2,2) -- (1,3) node[above,at end]{$k$}; 
    \draw (1,0) -- (0,1) -- (0,3) node[above,at end]{$i$};
    \draw (2,0) -- (2,1) -- (1,2) -- (2,3) node[above,at end]{$j$};
    \fill (0,2) circle (5pt);
    \fill (1,1) circle (5pt);
    
    
    \node at (3.5,1.5) {$\mapsto$};
    
    \draw (5,0) -- (7,2) -- (6,3) node[above,at end]{$k^1$}; 
    \draw (5.4,0) -- (7.4,2) -- (6.4,3);
    \node at (6.6,3.47) {$k^2$}; 
    \draw (6,0) -- (5,1) -- (5,3) node[above,at end]{$i$};
    \draw (7.2,0) -- (7.2,1) -- (6.2,2) -- (7.2,3) node[above,at end]{$j$};
    \fill (5,2) circle (5pt);
    \fill (6,1) circle (5pt);

\end{tikzpicture}
$$
\end{rk}

\section{Categorical representations}
\label{ch3:sec_catO}

\subsection{The standard representation of $\widetilde{\mathfrak{sl}}_e$}
\label{ch3:subs_stand-rep-aff}

Consider the affine Lie algebra (over $\bbC$) $\widetilde{\mathfrak{sl}}_e=\mathfrak{sl}_e\otimes\bbC[t,t^{-1}]\oplus\bbC \bm{1}$. Let $e_i$, $f_i$, $h_i$, $i=0,1,\ldots,e-1$, be the standard generators of $\widetilde{\mathfrak{sl}}_e$ (see Remark \ref{ch3:rk-KM-Lie}).
Let $V_e$ be a $\bbC$-vector space with canonical basis $\{v_1,\cdots,v_e\}$ and set $U_e=V_e\otimes \bbC[z,z^{-1}]$. The vector space $U_e$ has a basis $\{u_r;~r\in\bbZ\}$ where $u_{a+eb}=v_a\otimes z^{-b}$ for $a\in[1,e]$, $b\in\bbZ$. It has a structure of an $\widetilde{\mathfrak{sl}}_e$-module such that
$$
f_i(u_r)=\delta_{i\equiv r}u_{r+1}\quad \mbox{and}\quad e_i(u_r)=\delta_{i\equiv r-1}u_{r-1}.
$$
Let $\{v'_1,\cdots,v'_{e+1}\}$ and $\{u'_r;r\in\bbZ\}$ denote the bases of $V_{e+1}$ and $U_{e+1}$.

Fix an integer $0\leqslant k<e$. Consider the following inclusion of vector spaces
$$
V_e\subset V_{e+1}, ~v_r\mapsto
\left\{\begin{array}{ll}
v'_r &\mbox{ if }r\leqslant k,\\
v'_{r+1} &\mbox{ if }r>k.
\end{array}\right.
$$
It yields an inclusion $\mathfrak{sl}_e\subset\mathfrak{sl}_{e+1}$ such that
$$
e_r\mapsto
\left\{\begin{array}{rl}
e_r &\mbox{ if }r\in[1,k-1],\\
{[e_k,e_{k+1}]} &\mbox{ if }r=k,\\
e_{r+1} &\mbox{ if }r\in[k+1,e-1],
\end{array}\right.
$$
$$
f_r\mapsto
\left\{\begin{array}{rl}
f_r &\mbox{ if }r\in[1,k-1],\\
{[f_{k+1},f_k]} &\mbox{ if }r=k,\\
f_{r+1} &\mbox{ if }r\in[k+1,e-1],
\end{array}\right.
$$
$$
h_r\mapsto
\left\{\begin{array}{rl}
h_r &\mbox{ if }r\in[1,k-1],\\
h_k+h_{k+1} &\mbox{ if }r=k,\\
h_{r+1} &\mbox{ if }r\in[k+1,e-1].
\end{array}\right.
$$

This inclusion lifts uniquely to an inclusion  $\widetilde{\mathfrak{sl}}_e\subset\widetilde{\mathfrak{sl}}_{e+1}$ such that
$$
e_0\mapsto
\left\{\begin{array}{rl}
e_0 &\mbox{ if }k\ne 0,\\
{[e_0,e_1]} &\mbox{ else},\\
\end{array}\right.
$$
$$
f_0\mapsto
\left\{\begin{array}{rl}
f_0 &\mbox{ if }k\ne 0,\\
{[f_1,f_0]} &\mbox{ else},\\
\end{array}\right.
$$
$$
h_0\mapsto
\left\{\begin{array}{rl}
h_0 &\mbox{ if }k\ne 0,\\
{h_0+h_1} &\mbox{ else}.\\
\end{array}\right.
$$

Consider the inclusion $U_e\subset U_{e+1}$ such that $u_{r}\mapsto u'_{\Upsilon(r)}$, where $\Upsilon$ is defined in (\ref{ch3:eq_upsilon}).

\smallskip
\begin{lem}
\label{ch3:lem-embed-rep-e-e+1}
The embeddings $V_e\subset V_{e+1}$ and $U_e\subset U_{e+1}$ are compatible with the actions of $\mathfrak{sl}_e\subset \mathfrak{sl}_{e+1}$ and $\widetilde{\mathfrak{sl}}_e\subset \widetilde{\mathfrak{sl}}_{e+1}$ respectively.
\qed
\end{lem}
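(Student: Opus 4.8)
The plan is to reduce everything to a direct check on Lie algebra generators. Write $\phi$ for the inclusion $\widetilde{\mathfrak{sl}}_e\subset\widetilde{\mathfrak{sl}}_{e+1}$ and $\iota$ for the map $U_e\hookrightarrow U_{e+1}$, $u_r\mapsto u'_{\Upsilon(r)}$. First I would observe that the set of $x\in\widetilde{\mathfrak{sl}}_e$ with $\iota(xu)=\phi(x)\iota(u)$ for all $u\in U_e$ is a Lie subalgebra: it is a linear subspace, and if $x,y$ lie in it then so does $[x,y]$, via $\iota([x,y]u)=\phi(x)\phi(y)\iota(u)-\phi(y)\phi(x)\iota(u)=\phi([x,y])\iota(u)$. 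Since $\widetilde{\mathfrak{sl}}_e$ is generated by the $e_i,f_i$ for $i\in[0,e-1]$, it then suffices to verify the identity for these. The statement about $V_e\subset V_{e+1}$ will follow by restriction: $V_e=V_e\otimes z^0$ is stable under the finite generators $e_i,f_i$ ($i\in[1,e-1]$), their images $\phi(e_i),\phi(f_i)$ involve only finite generators of $\mathfrak{sl}_{e+1}$ (which preserve $V_{e+1}\otimes z^0$), the restriction of the $U_e$-action to $V_e\otimes z^0$ is the standard $\mathfrak{sl}_e$-action, and $\iota$ restricts to the prescribed map $V_e\hookrightarrow V_{e+1}$. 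Finally, the verification for the $e_i$ being word-for-word parallel to that for the $f_i$, I would only carry out the latter.

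Next I would recall that $f_i(u_r)=\delta_{i\equiv r}\,u_{r+1}$ in $U_e$ and $f_j(u'_s)=\delta_{j\equiv s}\,u'_{s+1}$ in $U_{e+1}$, the congruences being modulo $e$ and $e+1$ respectively, and that $\Upsilon$ is the unique order-preserving injection $\bbZ\hookrightarrow\bbZ$ whose image omits the residue class of $k+1$ modulo $e+1$. For $i\neq k$ one has $\phi(f_i)=f_i$ (when $0\leqslant i<k$; this also covers $i=0$ if $k\neq0$) or $\phi(f_i)=f_{i+1}$ (when $k<i<e$), and in either case the identity $\iota(f_iu_r)=\phi(f_i)\iota(u_r)$ reduces to two bookkeeping facts about $\Upsilon$: that $i\equiv r\pmod e$ if and only if $\widehat{\imath}\equiv\Upsilon(r)\pmod{e+1}$ (where $\widehat{\imath}=i$, resp. $i+1$), and that, when these hold, $\Upsilon(r+1)=\Upsilon(r)+1$. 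Both are read off directly from the description of $\Upsilon$, so this part is routine.

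The one case requiring a genuine computation is $i=k$ (together with the formally identical case $i=0$ when $k=0$), where $\phi(f_k)=[f_{k+1},f_k]=f_{k+1}f_k-f_kf_{k+1}$. The key point is that $f_kf_{k+1}$ kills every $u'_s$: this would need $s\equiv k+1$ and $s+1\equiv k\pmod{e+1}$ at once, i.e. $2\equiv0\pmod{e+1}$, which fails for $e\geqslant2$. Hence $\phi(f_k)$ acts on $U_{e+1}$ by $u'_s\mapsto\delta_{k\equiv s}\,u'_{s+2}$, and comparing with $\iota(f_ku_r)=\delta_{k\equiv r}\,u'_{\Upsilon(r+1)}$ one is left to check that $k\equiv r\pmod e$ if and only if $k\equiv\Upsilon(r)\pmod{e+1}$, and that then $\Upsilon(r+1)=\Upsilon(r)+2$: indeed, when the class of $r$ modulo $e$ is $k$, the index $\Upsilon(r)$ has class $k$ modulo $e+1$, its successor $\Upsilon(r)+1$ is precisely the omitted index, so $\Upsilon(r+1)=\Upsilon(r)+2$. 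Conceptually this is the doubled-quiver mechanism of Section \ref{ch3:subs_not-quiv-I-Ibar}: passing from the cyclic quiver of length $e$ to that of length $e+1$ inserts, right after each vertex of colour $k$, a new vertex of colour $k+1$, so the hopping operator must step over two vertices at once, which is exactly why the commutator $[f_{k+1},f_k]$ (dually $[e_k,e_{k+1}]$) is the correct image and why, with the orderings inside the commutators as fixed in the statement, the resulting module map is compatible on the nose with no extra sign. I expect this commutator computation — and checking that those two orderings are the right ones — to be the only point that is not pure index bookkeeping, and hence the main thing to get right.
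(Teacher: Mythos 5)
Your proof is correct, and its structure is the natural one. The paper itself offers no argument here---the statement is stamped with $\qed$ immediately, so it is treated as a routine verification. Your write-up fills in exactly that omitted computation. A few remarks on the details you supplied.

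The reduction to generators via the observation that $\{x\in\widetilde{\mathfrak{sl}}_e : \iota(xu)=\phi(x)\iota(u)\text{ for all }u\}$ is a Lie subalgebra is valid, and since $\widetilde{\mathfrak{sl}}_e$ (in the Kac--Moody presentation of Remark \ref{ch3:rk-KM-Lie}, without $\partial$) is indeed generated by the $e_i,f_i$, this suffices. The deduction of the $V_e\subset V_{e+1}$ statement by restricting to the degree-zero piece $V_e\otimes z^0$ is also fine, since the finite generators and their images preserve the respective degree-zero subspaces. Your description of $\Upsilon$ as ``the order-preserving injection omitting the residue class of $k+1$ mod $e+1$'' is accurate as a heuristic but technically underspecifies the map; one should additionally record $\Upsilon(0)=0$ (which is immediate from (\ref{ch3:eq_upsilon})) to pin it down. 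Your two bookkeeping facts for $i\neq k$---that $i\equiv r\ (e)$ iff $\widehat\imath\equiv\Upsilon(r)\ (e+1)$ and that $\Upsilon(r+1)=\Upsilon(r)+1$ in that case---both check out against the explicit formula (\ref{ch3:eq_upsilon}), including the wrap-around subcase $b=e-1$.

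The essential case $i=k$ is handled correctly: $f_kf_{k+1}$ annihilates $U_{e+1}$ because $s\equiv k+1$ and $s+1\equiv k$ modulo $e+1$ would force $2\equiv 0$, impossible for $e\geqslant 2$; hence $\phi(f_k)=[f_{k+1},f_k]$ acts as $u'_s\mapsto\delta_{k\equiv s}\,u'_{s+2}$, and matching against $\iota(f_ku_r)$ comes down to $\Upsilon(r+1)=\Upsilon(r)+2$ when $r\equiv k\ (e)$, which holds because the inserted residue $k+1$ is precisely the skipped index. The $e_i$ case is indeed parallel (one uses $e_{k+1}e_k$ dying for the same mod-$(e+1)$ reason), and your closing remark that the two bracket orderings $[f_{k+1},f_k]$ and $[e_k,e_{k+1}]$ are precisely the ones that avoid a stray sign identifies the only genuinely delicate point.
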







\subsection{Type A quivers}
\label{ch3:subs-typeA_quiv}

Let $\Gamma_\infty=(I_\infty,H_\infty)$ be the quiver with the set of vertices $I_\infty=\bbZ$ and the set of arrows $H_\infty=\{i\to i+1;~i\in I_\infty\}$. Assume that $e>1$ is an integer. Let $\Gamma_e=(I_e,H_e)$ be the quiver with the set of vertices $I_e=\bbZ/e\bbZ$ and the set of arrows $H_e=\{i\to i+1;~i\in I_e\}$. Then $\frakg_{I_e}$ is the Lie algebra $\widetilde{\mathfrak{sl}}_e=\mathfrak{sl}_e\otimes\bbC[t,t^{-1}]\oplus\bbC \bm{1}$ (see Remark \ref{ch3:rk-KM-Lie}).

Assume that $\Gamma=(I,H)$ is a quiver whose connected components are of the form $\Gamma_e$, with $e\in\bbN$, $e>1$ or $e=\infty$. For $i\in I$ denote by $i+1$ and $i-1$ the (unique) vertices in $I$ such that there are arrows $i\to i+1$ and $i-1\to i$.

Let $X_I$ be the free abelian group with basis $\{\varepsilon_i;~i\in I\}$. Set also
\begin{equation}
\label{ch3:eq_X+}
X^+_I=\bigoplus_{i\in I}\bbN\varepsilon_i.
\end{equation}
Let us also consider the following additive map
$$
\iota\colon Q_I\to X_I,\quad\alpha_i\mapsto \varepsilon_i-\varepsilon_{i+1}.
$$
We may omit the symbol $\iota$ and write $\alpha$ instead of $\iota(\alpha)$.
Let $\phi$ denote also the unique additive embedding
\begin{equation}
\label{ch3:eq_phi(mu)}
\phi\colon X_I\to X_{\overline I}, \quad\varepsilon_i\mapsto \varepsilon_{i'},
\end{equation}
where
$$
i'=
\left\{\begin{array}{ll}
i^0 &\mbox{ if }i\in I_0,\\
i^1&\mbox{ if }i\in I_1.
\end{array}\right.
$$

\subsection{Categorical representations}
\label{ch3:subs_categ-action}

\medskip

Let $\Gamma=(I,H)$ be a quiver as in Section \ref{ch3:subs-typeA_quiv}. Let $\bfk$ be a field. Assume that $\calC$ is a $\Hom$-finite $\bfk$-linear abelian category.

\smallskip
\begin{df}
\label{ch3:def-categ_action-KLR} A $\mathfrak{g}_I$-categorical representation $(E,F,x,\tau)$ in $\calC$ is the following data:
\begin{itemize}
    \item[(1)] a decomposition $\calC=\bigoplus_{\mu\in X_I}\calC_\mu$,
    \item[(2)] a pair of biadjoint exact endofunctors $(E,F)$ of $\calC$,
    \item[(3)] morphisms of functors $x\colon F\to F$ and $\tau\colon F^2\to F^2$,
    \item[(4)] decompositions $E=\bigoplus_{i\in I}E_i$ and $F=\bigoplus_{i\in I}F_i$,
\end{itemize}
satisfying the following conditions.
\begin{itemize}
    \item[(a)] We have $E_i(\calC_\mu)\subset\calC_{\mu+\alpha_i}$, $F_i(\calC_\mu)\subset\calC_{\mu-\alpha_i}$.

    \item[(b)]

For each $d\in\bbN$ there is an algebra homomorphism $\psi_d\colon R_{d,\bfk}\to \End(F^d)^{\rm op}$ such that
$\psi_d(e(\ui))$ is the projector to $F_{i_d}\cdots F_{i_1}$, where $\ui=(i_1,\cdots,i_d)$ and
$$
\psi_d(x_r)=F^{d-r}x F^{r-1},
\qquad
\psi_d(\tau_r)=F^{d-r-1}\tau F^{r-1}.
$$
    \item[(c)] For each $M\in\calC$ the endomorphism of $F(M)$ induced by $x$ is nilpotent.
\end{itemize}

\end{df}

\smallskip
\begin{rk}
\label{ch3:rk_df-repl-F-by-E}
$(a)$ For a pair of adjoint functors $(E,F)$ we have an isomorphism $\End(E^d)\simeq \End(F^d)^{\rm op}$. In particular, the algebra homomorphism $R_{d,\bfk}\to \End(F^d)^{\rm op}$ in Definition \ref{ch3:def-categ_action-KLR} yields an algebra homomorphism $R_{d,\bfk}\to \End(E^d)$.

$(b)$If the quiver $\Gamma$ is infinite, the direct sums in $(4)$ should be understood in the following way. For each object $M\in \calC$, there is only a finite number of $i\in I$ such that $E_i(M)$ and $F_i(M)$ are nonzero.
\end{rk}


\subsection{From $\widetilde{\mathfrak{sl}}_{e+1}$-categorical representations to $\widetilde{\mathfrak{sl}}_{e}$-categorical representations}
\label{ch3:subs_cat-lem}



As in Section \ref{ch3:subs_stand-rep-aff}, we fix $0\leqslant k<e$. Only in Section \ref{ch3:subs_cat-lem}, we assume that $\Gamma=(I,H)$ and $\overline\Gamma=(\overline I,\overline H)$ are fixed as in as in Section \ref{ch3:subs_not-e-e+1} (i.e., we have $\Gamma=\Gamma_e$, $I_1=\{k\}$ and we idenfity $\overline\Gamma$ with $\Gamma_{e+1}$).

Let $\overline\calC$ be a $\Hom$-finite abelian $\bfk$-linear category. Let
$$
\overline E=\overline E_0\oplus\overline E_1\oplus\cdots\oplus\overline E_e,\qquad \overline F=\overline F_0\oplus\overline F_1\oplus\cdots\oplus\overline F_e
$$
be endofunctors defining a $\widetilde{\mathfrak{sl}}_{e+1}$-categorical representation in $\overline\calC$.
Let $\overline\psi_d\colon R_{d,\bfk}\to \End(\overline F^d)^{\rm op}$ be the corresponding algebra homomorphism.
We set $\overline F_\ui=\overline F_{i_d}\cdots \overline F_{i_1}$ for any tuple $\ui=(i_1,\cdots,i_d)\in \overline I^d$ and $\overline F_{\overline\alpha}=\bigoplus_{\ui\in \overline I^{\overline\alpha}}\overline F_\ui$ for any element $\overline\alpha\in Q_{\overline I}^+$.
If $|\overline\alpha|=d$ let $\overline \psi_{\overline\alpha}\colon R_{\overline\alpha,\bfk}\to \End(\overline F_{\overline\alpha})^{\rm op}$ be the $\overline\alpha$-component of $\overline \psi_d$.

Now, recall the notation $X_{\overline I}^+$ from (\ref{ch3:eq_X+}). Assume that we have
\begin{equation}
\label{ch3:eq_ass-Cmu-0}
\overline\calC_\mu=0, \quad\forall\mu\in X_{\overline I}\backslash X^+_{\overline I}.
\end{equation}
For $\mu\in X^+_I$ set $\calC_\mu=\overline\calC_{\phi(\mu)}$, where the map $\phi$ is as in (\ref{ch3:eq_phi(mu)}). Let $\calC=\bigoplus_{\mu\in X^+_I}\calC_\mu$.

\smallskip
\begin{rk}
$(a)$ $\calC$ is stable by $\overline F_i$, $\overline E_i$ for each $i\ne k,k+1$,

$(b)$ $\calC$ is stable by $\overline F_{k+1}\overline F_k$, $\overline E_k\overline E_{k+1}$,

$(c)$ $\overline F_{i_d}\overline F_{i_{d-1}}\cdots\overline F_{i_1}(M)=0$ for each $M\in \calC$ whenever the sequence $(i_1,\cdots,i_d)$ is unordered (see Section \ref{ch3:subs_bal-quot}).
\end{rk}

\smallskip
Consider the following endofunctors of $\calC$:
$$
E_i=
\left\{
\begin{array}{lll}
\restr{\overline E_i}{\calC} &\mbox{ if } 0\leqslant i<k,\\
\restr{\overline E_{k}\overline E_{k+1}}{\calC} &\mbox{ if } i=k,\\
\restr{\overline E_{i+1}}{\calC} &\mbox{ if } k<i<e,
\end{array}
\right.
$$
$$
F_i=
\left\{
\begin{array}{lll}
\restr{\overline F_i}{\calC} &\mbox{ if } 0\leqslant i<k,\\
\restr{\overline F_{k+1}\overline F_k}{\calC} &\mbox{ if } i=k,\\
\restr{\overline F_{i+1}}{\calC} &\mbox{ if } k<i<e.
\end{array}
\right.
$$
Similarly to the notations above we set $F_\ui=F_{i_d}\cdots F_{i_1}$ for any tuple $\ui=(i_1,\cdots,i_d)\in I^d$ and $F_\alpha=\bigoplus_{\ui\in I^\alpha}F_\ui$ for any element $\alpha\in Q_{I}^+$.
Note that we have $F_\ui=\restr{{\overline F_{\phi(\ui)}}}{\calC}$ for each $\ui\in I^\alpha$.

Let $\alpha\in Q^+_I$ and $\overline\alpha=\phi(\alpha)$. Note that we have
$$
F_\alpha=\bigoplus_{\ui\in \overline I^{\overline\alpha}_{\rm ord}} \restr{\overline F_\ui}{\calC}.
$$ The homomorphism $\overline\psi_{\overline\alpha}$ yields a homomorphism $\bfe R_{\overline\alpha,\bfk}\bfe\to \End(F_{\alpha})^{\rm op}$, where $\bfe=\sum_{\ui\in \overline I^{\overline\alpha}_{\rm ord}}e(\ui)$. By (c), the homomorphism $\bfe R_{\overline\alpha,\bfk}\bfe\to \End(F_{\alpha})^{\rm op}$ factors through a homomorphism $S_{\overline\alpha,\bfk}\to \End(F_{\alpha})^{\rm op}$. Let us call it $\overline\psi'_{\overline\alpha}$. Then we can define an algebra homomorphism $\psi_{\alpha}\colon R_{\alpha,\bfk}\to\End(F_\alpha)^{\rm op}$ by setting $\psi_\alpha=\overline\psi'_{\overline\alpha}\circ\Phi_{\alpha,\bfk}$.

Now, Theorem \ref{ch3:thm_KLR-e-e+1} implies the following result.

\smallskip
\begin{thm}
\label{ch3:thm_categ-e-e+1}
For each category $\overline\calC$, defined as above, that satisfies (\ref{ch3:eq_ass-Cmu-0}), we have a categorical representation of $\widetilde{\mathfrak{sl}}_e$ in the subcategory $\calC$ of $\overline\calC$ given by functors $F_i$ and $E_i$ and the algebra homomorphisms $\psi_\alpha\colon R_{\alpha,\bfk}\to \End(F_\alpha)^{\rm op}$.
\qed
\end{thm}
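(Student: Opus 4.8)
The plan is to check, one axiom at a time, the data and conditions of Definition~\ref{ch3:def-categ_action-KLR} for the category $\calC=\bigoplus_{\mu\in X_I}\calC_\mu$ (with $\calC_\mu=\overline\calC_{\phi(\mu)}$ for $\mu\in X^+_I$ and $\calC_\mu=0$ otherwise), the functors $E_i,F_i$, and the homomorphisms $\psi_\alpha=\overline\psi'_{\overline\alpha}\circ\Phi_{\alpha,\bfk}$ already constructed. Items~(1) and~(4) hold by the very construction of $\calC$ and of $E=\bigoplus_iE_i$, $F=\bigoplus_iF_i$. For item~(2): the functors $\overline E_i,\overline F_i$ are biadjoint and exact, composites of biadjoint (resp. exact) functors are biadjoint (resp. exact), and $\calC$ is a direct summand of $\overline\calC$ as an abelian category, being a sum of weight spaces; by the Remark preceding the statement it is stable under $\overline E_i,\overline F_i$ for $i\ne k,k+1$ and under $\overline E_k\overline E_{k+1}$, $\overline F_{k+1}\overline F_k$, so the units and counits of the relevant adjunctions restrict to $\calC$ and exactness is inherited. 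Thus $(E,F)$ is a biadjoint pair of exact endofunctors of $\calC$. For item~(3) we take $x=\psi_1(x_1)\colon F\to F$ and $\tau=\psi_2(\tau_1)\colon F^2\to F^2$, where $\psi_1,\psi_2$ are assembled from the $\psi_\alpha$ as in the next paragraph.

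For the weight axiom~(a) I would compute the additive map $\phi\circ\iota\colon Q_I\to X_{\overline I}$ on simple roots. Because $\overline\Gamma=\Gamma_{e+1}$ is obtained from $\Gamma=\Gamma_e$ by replacing the vertex $k$ with the pair $k,k+1$, a direct check gives $\phi(\alpha_i)=\alpha_i$ for $0\le i<k$, $\phi(\alpha_k)=\alpha_k+\alpha_{k+1}$, and $\phi(\alpha_i)=\alpha_{i+1}$ for $k<i<e$, the right-hand sides being simple roots of $\widetilde{\mathfrak{sl}}_{e+1}$ viewed in $X_{\overline I}$. Since $E_i$ equals $\overline E_i$, $\overline E_k\overline E_{k+1}$, or $\overline E_{i+1}$ in the three respective ranges, it shifts weights precisely by these elements; as $\calC_\mu=\overline\calC_{\phi(\mu)}$, this yields $E_i(\calC_\mu)\subseteq\calC_{\mu+\alpha_i}$ and, symmetrically, $F_i(\calC_\mu)\subseteq\calC_{\mu-\alpha_i}$. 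This is the categorical reflection of the Lie algebra inclusion $\widetilde{\mathfrak{sl}}_e\subset\widetilde{\mathfrak{sl}}_{e+1}$ of Lemma~\ref{ch3:lem-embed-rep-e-e+1}.

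The heart of the proof is axiom~(b). Since $F^d=\bigoplus_{|\alpha|=d}F_\alpha$ and $R_{d,\bfk}=\bigoplus_{|\alpha|=d}R_{\alpha,\bfk}$ with the $e(\alpha)$ orthogonal central idempotents, the $\psi_\alpha$ ($|\alpha|=d$) assemble to an algebra homomorphism $\psi_d\colon R_{d,\bfk}\to\bigoplus_{|\alpha|=d}\End(F_\alpha)^{\rm op}\subseteq\End(F^d)^{\rm op}$, and what remains is to identify $\psi_d$ on generators. The identity that $\psi_d(e(\ui))$ is the projector onto $F_{i_d}\cdots F_{i_1}$ is immediate from $\Phi_{\alpha,\bfk}(e(\ui))=e(\phi(\ui))$, the corresponding property of $\overline\psi_{\overline\alpha}$, and $\restr{\overline F_{\phi(\ui)}}{\calC}=F_{i_d}\cdots F_{i_1}$ (recall $\restr{\overline F_\uj}{\calC}=0$ for unordered $\uj$). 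For dots and crossings, restrict to a fixed summand $F_\ui$ and unwind: $\Phi_{\alpha,\bfk}(x_re(\ui))=x^*_re(\phi(\ui))=x_{r'}e(\phi(\ui))$, and by axiom~(b) for $\overline\calC$ the operator $\overline\psi_{\overline\alpha}(x_{r'})$ places the dot morphism on the strand in position $r'$ of $\overline F_{\phi(\ui)}$; since $\phi$ turns $i_r$ into the block $\phi(i_r)$ beginning at position $r'$, this is exactly the strand on which $F^{d-r}xF^{r-1}$ acts, once one uses $F_{i_r}=\restr{\overline F_{\phi(i_r)}}{\calC}$ together with the definition $\restr{x}{F_{i_r}}=\psi_{\alpha_{i_r}}(x_1)$. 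Hence $\psi_d(x_r)=F^{d-r}xF^{r-1}$. Running the same unwinding for $\tau$, using the five-case formula for $\tau^*_r$ — whose three shapes ($\tau_{r'}$; $\tau_{r'}\tau_{r'+1}$ or $\tau_{r'+1}\tau_{r'}$; $\pm\,\tau_{r'+1}\tau_{r'+2}\tau_{r'}\tau_{r'+1}$) express a single $\tau$-crossing passing a simple strand, one doubled strand, or two doubled strands, with the sign already incorporated into $\tau^*_r$ — gives $\psi_d(\tau_r)=F^{d-r-1}\tau F^{r-1}$. I expect this last bookkeeping, tracking the reindexing $r\mapsto r'$ and the multi-crossing formulas uniformly in $d$, to be the main obstacle, although the genuine content has already been absorbed into Theorem~\ref{ch3:thm_KLR-e-e+1} and Lemma~\ref{ch3:lem_comp-operators}.

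It remains to check the nilpotency axiom~(c). For $M\in\calC$ and $i\ne k$ the endomorphism of $F_iM=\restr{\overline F_{\phi(i)}}{\calC}(M)$ induced by $x$ is the one induced by the dot morphism of $\overline\calC$, which is nilpotent by axiom~(c) for $\overline\calC$; and the endomorphism of $F_kM=\overline F_{k+1}\overline F_kM$ induced by $x$ is $\overline F_{k+1}$ applied to the nilpotent endomorphism of $\overline F_kM$ induced by the dot morphism, hence again nilpotent. This verifies all of the axioms, so $\calC$ carries a categorical representation of $\widetilde{\mathfrak{sl}}_e$, as claimed.
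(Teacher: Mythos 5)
Your proposal is correct and follows the same route as the paper: the paper's proof consists exactly of the construction of $\psi_\alpha=\overline\psi'_{\overline\alpha}\circ\Phi_{\alpha,\bfk}$ done in the preceding paragraphs plus an appeal to Theorem~\ref{ch3:thm_KLR-e-e+1}, with the verification of axioms (a)--(c) left implicit. You simply spell out that verification (the weight computation $\phi\circ\iota$ on simple roots, the biadjointness and exactness by restriction, the identification of $\psi_d$ on generators, and nilpotency of the dot on $F_k=\overline F_{k+1}\overline F_k$), which is precisely what the paper treats as routine.
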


Now, we describe the example that motivated us to prove Theorem \ref{ch3:thm_categ-e-e+1}. See \cite{Mak-Zuck} for details.
\begin{ex}
Let $U_e$, $V_e$ be as in Section \ref{ch3:subs_stand-rep-aff}. Fix $\nu=(\nu_1,\cdots,\nu_l)\in\bbN^l$ and put $N=\sum_{r=1}^l \nu_r$. Set $\wedge^\nu U_e=\wedge^{\nu_1}U_e\otimes\cdots\otimes \wedge^{\nu_l} U_e$.

Let $O^\nu_{-e}$ be the parabolic category $\calO$ for $\widehat{\mathfrak{gl}}_N$ with parabolic type $\nu$ at level $-e-N$. The categorical representation of $\widetilde{\mathfrak{sl}}_{e}$ in $O^\nu_{-e}$ (constructed in \cite{RSVV}) yields an $\widetilde{\mathfrak{sl}}_{e}$-module structure on the (complexified) Grothendieck group $[O^\nu_{-e}]$ of $O^\nu_{-e}$. This module is isomorphic to $\wedge^\nu U_e$. 

Let us apply Theorem \ref{ch3:thm_main-thm-categ-rep-int-art} to $\overline\calC=O^\nu_{-(e+1)}$. It happens that in this case the subcategory $\calC\subset \overline\calC$ defined as above is equivalent to $O^\nu_{-e}$. The embedding of categories $O^\nu_{-e}\subset O^\nu_{-(e+1)}$ categorifies the embedding $\wedge^\nu U_e\subset \wedge^\nu U_{e+1}$ (see also Lemma \ref{ch3:lem-embed-rep-e-e+1}).

\end{ex}
 
\subsection{Reduction of the number of idempotents}
In this section we show that it is possible to reduce the number of idempotents in the quotient in Definition \ref{ch3:def_bal-KLR}. This is necessary to generalise Theorem \ref{ch3:thm_categ-e-e+1}. Here we assume the quivers $\Gamma=(I,H)$ and $\overline\Gamma=(\overline I,\overline H)$ are as in Section \ref{ch3:subs_not-quiv-I-Ibar}.

We fix $\alpha\in Q_I^+$ and put $\overline\alpha=\phi(\alpha)$. We say that the sequence $\ui\in \overline I^{\overline\alpha}$ is \emph{almost ordered} if there exists a well-ordered sequence $\uj\in \overline I^{\overline\alpha}$ such that there exists an index $r$ such that $j_r\in \overline I_1$ and $\ui=s_r(\uj)$. It is clear from the definition that each almost ordered sequence is unordered because the subsequence $(i_1,i_2,\cdots,i_r)$ of $\ui$ contains more elements from $\overline I_2$ than from $\overline I_1$. The following lemma reduces the number of generators of the kernel of $\bfe R_{\overline\alpha,\bfk}\bfe\to S_{\overline\alpha,\bfk}$ (see Definition \ref{ch3:def_bal-KLR}).

\smallskip
\begin{lem}
\label{ch3:lem-red_number_idemp}
The kernel of the homomorphism $\bfe R_{\overline\alpha,\bfk}\bfe\to S_{\overline\alpha,\bfk}$ is equal to $\sum_{\ui}\bfe R_{\overline\alpha,\bfk}e(\ui) R_{\overline\alpha,\bfk}\bfe$, where $\ui$ runs over the set of all almost ordered sequences in $\overline I^{\overline\alpha}$.
\end{lem}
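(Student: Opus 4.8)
The inclusion $\sum_{\ui \text{ almost ordered}}\bfe R_{\overline\alpha,\bfk}e(\ui)R_{\overline\alpha,\bfk}\bfe \subseteq \sum_{\ui \text{ unordered}}\bfe R_{\overline\alpha,\bfk}e(\ui)R_{\overline\alpha,\bfk}\bfe$ is immediate, since every almost ordered sequence is unordered (as noted in the text). So the whole content is the reverse inclusion: I must show that for every unordered $\uj\in\overline I^{\overline\alpha}$, the two-sided ideal piece $\bfe R_{\overline\alpha,\bfk}e(\uj)R_{\overline\alpha,\bfk}\bfe$ is already contained in the ideal $\calK$ generated by the almost ordered idempotents. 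The plan is to fix an arbitrary unordered $\uj$ and an element $a e(\uj) b$ with $a\in\bfe R_{\overline\alpha,\bfk}e(\uj)$, $b\in e(\uj)R_{\overline\alpha,\bfk}\bfe$, and rewrite it modulo $\calK$ using the KLR basis of Remark \ref{ch3:rk_basis-KLR} and the fact that $\bfe=\sum_{\ui\in\overline I^{\overline\alpha}_{\rm ord}}e(\ui)$.

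\textbf{Key steps.} First I would reduce to basis elements: write $a = \sum e(\ui')\, a$ and $b = b\, e(\ui'')$ with $\ui',\ui''$ well-ordered, so it suffices to treat $e(\ui')\, c\, e(\uj)\, c'\, e(\ui'')$ for basis monomials $c=\tau_w x^{\bf m}$ and $c'=\tau_{w'}x^{\bf n}$ (times $e(\uj)$), $w(\uj)=\ui'$, $w'(\ui'')=\uj$. The key combinatorial claim is: \emph{if $\uj$ is unordered, then either $e(\uj)$ itself lies in $\calK$ up to replacing $\uj$ by an almost ordered sequence, or $\uj$ can be connected to an almost ordered sequence by a short chain of adjacent transpositions that can be absorbed.} More precisely, since $\uj$ is unordered there is a smallest index $r$ with strictly more $\overline I_2$-entries than $\overline I_1$-entries among $(j_1,\dots,j_r)$; then $j_r\in\overline I_2$, say $j_r = i^2$, and among $j_1,\dots,j_{r-1}$ the $i^1$-entries and $i^2$-entries that have appeared pair up, so the ``extra'' $i^2=j_r$ has no matching $i^1$ to its left. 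I would then argue by downward induction on the position $r$ (or on a suitable statistic measuring how far $\uj$ is from being almost ordered): using the commutation, nilHecke, and braid relations of Definition \ref{ch3:def_KLR}, together with Remark \ref{ch3:rk_basis-KLR} (straightening non-reduced or re-expressed $\tau$-words into lower-order terms whose underlying words are reduced), I can push the offending $i^2$ leftwards through the monomial $c'$ and $c$ so that at some stage we encounter a factor $e(\uk)$ with $\uk$ almost ordered — or a factor $\tau_s^2 e(\uk)$ with $i_s=i^1$, which by Remark \ref{ch3:rem_xa=xa+1}-type reasoning lands us in $\bfe R_{\overline\alpha,\bfk}e(s_s\uk)R_{\overline\alpha,\bfk}\bfe$ with $s_s\uk$ almost ordered. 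The induction terminates because each step strictly decreases the chosen statistic, and the base case is exactly ``$\uj$ almost ordered'', which is by definition in $\calK$.

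\textbf{Alternative, cleaner route.} Rather than pushing transpositions around by hand, I would prefer to prove the statement representation-theoretically, paralleling the proof of Lemma \ref{ch3:lem_unord-mapsto-ideal}: show that the ideal $\calK$ already contains $\bfe R_{\overline\alpha,\bfk}e(\ui)R_{\overline\alpha,\bfk}\bfe$ for every unordered $\ui$ by induction on the minimal length of a permutation $w$ with $w(\ui)=\ui_0$ for some well-ordered $\ui_0$ — but this only makes sense when $\ui$ can be well-ordered by a permutation, i.e. when $\ui\in\overline I^{\overline\alpha}$, which it always can since $\overline\alpha=\phi(\alpha)$ has $d_{i^1}=d_{i^2}$. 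For such $w$ of length $\geq 1$, pick (as in Lemma \ref{ch3:lem_unord-mapsto-ideal}'s second paragraph) an index $r$ with $i_r=i^1$, $i_{r+1}=i^2$ exchanged last, i.e. a reduced expression $w = s_r s_{r_1}\cdots s_{r_h}$; then $s_r(\ui)$ is almost ordered and $e(\ui) R_{\overline\alpha,\bfk}\bfe \subseteq \tau_r e(s_r\ui)R_{\overline\alpha,\bfk}\bfe + (\text{lower terms})$ by Remark \ref{ch3:rk_basis-KLR}. The ``lower terms'' are handled by the induction hypothesis (shorter $w$), and $\bfe R_{\overline\alpha,\bfk}\tau_r e(s_r\ui) R_{\overline\alpha,\bfk}\bfe\subseteq \bfe R_{\overline\alpha,\bfk}e(s_r\ui)R_{\overline\alpha,\bfk}\bfe$ with $s_r\ui$ almost ordered. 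Symmetrically on the left factor. This isolates the one genuine obstacle: controlling the ``lower terms'' in the straightening, i.e. making sure that when $\tau_w e(\ui)$ is rewritten via a different reduced (or non-reduced) expression the resulting monomials $\tau_{q_1}\cdots\tau_{q_t}x^{\bf b}e(\ui)$ all still factor through an idempotent $e(\uk)$ with $\uk$ unordered (indeed reachable from a well-ordered sequence by a shorter permutation). That bookkeeping — that every intermediate sequence along a reduced word from a well-ordered sequence to an unordered one is itself unordered, so that the induction stays inside the class we control — is the part I expect to require the most care, and it is essentially the combinatorial heart already used implicitly in Lemmas \ref{ch3:lem_unord-mapsto-ideal} and \ref{ch3:lem_KLR-pres-ideal}.
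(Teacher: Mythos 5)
Your ``alternative, cleaner route'' is exactly the paper's argument: expand $\bfe a e(\uj) b\bfe$ via Remark~\ref{ch3:rk_basis-KLR}, reduce to showing that $e(\ui)\tau_{p_1}\cdots\tau_{p_k}e(\uj)b\bfe$ lies in the ideal for $\ui$ well-ordered and $\uj$ unordered, induct on $k$, use the combinatorial fact (same one invoked in Lemma~\ref{ch3:lem_unord-mapsto-ideal}) that there is $r$ with $i_r\in\overline I_1$ and $w^{-1}(r+1)<w^{-1}(r)$ so that $w$ has a reduced expression $s_r s_{r_1}\cdots s_{r_h}$, and factor the main term through $e(s_r(\ui))$, which is almost ordered by construction. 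Two small remarks. First, the ``genuine obstacle'' you flag at the end is not actually an obstacle: after left-multiplying by $e(\ui)$ and right-multiplying by $e(\uj)$, the surviving lower-order correction terms from Remark~\ref{ch3:rk_basis-KLR} are again of the form $e(\ui)\tau_{q_1}\cdots\tau_{q_t}x^{\mathbf b}e(\uj)b\bfe$ with $t<k$ and the \emph{same} endpoints $\ui,\uj$, so the induction on $k$ closes directly; no control over intermediate sequences along the reduced word is needed. Second, watch your notation: the index $r$ with $i_r=i^1$, $i_{r+1}=i^2$ and the almost-ordered sequence $s_r(\,\cdot\,)$ must be taken in the \emph{well-ordered} sequence (your $\ui_0$), not in the unordered $\ui$ — in the unordered sequence the entries at $r,r+1$ need not form an $(i^1,i^2)$ pair at all.
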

\begin{proof}
Denote by $J$ the ideal $\sum_{\ui}\bfe R_{\overline\alpha,\bfk}e(\ui) R_{\overline\alpha,\bfk}\bfe$ of $\bfe R_{\overline\alpha,\bfk}\bfe$, where $\ui$ runs over the set of all almost ordered sequences in $\overline I^{\overline\alpha}$.

By definition, each element of the kernel of $\bfe R_{\overline\alpha,\bfk}\bfe\to S_{\overline\alpha,\bfk}$ is a linear combination of elements of the form $\bfe a e(\uj)b\bfe$, where $a$ and $b$ are in $R_{\overline\alpha,\bfk}$ and the sequence $\uj$ is unordered. By Remark \ref{ch3:rk_basis-KLR}, it is enough to prove that for each $\ui\in \overline I^{\overline\alpha}_{\rm ord}$, $\uj\in \overline I^{\overline\alpha}_{\rm un}$, $b\in R_{\overline\alpha,\bfk}$ and indices $p_1,\cdots,p_k$ the element $e(\ui)\tau_{p_1}\cdots\tau_{p_k}e(\uj)b\bfe$ is in $J$. We will prove this statement by induction on $k$.

Assume that $k=1$. Write $p=p_1$. The element $e(\ui)\tau_{p}e(\uj)b\bfe$ may be nonzero only if $\ui=s_p(\uj)$. This is possible only if the sequence $\uj$ is almost ordered. Thus the element $e(\ui)\tau_{p}e(\uj)b\bfe$ is in $J$.

Now, assume that $k>1$ and that the statement is true for each value $<k$. Set $w=s_{p_1}\cdots s_{p_k}$. We may assume that $\ui=w(\uj)$, otherwise the element $e(\ui)\tau_{p_1}\cdots\tau_{p_k}e(\uj)b\bfe$ is zero.
By assumptions on $\ui$ and $\uj$ there is an index $r\in [1,d]$ such that $i_r\in \overline I_1$ and $w^{-1}(r+1)<w^{-1}(r)$. Thus $w$ has a reduced expression of the form $w=s_rs_{r_1}\cdots s_{r_h}$. This implies that $\tau_{p_1}\cdots\tau_{p_k}e(\uj)$ is equal to a monomial of the form $\tau_{r}\tau_{r_1}\cdots\tau_{r_h} e(\uj)$ modulo monomials of the form $\tau_{q_1}\cdots\tau_{q_t}x_1^{b_1}\cdots x_d^{b_d}e(\uj)$ with $t<k$, see Remark \ref{ch3:rk_basis-KLR}. Thus the element $e(\ui)\tau_1\cdots\tau_ke(\uj)b\bfe$ is equal to $e(\ui)\tau_{r}\tau_{r_1}\cdots\tau_{r_h} e(\uj)b\bfe$ modulo the elements of the same form $e(\ui)\tau_{p_1}\cdots\tau_{p_k}e(\uj)b\bfe$ with smaller $k$. The element $e(\ui)\tau_{r}\tau_{r_1}\cdots\tau_{r_h} e(\uj)b\bfe$ is in $J$ because the sequence $s_r(\ui)$ is almost ordered and the additional terms are in $J$ by the induction assumption.
\end{proof}

\subsection{Generalization of Theorem \ref{ch3:thm_categ-e-e+1}}
In this section we modify slightly the definition of a categorical representation given in Definition \ref{ch3:def-categ_action-KLR}. The only difference is that we use the lattice $Q_I$ instead of $X_I$. This new definition is not equivalent to Definition \ref{ch3:def-categ_action-KLR}.  In this section we work with an arbitrary quiver $\Gamma=(I,H)$ without $1$-loops.

Let $\bfk$ be a field. Let $\calC$ be a $\bfk$-linear $\Hom$-finite category.


\smallskip
\begin{df}
A $\mathfrak{g}_I$-quasi-categorical representation $(E,F,x,\tau)$ in $\calC$ is the following data:
\begin{itemize}
    \item[(1)] a decomposition $\calC=\bigoplus_{\alpha\in Q_I}\calC_\alpha$,
    \item[(2)] a pair of biadjoint exact endofunctors $(E,F)$ of $\calC$,
    \item[(3)] morphisms of functors $x\colon F\to F$, $\tau\colon F^2\to F^2$,
    \item[(4)] decompositions $E=\bigoplus_{i\in I}E_i$, $F=\bigoplus_{i\in I}F_i$,
\end{itemize}
satisfying the following conditions.
\begin{itemize}
    \item[(a)] We have $E_i(\calC_\alpha)\subset\calC_{\alpha-\alpha_i}$, $F_i(\calC_\alpha)\subset\calC_{\alpha+\alpha_i}$.

    \item[(b)]

For each $d\in\bbN$ there is an algebra homomorphism $\psi_d\colon R_{d,\bfk}\to \End(F^d)^{\rm op}$ such that
$\psi_d(e(\ui))$ is the projector to $F_{i_d}\cdots F_{i_1}$, where $\ui=(i_1,\cdots,i_d)$ and
$$
\psi_d(x_r)=F^{d-r}x F^{r-1},
\qquad
\psi_d(\tau_r)=F^{d-r-1}\tau F^{r-1}.
$$
    \item[(c)] For each $M\in\calC$ the endomorphism of $F(M)$ induced by $x$ is nilpotent.    
\end{itemize}
If the quiver $\Gamma$ is infinite, condition $(4)$ should be understood in the same way as in Remark \ref{ch3:rk_df-repl-F-by-E} $(b)$.
\end{df}


\smallskip
Now, fix a decomposition $I=I_0\coprod I_1$ as in Section \ref{ch3:subs_not-quiv-I-Ibar}. We consider the quiver $\overline\Gamma=(\overline I,\overline H)$ and the map $\phi$ as in Section \ref{ch3:subs_not-quiv-I-Ibar}. To distinguish the elements of $Q_I$ and $Q_{\overline I}$, we write $Q_{\overline I}=\bigoplus_{i\in \overline I}\bbZ\overline\alpha_i$. For each $\alpha\in Q_I$ we set $\overline\alpha=\phi(\alpha)\in Q_{\overline I}$. (See Section \ref{ch3:subs_not-quiv-I-Ibar} for the notation.) However we can sometimes use the symbol $\overline\alpha$ for an arbitrary element of $Q_{\overline I}$ that is not associated with some $\alpha$ in $Q_I$.
Let $\overline\calC$ be a $\Hom$-finite abelian $\bfk$-linear category. Let
$\overline E=\bigoplus_{i\in \overline I}\overline E_i$ and $\overline F=\bigoplus_{i\in \overline I}\overline F_i$
be endofunctors defining a $\frakg_{\overline I}$-quasi-categorical representation in $\overline\calC$.
Let $\overline\psi_d\colon R_{d,\bfk}(\overline\Gamma)\to \End(\overline F^d)^{\rm op}$ be the corresponding algebra homomorphism.
We set $\overline F_\ui=\overline F_{i_d}\cdots \overline F_{i_1}$ for any tuple $\ui=(i_1,\cdots,i_d)\in \overline I^d$ and $\overline F_{\overline\alpha}=\bigoplus_{\ui\in \overline I^{\overline\alpha}}\overline F_\ui$ for any element $\overline\alpha\in Q_{\overline I}^+$.
If $|\overline\alpha|=d$, let $\overline \psi_{\overline\alpha}\colon R_{\overline\alpha,\bfk}\to \End(\overline F_{\overline\alpha})^{\rm op}$ be the $\overline\alpha$-component of $\overline \psi_d$.

Assume that $\calC$ is an abelian subcategory of $\overline\calC$ satisfying the following conditions:
\begin{itemize}
\item[(a)] $\calC$ is stable by $\overline F_i$ and $\overline E_i$ for each $i\in I_0$,

\item[(b)] $\calC$ is stable by $\overline F_{i^2}\overline F_{i^1}$ and $\overline E_{i^1}\overline E_{i^2}$ for each $i\in I_1$,

\item[(c)] we have $\overline F_{i^2}(\calC)=0$ for each $i\in I_1$,

\item[(d)] we have $\calC=\bigoplus_{\alpha\in Q_{I}} \calC\cap\overline\calC_{\overline\alpha}$.
\end{itemize}

By $(d)$, we get a decomposition $\calC=\bigoplus_{\alpha\in Q_I}\calC_{\alpha}$, where $\calC_\alpha=\calC\cap \overline\calC_{\overline\alpha}$.
For each $i\in I$ we consider the following endofunctors $E_i$ and $F_i$ of $\calC$:
$$
F_i=
\left\{
\begin{array}{ll}
\restr{\overline F_i}{\calC}& \mbox{ if }i\in I_0,\\
\restr{\overline F_{i^2}\overline F_{i^1}}{\calC}& \mbox{ if }i\in I_1,\\
\end{array}
\right.
$$
$$
E_i=
\left\{
\begin{array}{ll}
\restr{\overline E_i}{\calC}& \mbox{ if }i\in I_0,\\
\restr{\overline E_{i^1}\overline E_{i^2}}{\calC}& \mbox{ if }i\in I_1.\\
\end{array}
\right.
$$
As in the notations above we set $F_\ui=F_{i_d}\cdots F_{i_1}$ for any tuple $\ui=(i_1,\cdots,i_d)\in I^d$ and $F_\alpha=\bigoplus_{\ui\in I^\alpha}F_\ui$ for any element $\alpha\in Q_{I}^+$.
Note that we have $F_\ui=\restr{{\overline F_{\phi(\ui)}}}{\calC}$ for each $\ui\in I^\alpha$.

Let $\alpha\in Q^+_I$. We have
$$
F_\alpha=\bigoplus_{\ui\in \overline I^{\overline\alpha}_{\rm ord}} \restr{\overline F_\ui}{\calC}.
$$ The homomorphism $\overline\psi_{\overline\alpha}$ yields a homomorphism $\bfe R_{\overline\alpha,\bfk}\bfe\to \End(F_{\alpha})^{\rm op}$, where $\bfe=\sum_{\ui\in \overline I^{\overline\alpha}_{\rm ord}}e(\ui)$. 

Since the category $\calC$ satisfies $(a)$, $(b)$ and $(c)$, for each almost ordered sequence $\ui=(i_1,\cdots,i_d)\in I^\alpha$ we have $\overline F_{i_d}\cdots \overline F_{i_1}(\calC)=0$. By Lemma \ref{ch3:lem-red_number_idemp}, this implies that the homomorphism $\bfe R_{\overline\alpha,\bfk}\bfe\to \End(F_{\alpha})^{\rm op}$ factors through a homomorphism $S_{\overline\alpha,\bfk}\to \End(F_{\alpha})^{\rm op}$. Let us call it $\overline\psi'_{\overline\alpha}$. Then we can define an algebra homomorphism $\psi_{\alpha}\colon R_{\alpha,\bfk}\to\End(F_\alpha)^{\rm op}$ by setting $\psi_\alpha=\overline\psi'_{\overline\alpha}\circ\Phi_{\alpha,\bfk}$.

Now, Theorem \ref{ch3:thm_KLR-e-e+1} implies the following result.

\smallskip
\begin{thm}
\label{ch3:thm_categ-e-e+1-app}
For each abelian subcategory $\calC\subset\overline\calC$ as above, that satisfies $(a)-(d)$, we have a $\frakg_I$-quasi-categorical representation in $\calC$ given by functors $F_i$ and $E_i$ and the algebra homomorphisms $\psi_\alpha\colon R_{\alpha,\bfk}\to \End(F_\alpha)^{\rm op}$.
\qed
\end{thm}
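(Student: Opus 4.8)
The data of the putative $\frakg_I$-quasi-categorical representation has already been assembled before the statement: the decomposition $\calC=\bigoplus_{\alpha\in Q_I}\calC_\alpha$ comes from (d), the functors $E_i$ and $F_i$ are defined explicitly, and for each $\alpha\in Q^+_I$ we are given the algebra homomorphism $\psi_\alpha=\overline\psi'_{\overline\alpha}\circ\Phi_{\alpha,\bfk}$. So the plan is to check the axioms (1)--(4) and (a)--(c) of the definition; the only non-formal input is Theorem \ref{ch3:thm_KLR-e-e+1}, and everything else is bookkeeping.

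The formal parts come first. Axioms (1) and (4) hold by construction. Axiom (a) is a weight computation: $E_i$ and $F_i$ send $\calC\cap\overline\calC_{\overline\beta}$ into $\calC\cap\overline\calC_{\overline\beta\mp\phi(\alpha_i)}$ (first sign for $E_i$, second for $F_i$) --- for $i\in I_0$ directly, and for $i\in I_1$ because $\overline E_{i^1}\overline E_{i^2}$ and $\overline F_{i^2}\overline F_{i^1}$ change the $Q_{\overline I}$-grading by $\mp(\overline\alpha_{i^1}+\overline\alpha_{i^2})=\mp\phi(\alpha_i)$; since $\calC_\alpha=\calC\cap\overline\calC_{\phi(\alpha)}$ and $\phi$ is additive, (a) follows. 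For (2): because the decomposition $\overline\calC=\bigoplus\overline\calC_{\overline\beta}$ kills all $\Hom$'s between distinct weight spaces, the biadjunction between $\overline E$ and $\overline F$ restricts to componentwise biadjunctions between $\overline E_i$ and $\overline F_i$, $i\in\overline I$; composing the biadjoint pairs $(\overline E_{i^1},\overline F_{i^1})$ and $(\overline E_{i^2},\overline F_{i^2})$ gives a biadjoint pair $(\overline E_{i^1}\overline E_{i^2},\overline F_{i^2}\overline F_{i^1})$ for $i\in I_1$, and the relevant units and counits restrict to the full abelian subcategory $\calC$ because $\calC$ is stable under these functors by (a)--(c); exactness of $E_i,F_i$ on $\calC$ is inherited from $\overline\calC$ since $\calC$ is an abelian subcategory. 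For the nilpotency (c), set $x:=\psi_1(x_1)\colon F\to F$; unwinding the $|\alpha|=1$ case of the construction (and using Remark \ref{ch3:rem_xa=xa+1}, which identifies in $S_{\overline\alpha,\bfk}$ the dot on the $i^1$-strand with the dot on the $i^2$-strand) one finds $x|_{F_i}=\overline x|_{F_i}$ for $i\in I_0$ and $x|_{F_i}=(\overline F_{i^2}\,\overline x)|_\calC$ for $i\in I_1$; hence for $M\in\calC$ the induced endomorphism of $F(M)$ is, on each summand, either $\overline x$ on $\overline F_i(M)$ or $\overline F_{i^2}$ applied to the endomorphism $\overline x$ of $\overline F_{i^1}(M)$, so it is nilpotent by nilpotency of $\overline x$ and additivity of $\overline F_{i^2}$.

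The substance is axiom (b). For each $d$ put $\psi_d:=\bigoplus_{|\alpha|=d}\psi_\alpha\colon R_{d,\bfk}(\Gamma)\to\End(F^d)^{\rm op}$; this is an algebra homomorphism because each $\psi_\alpha$ is a composite of the algebra isomorphism $\Phi_{\alpha,\bfk}$ of Theorem \ref{ch3:thm_KLR-e-e+1} with the algebra homomorphism $\overline\psi'_{\overline\alpha}$. Set $x:=\psi_1(x_1)$ and $\tau:=\psi_2(\tau_1)$. That $\psi_d(e(\ui))$ is the projector onto $F_{i_d}\cdots F_{i_1}$ follows from $\Phi_{\alpha,\bfk}(e(\ui))=e(\phi(\ui))$, the fact that $\phi$ restricts to a bijection $I^\alpha\to\overline I^{\overline\alpha}_{\rm ord}$, the identification $F_\ui=\overline F_{\phi(\ui)}|_\calC$, and the analogous statement for $\overline\psi_{\overline\alpha}$. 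It remains to prove $\psi_d(x_r)=F^{d-r}xF^{r-1}$ and $\psi_d(\tau_r)=F^{d-r-1}\tau F^{r-1}$, which one checks on each summand $F_\ui$. By the formulas defining $\Phi_{\alpha,\bfk}$, the element $x_re(\ui)$ maps to $x_{r'}e(\phi(\ui))$ and $\tau_re(\ui)$ maps to a fixed word in $\tau_{r'},\tau_{r'+1},\tau_{r'+2}$ (with a sign in one case) depending only on the pair $(i_r,i_{r+1})$, times $e(\phi(\ui))$, where $r'=r'_\ui$ is as in Section \ref{ch3:subs_comp-pol-reps}. Now $\overline\psi_{\overline\alpha}$ sends $x_s\mapsto\overline F^{\overline d-s}\overline x\overline F^{s-1}$ and $\tau_s\mapsto\overline F^{\overline d-s-1}\overline\tau\overline F^{s-1}$, so it turns the above into $\overline x$ (resp. the same word in $\overline\tau$), whiskered on the right by $\overline F_{\phi(i_1,\dots,i_{r-1})}$ and on the left by $\overline F_{\phi(i_{r+1},\dots,i_d)}$; restricting to $\calC$, the right whisker becomes $F_{i_{r-1}}\cdots F_{i_1}$, the left whisker becomes $F_{i_d}\cdots F_{i_{r+1}}$ (resp. $F_{i_d}\cdots F_{i_{r+2}}$ --- the two extra $\overline F$'s cut off by the block get absorbed into the central factor), and the central factor is $x|_{F_{i_r}}$ (resp. $\psi_2(\tau_1e(i_r,i_{r+1}))=\tau|_{F_{i_{r+1}}F_{i_r}}$). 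This is exactly $(F^{d-r}xF^{r-1})|_{F_\ui}$ (resp. $(F^{d-r-1}\tau F^{r-1})|_{F_\ui}$), which proves (b).

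I expect the main obstacle to be this index bookkeeping, in particular two points that must be handled with care. First, throughout we work modulo the balanced quotient $S_{\overline\alpha,\bfk}$ rather than with $R_{\overline\alpha,\bfk}$ itself: this is legitimate because hypotheses (a)--(c) on $\calC$ force $\overline F_\ui$ to annihilate $\calC$ for every almost ordered $\ui\in I^\alpha$, so by Lemma \ref{ch3:lem-red_number_idemp} the homomorphism $\bfe R_{\overline\alpha,\bfk}\bfe\to\End(F_\alpha)^{\rm op}$ induced by $\overline\psi_{\overline\alpha}$ does factor through $S_{\overline\alpha,\bfk}$, producing $\overline\psi'_{\overline\alpha}$. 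Second, the ambiguity in placing a dot on an $I_1$-strand, together with the choices of reduced words hidden in the formulas for $\tau^*_r$, are harmless: the dot ambiguity is resolved by the relation $x_ae(\ui)=x_{a+1}e(\ui)$ in $S_{\overline\alpha,\bfk}$ (Remark \ref{ch3:rem_xa=xa+1}), and more generally all such choices are absorbed because $\Phi_{\alpha,\bfk}$ is a well-defined algebra isomorphism (Theorem \ref{ch3:thm_KLR-e-e+1}); the delicate case $i_r=i_{r+1}\in I_1$ of the $\tau$-identity, with its sign, is merely the categorical shadow of the polynomial computation already carried out in Lemma \ref{ch3:lem_comp-operators}, so no fresh computation is needed here.
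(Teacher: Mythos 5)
Your proposal is correct and follows exactly the route the paper intends: the paper states this theorem with \qed and no written proof, indicating only that it ``follows from Theorem \ref{ch3:thm_KLR-e-e+1}'' together with the constructions ($\psi_\alpha=\overline\psi'_{\overline\alpha}\circ\Phi_{\alpha,\bfk}$, the factorization through $S_{\overline\alpha,\bfk}$ via Lemma \ref{ch3:lem-red_number_idemp}, Remark \ref{ch3:rem_xa=xa+1}) laid out immediately before the statement. Your argument is simply the explicit verification of the quasi-categorical-representation axioms that the paper leaves as routine, carried out with the same tools and the same decomposition $\psi_d=\bigoplus_{|\alpha|=d}\psi_\alpha$; there is no divergence of method and no gap.
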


\smallskip
\begin{rk}
Assume that the category $\overline\calC$ is such that we have $\overline\calC_{\overline\alpha}=0$ whenever $\overline\alpha=\sum_{i\in \overline I}d_i\overline\alpha_i\in Q_{\overline I}$ is such that $d_{i^1}<d_{i^2}$ for some $i\in I_1$. In this case the subcategory $\calC\subset \overline\calC$ defined by $\calC=\bigoplus_{\alpha\in Q_I}\overline\calC_{\overline\alpha}$ satisfies conditions $(a)-(d)$.
\end{rk}

\bigskip
\bigskip
\appendix
\centerline{\bf\Large Appendix}
\section{The geometric construction of the isomorphism $\Phi$}
\label{ch3:app-geom_constr}
The goal of this section is to give a geometric construction of the isomorphism $\Phi$ in Theorem \ref{ch3:thm_KLR-e-e+1}.

\renewcommand{\thesubsection}{\thesection\arabic{subsection}}

\subsection{The geometric construction of the KLR algebra}
Let $\bfk$ be a field. Let $\Gamma=(I,H)$ be a quiver without $1$-loops. See Section \ref{ch3:subs_KM-quiv} for the notations related to quivers. For an arrow $h\in H$
we will write $h'$ and $h''$ for its source and target respectively. Fix $\alpha=\sum_{i\in I}d_i\alpha_i\in Q^+_I$ and set $d=|\alpha|$. Set also 
$$
E_\alpha=\bigoplus_{h\in H}{\rm Hom}(V_{h'},V_{h''}),\qquad V_i=\bbC^{d_i},\qquad V=\bigoplus_{i\in I}V_i.
$$
The group $G_\alpha=\prod_{i\in I}GL(V_i)$ acts on $E_\alpha$ by base changes.

Set 
$$
I^\alpha=\{\ui=(i_1,\cdots,i_d)\in I^d;~\sum_{r=1}^d \alpha_{i_r}=\alpha\}.
$$
We denote by $F_\ui$ the variety of all flags 
$$
\phi=(V=V^0\supset V^1\supset\cdots\supset V^d=\{0\})
$$
in $V$ that are homogeneous with respect to the decomposition $V=\bigoplus_{i\in I}V_i$ and such that the $I$-graded vector space
$V^{r-1}/V^{r}$ has graded dimension $i_r$ for $r\in [1,d]$.
We denote by $\widetilde{F}_{\ui}$ the variety of pairs $(x,\phi)\in
E_\alpha\times F_\ui$ such that $x$ preserves $\phi$, i.e., we have $x(V^r)\subset V^r$ for
$r\in\{0,1,\cdots,m\}$. Let $\pi_{\ui}$ be the natural projection from
$\widetilde{F}_{\ui}$ to $E_\alpha$, i.e., $\pi_{\ui}:\widetilde{F}_\ui\to E_\alpha,~(x,\phi)\mapsto x. $
For $\ui,\uj\in I^\alpha$ we denote by $Z_{\ui,\uj}$ the variety of
triples $(x,\phi_1,\phi_2)\in E_\alpha\times F_{\ui}\times F_{\uj}$ such
that $x$ preserves $\phi_1$ and $\phi_2$ (i.e., we have $Z_{\ui,\uj}=\widetilde F_\ui\times_{E_\alpha}\widetilde F_\uj$). Set 
$$
Z_{\alpha}=\coprod_{\ui,\uj\in I^\alpha}Z_{\ui,\uj}, \qquad \widetilde F_\alpha=\coprod_{\ui\in I^\alpha} \widetilde F_\ui.
$$
We have an algebra structure on $H_*^{G_\alpha}(Z_\alpha,\bfk)$ such that the multiplication is the convolution product with respect to the inclusion $Z_{\alpha}\subset \widetilde F_{\alpha}\times \widetilde F_{\alpha}$. Here $H_*^{G_\alpha}(\bullet,\bfk)$ denotes the $G_\alpha$-equivariant Borel-Moore homology with coefficients in $\bfk$.  See \cite[Sec.~2.7]{CG} for the definition of the convolution product. 

The following result is proved by Rouquier \cite{Rouq-2KM} and by Varagnolo-Vasserot \cite{VV} in the situation ${\rm char}~\bfk=0$. See \cite{Mak-parity} for the proof over an arbitrary field.

\smallskip
\begin{prop}
\label{ch3:prop-geom_KLR}
There is an algebra isomorphism $R_{\alpha,\bfk}(\Gamma)\simeq H^{G_\alpha}_*(Z_\alpha,\bfk)$. Moreover, for each $\ui,\uj\in I^\alpha$, the vector subspace $e(\ui)R_{\alpha,\bfk}(\Gamma)e(\uj)\subset R_{\alpha,\bfk}(\Gamma)$ corresponds to the vector subspace $H_*^{G_\alpha}(Z_{\ui,\uj},\bfk)\subset H_*^{G_\alpha}(Z_{\alpha},\bfk)$.

\qed
\end{prop}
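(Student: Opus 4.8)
The plan is to construct an explicit algebra homomorphism $\Theta_\alpha\colon R_{\alpha,\bfk}(\Gamma)\to H_*^{G_\alpha}(Z_\alpha,\bfk)$, to prove that it is injective by exhibiting a common faithful module, and to prove that it is surjective by matching $\bfk[x_1,\dots,x_d]$-bases; the asserted compatibility with the two block decompositions will then be automatic. The geometric input is that $F_\ui$ is isomorphic to the full flag variety $G_\alpha/B$ for a Borel $B\subset G_\alpha$ (the sequence $\ui$ only fixes the identification), so that $\widetilde F_\ui\to F_\ui$ is a $G_\alpha$-equivariant vector bundle, its fibre over a flag $\phi$ being the linear space of $x\in E_\alpha$ preserving $\phi$. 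Since $G_\alpha/B$ is equivariantly formal over any field, one obtains $H_*^{G_\alpha}(\widetilde F_\ui,\bfk)\simeq H^*_{G_\alpha}(G_\alpha/B,\bfk)=H^*_B(\mathrm{pt},\bfk)=\bfk[x_1,\dots,x_d]$, up to a degree shift, where $x_r$ is the $G_\alpha$-equivariant first Chern class of the tautological line $V^{r-1}/V^r$ on $\widetilde F_\ui$. Summing over $\ui\in I^\alpha$ recovers exactly the space $\bfk^{(I)}_d$ of Proposition~\ref{ch3:prop_faith-rep-KLR}, and, since each $\pi_\ui$ is proper, the inclusion $Z_\alpha\subset\widetilde F_\alpha\times\widetilde F_\alpha$ makes $\bfk^{(I)}_d$ a module over the convolution algebra $H_*^{G_\alpha}(Z_\alpha,\bfk)$. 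One first checks that this module is \emph{faithful}; this is the standard cellular-fibration argument (cf.\ \cite[Sec.~2.7]{CG}) and is the geometric counterpart of the faithfulness statements used in the proof of Theorem~\ref{ch3:thm_KLR-e-e+1}.

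Next I would write down the images of the generators and check that they act on $\bfk^{(I)}_d$ exactly by the formulas of Proposition~\ref{ch3:prop_faith-rep-KLR}. Put $\Theta_\alpha(e(\ui))=[\Delta\widetilde F_\ui]$, the class of the diagonal in $Z_{\ui,\ui}$, which is the idempotent cutting out $H_*^{G_\alpha}(Z_{\ui,\ui},\bfk)$ and acts by the projection to $\bfk_d[x]e(\ui)$; let $\Theta_\alpha(x_r)$ be the $\cap$-product with the Chern class $x_r$ on $\Delta\widetilde F_\ui$, which acts by multiplication by $x_r$; and let $\Theta_\alpha(\tau_r)$ be a suitably normalised fundamental class of the subvariety of $Z_{\ui,s_r(\ui)}$ formed by pairs of flags that coincide outside their $r$-th step. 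A local computation shows that $\Theta_\alpha(\tau_r)$ acts exactly by formula~(\ref{ch3:eq_action-on-polyn}): in the case $i_r=i_{r+1}$ a $\bbP^1$-bundle produces the Demazure operator $\partial_r$, while in the case $i_r\ne i_{r+1}$ an Euler class produces multiplication by $P_{i_r,i_{r+1}}(x_{r+1},x_r)$ followed by $s_r$. Since these geometric operators act on $\bfk^{(I)}_d$ literally by the $R_{\alpha,\bfk}$-action, every defining relation of $R_{\alpha,\bfk}$ holds among them as operators on $\bfk^{(I)}_d$, hence already in $H_*^{G_\alpha}(Z_\alpha,\bfk)$ by faithfulness of the module. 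Therefore $\Theta_\alpha$ is a well-defined algebra homomorphism, and it is injective because $\bfk^{(I)}_d$ is a faithful $R_{\alpha,\bfk}$-module.

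For surjectivity I would compute $H_*^{G_\alpha}(Z_{\ui,\uj},\bfk)$ directly. Stratifying $F_\ui\times F_\uj$ by $G_\alpha$-orbits, which are indexed by the relative positions realised between flags of types $\ui$ and $\uj$ (a subset of $\frakS_d$), exhibits $Z_{\ui,\uj}$ as a cellular fibration over $E_\alpha$, so that $H_*^{G_\alpha}(Z_{\ui,\uj},\bfk)$ is free over $\bfk[x_1,\dots,x_d]$ with basis the classes $\Lambda_w$ of the orbit closures. On the algebraic side, by Remark~\ref{ch3:rk_basis-KLR} the space $e(\ui)R_{\alpha,\bfk}e(\uj)$ is free over $\bfk[x_1,\dots,x_d]$ on the elements $\tau_w e(\uj)$, indexed by the corresponding set of permutations; moreover, composing the elementary correspondences along a reduced word for $w$ shows that $\Theta_\alpha(\tau_w e(\uj))$ equals $\pm\Lambda_w$ modulo classes supported on strictly smaller strata, so the transition matrix is triangular for the Bruhat order and hence invertible over $\bfk[x_1,\dots,x_d]$. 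Thus $\Theta_\alpha$ sends a $\bfk[x_1,\dots,x_d]$-basis of $e(\ui)R_{\alpha,\bfk}e(\uj)$ to a $\bfk[x_1,\dots,x_d]$-basis of $H_*^{G_\alpha}(Z_{\ui,\uj},\bfk)$, so it restricts to an isomorphism $e(\ui)R_{\alpha,\bfk}e(\uj)\simeq H_*^{G_\alpha}(Z_{\ui,\uj},\bfk)$; summing over $\ui,\uj\in I^\alpha$ gives the isomorphism of the proposition together with the compatibility statement. The main obstacle is exactly this last step: one must justify the cellular fibration structure on $Z_{\ui,\uj}$ and, crucially over a field of arbitrary characteristic, show that the associated long exact sequences in equivariant Borel--Moore homology split, so that one really obtains a free $\bfk[x_1,\dots,x_d]$-module of the expected rank. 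Over $\bbC$ this is the weight/purity argument of \cite{VV,Rouq-2KM}; in general it follows from the parity of the pushforward complexes along $\pi_\ui\times\pi_\uj$, as in \cite{Mak-parity}.
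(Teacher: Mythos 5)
The paper itself does not prove this proposition; it simply cites \cite{Rouq-2KM} and \cite{VV} (for $\mathrm{char}\,\bfk=0$) and \cite{Mak-parity} (for arbitrary characteristic), so there is no "paper's own proof" to diverge from. Your sketch faithfully reconstructs the argument from those references: identify $H_*^{G_\alpha}(\widetilde F_\alpha,\bfk)$ with the polynomial space $\bfk_d^{(I)}$, define $\Theta_\alpha$ on the KLR generators by diagonal classes, equivariant Chern classes, and Bott--Samelson-type correspondence classes, match the induced operators with the formulas of Proposition~\ref{ch3:prop_faith-rep-KLR} to get a well-defined injection, and conclude surjectivity by a cellular-fibration/triangularity argument over $\bfk[x_1,\dots,x_d]$. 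You correctly pinpoint the one genuinely delicate step when $\bfk$ has positive characteristic---the degeneration of the relevant spectral sequences, which over $\bbC$ follows from purity but in general needs the parity-sheaf argument of \cite{Mak-parity}---so the proposal is sound and is the same route the cited sources take.
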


\subsection{The geometric construction of the isomorphism $\Phi$}
As in Section \ref{ch3:subs_not-quiv-I-Ibar}, fix a decomposition $I=I_0\coprod I_1$ and consider the quiver $\overline\Gamma=(\overline I,\overline H)$; also fix $\alpha\in Q^+_I$ and consider $\overline\alpha=\phi(\alpha)\in Q^+_{\overline I}$. 

We start from the variety $Z_{\overline\alpha}$ defined with respect to the quiver $\overline\Gamma$. By Proposition \ref{ch3:prop-geom_KLR}, we have an algebra isomorphism $R_{\overline\alpha,\bfk}(\overline\Gamma)\simeq H_*^{G_{\overline\alpha}}(Z_{\overline\alpha},\bfk)$.
We have an obvious projection $p\colon Z_{\overline\alpha}\to E_{\overline\alpha}$ defined by $(x,\phi_1,\phi_2)\mapsto x$. For each $i\in I_1$ denote by $h_i$ the unique arrow in $\overline \Gamma$ that goes from $i^1$ to $i^2$. Consider the following open subset of $E_{\overline\alpha}$: $E^0_{\overline\alpha}=\{x\in E_{\overline\alpha};~ x_{h_i} \mbox{ is invertible } \forall i\in I_1\}$. Set $Z^0_{\overline\alpha}=p^{-1}(E^0_{\overline\alpha})$. The pullback with respect to the inclusion $Z^0_{\overline\alpha}\subset Z_{\overline\alpha}$ yields an algebra homomorphism $H_*^{G_{\overline\alpha}}(Z_{\overline\alpha},\bfk)\to H_*^{G_{\overline\alpha}}(Z^0_{\overline\alpha},\bfk)$ (see \cite[Lem.~2.7.46]{CG}).

\smallskip
\begin{rk}
\label{ch3:rk-unord-idemp-in-kernel}
If the sequence $\ui\in \overline I^{\overline\alpha}$ is unordered, then a flag from $F_\ui$ is never preserved by an element from $E^0_{\overline\alpha}$. This implies that $Z_{\ui,\uj}\cap Z^0_{\alpha}=\emptyset$ if $\ui$ or $\uj$ is unordered. Thus for each $\ui\in \overline I^{\overline\alpha}_{\rm un}$, the idempotent $e(\ui)$ is in the kernel of the homomorphism $H_*^{G_{\overline\alpha}}(Z_{\overline\alpha},\bfk)\to H_*^{G_{\overline\alpha}}(Z^0_{\overline\alpha},\bfk)$.
\end{rk}

\smallskip
Let $\bfe$ be the idempotent as in Definition \ref{ch3:def_bal-KLR}. Consider the following subset of $Z_{\overline\alpha}$:
$$
Z'_{\overline\alpha}=\coprod_{\ui,\uj\in \overline I^{\overline\alpha}_{\rm ord}}Z_{\ui,\uj}.
$$
The algebra isomorphism $R_{\overline\alpha,\bfk}(\overline\Gamma)\simeq H_*^{G_{\overline\alpha}}(Z_{\overline\alpha},\bfk)$ above restricts to an algebra isomorphism $\bfe R_{\overline\alpha}(\overline\Gamma)\bfe\simeq H_*^{G_{\overline\alpha}}(Z'_{\overline\alpha},\bfk)$.

Now, set $Z'^0_{\alpha}=Z'_{\alpha}\cap Z^0_{\alpha}$. Similarly to the construction above, we have an algebra homomorphism $H_*^{G_{\overline\alpha}}(Z'_{\overline\alpha},\bfk)\to H_*^{G_{\overline\alpha}}(Z'^0_{\overline\alpha},\bfk)$. By Remark \ref{ch3:rk-unord-idemp-in-kernel}, the kernel of this homomorphism contains the kernel of $\bfe R_{\overline\alpha,\bfk}(\overline\Gamma)\bfe\to R_{\alpha,\bfk}(\Gamma)$ (see Theorem \ref{ch3:thm_KLR-e-e+1}). The following result implies that these kernels are the same.

\smallskip
\begin{lem}
\label{ch3:lem-isom-Stein-Strein0}
We have the following algebra isomorphism $R_{\alpha,\bfk}(\Gamma)\simeq H_*^{G_{\overline\alpha}}(Z'^0_{\overline\alpha},\bfk)$.
\end{lem}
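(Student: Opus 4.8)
The idea is to identify the geometry governing $R_{\alpha,\bfk}(\Gamma)$ with the geometry governing $H_*^{G_{\overline\alpha}}(Z'^0_{\overline\alpha},\bfk)$ directly, rather than going through the quotient $S_{\overline\alpha,\bfk}$. So I would first set up, for the quiver $\Gamma$ and the dimension vector $\alpha$, the representation variety $E_\alpha$, the flag varieties $F_\ui$ for $\ui\in I^\alpha$, the Steinberg-type variety $Z_\alpha=\coprod Z_{\ui,\uj}$, and recall from Proposition \ref{ch3:prop-geom_KLR} the isomorphism $R_{\alpha,\bfk}(\Gamma)\simeq H_*^{G_\alpha}(Z_\alpha,\bfk)$. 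Then I would construct a $G_{\overline\alpha}$-equivariant isomorphism of varieties $Z'^0_{\overline\alpha}\simeq Z_\alpha$ (more precisely, $Z'^0_{\overline\alpha}$ fibers $G_{\overline\alpha}$-equivariantly over $Z_\alpha$ with contractible affine-space fibers, in a way compatible with the group reduction $G_{\overline\alpha}\rightsquigarrow G_\alpha$), and then invoke functoriality of equivariant Borel--Moore homology together with the convolution formalism of \cite[Sec.~2.7]{CG} to conclude that this geometric correspondence induces an algebra isomorphism on homology.

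The key geometric observation is the following. A point of $E^0_{\overline\alpha}$ is a representation of $\overline\Gamma$ in which every arrow $x_{h_i}\colon V_{i^1}\to V_{i^2}$ ($i\in I_1$) is an isomorphism. Since by construction $\dim V_{i^1}=\dim V_{i^2}=d_i$, such a representation is the same data as a representation of $\Gamma$ together with a choice of isomorphism $x_{h_i}$ for each $i\in I_1$: concretely, one identifies $V_{i^2}$ with $V_{i^1}$ via $x_{h_i}^{-1}$ and transports all the structure, and the arrows entering $i^1$ and leaving $i^2$ assemble into the arrows at the vertex $i$ of $\Gamma$. Thus $E^0_{\overline\alpha}$ is a trivial $\prod_{i\in I_1}GL(V_{i^1})$-torsor over $E_\alpha$, and $G_{\overline\alpha}=G_\alpha\times\prod_{i\in I_1}GL(V_{i^2})$ acts so that the second factor acts simply transitively on the fibers. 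On the flag side, a flag in $F_\ui$ with $\ui\in\overline I^{\overline\alpha}_{\rm ord}$ that is preserved by $x\in E^0_{\overline\alpha}$ must, at each consecutive pair $(i^1,i^2)$, use a subspace that is carried isomorphically by $x_{h_i}$ onto the corresponding quotient; this pins down the "$i^2$-step" of the flag in terms of the "$i^1$-step", so that a preserved well-ordered flag over $E^0_{\overline\alpha}$ is exactly the data of a flag in the corresponding $F_{\uj}$ for $\Gamma$ (where $\phi(\uj)=\ui$). Matching these identifications on the level of triples $(x,\phi_1,\phi_2)$ gives the desired equivariant isomorphism $Z'^0_{\overline\alpha}\xrightarrow{\sim} (\text{torsor over }Z_\alpha)$, with the torsor structure group the contractible (in the equivariant sense, via the reduction of structure group) $\prod_{i\in I_1}GL(V_{i^2})$.

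Having set this up, I would conclude as follows: equivariant Borel--Moore homology is insensitive to passing to a principal bundle for a group acting with that group as structure group — more precisely, if $P\to X$ is a $G_\alpha$-equivariant principal $K$-bundle with $K=\prod_{i\in I_1}GL(V_{i^2})$ and $G_{\overline\alpha}=G_\alpha\times K$ acts on $P$ with $K$ acting along the fibers, then $H_*^{G_{\overline\alpha}}(P,\bfk)\simeq H_*^{G_\alpha}(X,\bfk)$, and this is compatible with convolution (the Steinberg variety of $\overline\Gamma$, restricted over $E^0_{\overline\alpha}$, is the corresponding $K$-bundle over the Steinberg variety of $\Gamma$, and the convolution diagrams match fiberwise). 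Applying this with $X=Z_\alpha$ and combining with Proposition \ref{ch3:prop-geom_KLR} for both quivers yields the algebra isomorphism $R_{\alpha,\bfk}(\Gamma)\simeq H_*^{G_{\overline\alpha}}(Z'^0_{\overline\alpha},\bfk)$. One should also check that, under this chain of isomorphisms, the idempotent $e(\uj)$ of $R_{\alpha,\bfk}(\Gamma)$ goes to the class of $Z_{\phi(\uj),\phi(\uj)}\cap Z^0_{\overline\alpha}$, so that the isomorphism is compatible with the one of Theorem \ref{ch3:thm_KLR-e-e+1}; this is immediate from tracking the component $Z_{\ui,\uj}$ in Proposition \ref{ch3:prop-geom_KLR} through the bundle identification.

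\textbf{Main obstacle.} The routine-looking but genuinely delicate point is verifying that the equivariant-homology equivalence $H_*^{G_{\overline\alpha}}(P,\bfk)\simeq H_*^{G_\alpha}(X,\bfk)$ is an \emph{algebra} isomorphism for the convolution products, i.e.\ that the fiber bundle identification intertwines the pushforward–pullback along the convolution diagrams on $Z'^0_{\overline\alpha}\times Z'^0_{\overline\alpha}$ and on $Z_\alpha\times Z_\alpha$. This requires care with the dimension shifts (the fibers of $P\to X$ are affine spaces of a fixed dimension, which shifts Borel--Moore degrees by a constant, and one must check these constants cancel in the convolution) and with the open-restriction map of Remark \ref{ch3:rk-unord-idemp-in-kernel} being compatible with convolution, which is \cite[Lem.~2.7.46]{CG}. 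Everything else — the explicit linear-algebra identifications of $E^0_{\overline\alpha}$, the preserved well-ordered flags, and the triples — is elementary, and the equivariant homology of an affine bundle is standard (homotopy invariance plus the Thom isomorphism, or the reduction-of-structure-group argument).
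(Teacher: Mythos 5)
Your overall plan matches the paper's proof: realize $Z'^0_{\overline\alpha}$ as a principal bundle over (something identified with) $Z_\alpha$, reduce the equivariance group, and plug into Proposition~\ref{ch3:prop-geom_KLR}. Concretely, the paper sets $G^{\rm bis}_\alpha=\prod_{i\in I_1}GL(V_{i^2})$, observes that it acts freely on $E^0_{\overline\alpha}$ with a global slice $E^X_{\overline\alpha}$ (where every $x_{h_i}$ equals a fixed chosen isomorphism), obtains $Z'^0_{\overline\alpha}/G^{\rm bis}_\alpha\simeq Z'^X_{\overline\alpha}\simeq Z_\alpha$ as $G_\alpha$-varieties, and then uses the chain
$H^{G_{\overline\alpha}}_*(Z'^0_{\overline\alpha})\simeq H^{G_{\overline\alpha}/G^{\rm bis}_\alpha}_*(Z'^0_{\overline\alpha}/G^{\rm bis}_\alpha)\simeq H^{G_\alpha}_*(Z_\alpha)\simeq R_{\alpha,\bfk}(\Gamma)$.
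So the geometric idea in your proposal is the right one.

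Two details in your write-up are, however, wrong as stated and would lead you astray if you tried to flesh them out. First, the fibers of $Z'^0_{\overline\alpha}\to Z_\alpha$ are copies of $\prod_{i\in I_1}GL(V_{i^2})$, which is a product of general linear groups, \emph{not} an affine space and \emph{not} contractible; so neither "homotopy invariance" nor the "Thom isomorphism" applies, and there is no degree shift to cancel. The correct mechanism, which you do mention as an alternative, is the isomorphism $H_*^{\widetilde G}(P)\simeq H_*^{\widetilde G/K}(P/K)$ for a normal subgroup $K\trianglelefteq\widetilde G$ acting freely on $P$ (applied with $\widetilde G=G_{\overline\alpha}$, $K=G^{\rm bis}_\alpha$, and $\widetilde G/K\simeq G_\alpha$); you should keep only this version of the argument and delete the affine-bundle language. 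Second, writing "$G_{\overline\alpha}=G_\alpha\times\prod_{i\in I_1}GL(V_{i^2})$" is misleading: the diagonally embedded $G_\alpha$ (which is the copy that matches the $G_\alpha$-action on the slice $E^X_{\overline\alpha}$) does not commute with $\prod GL(V_{i^2})$, so this is not a direct product of \emph{those} subgroups. What is true, and what the proof needs, is that $\prod GL(V_{i^2})$ is a normal direct factor with $G_{\overline\alpha}/\prod GL(V_{i^2})\simeq G_\alpha$. This imprecision does not change the conclusion (the induced action on the quotient is the same), but it should be stated as a normal quotient, not a product decomposition.
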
 
\begin{proof}
For each $i\in I_0$ we identify $V_i\simeq V_{i^0}$. For each $i\in I_1$ we identify $V_i\simeq V_{i^1}\simeq V_{i^2}$. We have a diagonal inclusion $G_\alpha\subset G_{\overline\alpha}$, i.e., the component $GL(V_i)$ of $G_\alpha$ with $i\in I_0$ goes to $GL(V_{i^0})$ and the component $GL(V_i)$ with $i\in I_1$ goes diagonally to $GL(V_{i^1})\times GL(V_{i^2})$. 

Set $G^{\rm bis}_{\alpha}=\prod_{i\in I_1}GL(V_{i^2})\subset G_{\overline\alpha}$. We have an obvious group isomorphism $G_{\overline\alpha}/G_{\alpha}^{\rm bis}\simeq G_{\alpha} $. 

Let us denote by $X$ the choice of isomorphisms $V_{i^1}\simeq V_{i^2}$  mentioned above. Let $E^X_{\overline\alpha}$ be the subset of $E_{\overline\alpha}$ that contains only $x\in E_{\overline\alpha}$ such that for each $i\in I_1$ the component $x_{h_i}$ is the isomorphism chosen in $X$.

The group $G^{\rm bis}_\alpha$ acts freely on $E^0_{\overline\alpha}$ such that each orbit intersects $E^X_{\overline\alpha}$ once. This implies that we have an isomorphism of algebraic varieties $E^0_{\overline\alpha}/G^{\rm bis}_{\alpha}\simeq E^X_{\overline\alpha}$. Now, set $Z'^X_{\overline\alpha}=p^{-1}(E^X_{\overline\alpha})$. The same argument as above yields $Z'^0_{\overline\alpha}/G^{\rm bis}_{\alpha}\simeq Z'^X_{\overline\alpha}$. We get the following chain of algebra isomorphsims
$$
H^{G_{\overline\alpha}}_*(Z'^0_{\overline\alpha},\bfk)\simeq H^{G_{\overline\alpha}/ G^{\rm bis}_\alpha}_*(Z'^0_{\overline\alpha}/G^{\rm bis}_\alpha,\bfk)\simeq H_*^{G_\alpha}(Z'^X_{\overline\alpha},\bfk).
$$

To complete the proof we have to show that the $G_\alpha$-variety $Z'^X_{\overline\alpha}$ is isomorphic to $Z_\alpha$. Each element of $I^{\overline\alpha}_{\rm ord}$ is of the form $\phi(\ui)$ for a unique $\ui\in I^\alpha$, where $\phi$ is as in Section \ref{ch3:subs_not-quiv-I-Ibar}. Let us abbreviate $\ui'=\phi(\ui)$. By definition we have 
$$
Z'_{\overline\alpha}=\coprod_{\ui,\uj\in I^\alpha} Z_{\ui',\uj'}.
$$
Set $Z^X_{\ui',\uj'}=Z_{\ui',\uj'}\cap Z'^X_{\overline\alpha}$. We have an obvious isomorphism of $G_\alpha$-varieties $Z^X_{\ui',\uj'}\simeq Z_{\ui,\uj}$. (Beware, the variety $Z_{\ui,\uj}$ is defined with respect to the quiver $\Gamma$ and the variety $Z_{\ui',\uj'}$ is defined with respect to the quiver $\overline\Gamma$.) Taking the union for all $\ui,\uj\in I^\alpha$ yields an isomorphism of $G_\alpha$-varieties $Z'^X_{\overline\alpha}\simeq Z_\alpha$.  
\end{proof}

\smallskip
\begin{coro}
We have the following commutative diagram. 
$$
\begin{CD}
\bfe R_{\overline\alpha,\bfk}(\overline\Gamma)\bfe @>>> R_{\alpha,\bfk}(\Gamma)\\
@VVV                                 @VVV\\
H_*^{G_{\overline\alpha}}(Z'_{\overline\alpha},\bfk) @>>> H_*^{G_{\overline\alpha}}(Z'^0_{\overline\alpha},\bfk).
\end{CD}
$$
Here the left vertical map is the isomorphism from Proposition \ref{ch3:prop-geom_KLR}, the right vertical map is the isomorphism from Lemma \ref{ch3:lem-isom-Stein-Strein0}, the top horizontal map is obtained from Theorem \ref{ch3:thm_KLR-e-e+1} and the bottom horizontal map is the pullback with respect to the inclusion $Z'^0_{\overline\alpha}\subset Z'_{\overline\alpha}$.
\end{coro}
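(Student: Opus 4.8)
The plan is to reduce the commutativity of the square to a check on a set of algebra generators of $R_{\alpha,\bfk}(\Gamma)$. Both ways around the square are algebra homomorphisms $\bfe R_{\overline\alpha,\bfk}(\overline\Gamma)\bfe\to H_*^{G_{\overline\alpha}}(Z'^0_{\overline\alpha},\bfk)$. As explained just before the statement, the left-then-bottom composite factors as the pullback $\bfe R_{\overline\alpha,\bfk}\bfe\cong H_*^{G_{\overline\alpha}}(Z'_{\overline\alpha},\bfk)\to H_*^{G_{\overline\alpha}}(Z'^0_{\overline\alpha},\bfk)$; since $Z'^0_{\overline\alpha}$ is a union of connected components of $Z^0_{\overline\alpha}$, this is the restriction to $\bfe(-)\bfe$ of the full pullback $R_{\overline\alpha,\bfk}\cong H_*^{G_{\overline\alpha}}(Z_{\overline\alpha},\bfk)\to H_*^{G_{\overline\alpha}}(Z^0_{\overline\alpha},\bfk)$, which is a ring homomorphism sending $e(\uj)$ to $0$ for every unordered $\uj$ by Remark \ref{ch3:rk-unord-idemp-in-kernel}, hence killing the two-sided ideal $\sum_{\uj\in\overline I^{\overline\alpha}_{\rm un}}\bfe R_{\overline\alpha,\bfk}e(\uj)R_{\overline\alpha,\bfk}\bfe$. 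This ideal is the kernel of the quotient $\bfe R_{\overline\alpha,\bfk}\bfe\to S_{\overline\alpha,\bfk}$ and, as $\Phi_{\alpha,\bfk}$ is an isomorphism, it is also the kernel of the top map; the top-then-right composite obviously kills it too. Thus both composites descend to algebra homomorphisms $R_{\alpha,\bfk}(\Gamma)\to H_*^{G_{\overline\alpha}}(Z'^0_{\overline\alpha},\bfk)$, and the descended top-then-right map is by construction the isomorphism of Lemma \ref{ch3:lem-isom-Stein-Strein0}. So it remains to show that the descended left-then-bottom map agrees with it on the generators $e(\ui)$, $x_re(\ui)$, $\tau_re(\ui)$ of $R_{\alpha,\bfk}(\Gamma)$.

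For this I would set up the geometric dictionary explicitly. By Theorem \ref{ch3:thm_KLR-e-e+1} the canonical lifts of $e(\ui)$, $x_re(\ui)$, $\tau_re(\ui)$ to $\bfe R_{\overline\alpha,\bfk}\bfe$ are $e(\phi(\ui))$, $x_{r'}e(\phi(\ui))$ and $\tau^*_re(\phi(\ui))$, where $r'=r'_\ui$ and $\tau^*_r$ is given by the case-by-case formula of Section \ref{ch3:subs_comp-pol-reps}. Under the isomorphism of Proposition \ref{ch3:prop-geom_KLR} these become, respectively: the fundamental class of the diagonal of $\widetilde F_{\phi(\ui)}$; that class capped with $c_1$ of the $r'$-th tautological line bundle; and a convolution product of the standard geometric generators attached to the consecutive positions $r'$, $r'+1$ (and, in the doubled cases, $r'+2$). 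On the other side, under Lemma \ref{ch3:lem-isom-Stein-Strein0} the generators $e(\ui)$, $x_re(\ui)$, $\tau_re(\ui)$ become the analogous diagonal, tautological-Chern-class and elementary-correspondence classes for the quiver $\Gamma$ inside $H_*^{G_\alpha}(Z_\alpha,\bfk)$; and the isomorphism $Z'^X_{\overline\alpha}\simeq Z_\alpha$ used in that proof matches, on the locus where each $x_{h_i}$ is the chosen isomorphism, the $r'$-th step of a flag of type $\phi(\ui)$ with the $r$-th step of the corresponding flag of type $\ui$. With this step-matching, the diagonal of $\widetilde F_{\phi(\ui)}$ restricts to the diagonal of $\widetilde F_\ui$ and the $r'$-th tautological line bundle restricts to the $r$-th one, so the classes of $e(\ui)$ and of $x_re(\ui)$ agree under the two maps.

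The remaining point, and the main obstacle, is the generator $\tau_re(\ui)$, which I would handle by going through the four cases of the formula for $\tau^*_r$ separately: $i_r,i_{r+1}\in I_0$; $i_r\in I_1$, $i_{r+1}\in I_0$ (and its mirror image); $i_r,i_{r+1}\in I_1$ with $i_r\neq i_{r+1}$; and $i_r=i_{r+1}\in I_1$. In each case one restricts the corresponding convolution product of one, two or four elementary correspondences on $Z_{\overline\alpha}$ to $Z'^0_{\overline\alpha}$; there the component flags are forced to be well-ordered and the doubled steps $(i)^1,(i)^2$ are rigidly tied together by the invertibility of $x_{h_i}$, so the composite collapses onto the single elementary correspondence defining $\tau_r$ on $Z_\alpha$. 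The subtle case is $i_r=i_{r+1}\in I_1$: there one must match the convolution of the four correspondences $\tau_{r'+1}\tau_{r'+2}\tau_{r'}\tau_{r'+1}$, after restriction, with the self-intersection correspondence defining $\tau_r$ for equal colours, and the sign and the Demazure-type behaviour appearing in the formula for $\tau^*_r$ come out — exactly as in Remark \ref{ch3:rem_xa=xa+1} and the proof of Lemma \ref{ch3:lem_comp-operators} — from the identity $x_{r'}e(\phi(\ui))=x_{r'+1}e(\phi(\ui))$ valid on $Z'^0_{\overline\alpha}$ (geometrically, equality of the two tautological Chern classes) together with $\tau_{r'}^2e(\phi(\ui))=(x_{r'+1}-x_{r'})e(\phi(\ui))$, keeping track of the Euler-class contribution that produces the sign. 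Alternatively — and this is probably the cleanest way to organize all of the $\tau$-cases at once — one transports everything to the polynomial representations: $H_*^{G_{\overline\alpha}}(\widetilde F_{\overline\alpha},\bfk)$ together with the open restriction to the $x_{h_i}$-invertible locus is the geometric avatar of $\bfk^{(\overline I)}_{\overline\alpha,\mathrm{ord}}/J_{\overline\alpha,\mathrm{ord}}$, on which $\tau^*_r$ acts by $\tau_r$ by Lemma \ref{ch3:lem_comp-operators}; since that representation is faithful by Lemma \ref{ch3:lem_pol-rep-of-S-defined+faith}, agreement of the two homomorphisms on it forces their agreement, which finishes the proof.
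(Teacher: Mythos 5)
Your argument is correct and follows essentially the same route as the paper's proof: reduce the commutativity to the standard generators $e(\ui)$, $x_re(\ui)$, $\tau_re(\ui)$ and match the two composites via the geometric dictionary (fundamental class of the diagonal, Chern class of the tautological bundle, elementary correspondence class), which is exactly what the paper invokes, just more tersely. You fill in genuinely useful detail the paper omits — in particular the careful check that both composites descend to $R_{\alpha,\bfk}(\Gamma)$, and the observation that one can close the argument via faithfulness of the localized polynomial representation (Lemma \ref{ch3:lem_pol-rep-of-S-defined+faith}) rather than case-checking the four $\tau^*_r$ shapes.
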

\begin{proof}
The result follows directly from Lemma \ref{ch3:lem-isom-Stein-Strein0}. The commutativity of the diagram is easy to see on the generators of $R_{\overline\alpha,\bfk}(\overline\Gamma)$.

Indeed, the isomorphism $R_{\alpha,\bfk}\simeq H_*^{G_{\alpha}}(Z_{\alpha},\bfk)$ is defined in the following way (see \cite[Sec.~2.9,~Thm.~2.4]{Mak-parity} for more details). The element $e(\ui)$ corresponds to the fundamental class $[Z_{\ui,\ui}]$. The element $x_re(\ui)$ corresponds to the first Chern class of some line bundle on $Z_{\ui,\ui}$. The element $\psi_re(\ui)$ corresponds to the fundamental class of some correspondence in $Z_{s_{r}(\ui),\ui}$. The commutativity of the diagram in the statement follows from standard properties of Chern classes and fundamental classes.
\end{proof}

\section{A local ring version in type A}
\label{ch3:app-local_ring}

In this appendix we give some versions of the main results of the paper (Theorems \ref{ch3:thm_KLR-e-e+1} and \ref{ch3:thm_categ-e-e+1}) over a local ring. These ring versions are interesting because the study of the category $\calO$ for $\widehat{\mathfrak{gl}}_N$ in \cite{Mak-Zuck} uses a deformation argument. For this we need a version of Theorem \ref{ch3:thm-isom_KLR_e_e+1_intro} over a local ring.

It is known that the affine Hecke algebra over a field is related with the KLR algebra (see Propositions \ref{ch3:prop-isom_Hekce-KLR-loc}, \ref{ch3:prop-isom_Hekce-KLR-widehat}). This allows to reformulate the definition of a categorical representation (see Definition \ref{ch3:def-categ_action-KLR}) that is given in term of KLR algebras in an equivalent way in terms of Hecke algebras (see Definition \ref{ch3:def-categ_action-Hecke}). The main difficulty is that there is no known relation between Hecke and KLR algebras over a ring. Over a local ring, we can give a definition of a categorical representation using the Hecke algebra (see Definition \ref{ch3:def-categ_rep_R}). But we have no equivalent definition in terms of KLR algebras. That is why, Proposition \ref{ch3:prop_morph-Phi-over-ring}, that is a ring analogue of Theorem \ref{ch3:thm_KLR-e-e+1}, is formulated in terms of Hecke algebras and not in terms of KLR algebras.

\subsection{Intertwining operators}

The center of the algebra $R_{\alpha,\bfk}$ is the ring of symmetric polynomials $\bfk_d[x]^{\frakS_d}$, see \cite[Prop.~3.9]{Rouq-2KM}. Thus $S_{\overline\alpha,\bfk}$ is a $\bfk_d[x]^{\frakS_d}$-algebra under the isomorphism $\Phi_{\alpha,\bfk}$ in Section \ref{ch3:subs-isom_Phi}. Let $\Sigma$ be the polynomial $\prod_{a<b}(x_a-x_b)^2\in\bfk_d[x]^{\frakS_d}$. Let $R_{\alpha,\bfk}[\Sigma^{-1}]$ and $S_{\overline\alpha,\bfk}[\Sigma^{-1}]$ be the rings of quotients of $R_{\alpha,\bfk}$ and $S_{\overline\alpha,\bfk}$ obtained by inverting $\Sigma$. We can extend the isomorphism $\Phi_{\alpha,\bfk}$ from Theorem \ref{ch3:thm_KLR-e-e+1} to an algebra isomorphism 
$$\Phi_{\alpha,\bfk}\colon R_{\alpha,\bfk}[\Sigma^{-1}]\to S_{\overline\alpha,\bfk}[\Sigma^{-1}].$$

Assume that the connected components of the quiver $\Gamma$ are of the form $\Gamma_a$ for $a\in\bbN$, $a>1$ or $a=\infty$. (The quiver $\Gamma_a$ is defined in Section \ref{ch3:subs-typeA_quiv}.)

Note that there is an action of the symmetric group $\frakS_d$ on $\bfk^{(I)}_{d}$ permuting the variables and the components of $\ui$.
Consider the following element in $R_{\alpha,\bfk}[\Sigma^{-1}]$:
$$
\Psi_re(\ui)=
\left\{\begin{array}{ll}
((x_r-x_{r+1})\tau_r+1)e(\ui)&\mbox{ if } i_{r+1}=i_{r},\\
-(x_r-x_{r+1})^{-1}\tau_r e(\ui)&\mbox{ if } i_{r+1}=i_{r}-1,\\
\tau_r e(\ui)&\mbox{ else}.\\
\end{array}\right.
$$
The element $\Psi_re(\ui)$ is called \emph{intertwining operator}. Using the formulas (\ref{ch3:eq_action-on-polyn}) we can check that $\Psi_re(\ui)$ still acts on the polynomial representation and the corresponding operator is equal to $s_re(\ui)$. Note also that $\widetilde\Psi_r=(x_r-x_{r+1})\Psi_r$ is an element of $R_{\alpha,\bfk}$.

\smallskip
\begin{lem}
\label{ch3:lem_morph-Phi-intert-op}
The images of intertwining operators by $\Phi_{\alpha,\bfk}\colon R_{\alpha,\bfk}\to S_{\overline\alpha,\bfk}$ can be described in the following way.
For $\ui\in I^\alpha$ such that $i_r-1\ne i_{r+1}$ we have
$$
\Phi_{\alpha,\bfk}(\Psi_re(\ui))=
\left\{\begin{array}{ll}
\Psi_{r'}e(\phi(\ui)),& \mbox{ if }i_r,i_{r+1}\in I_0,\\
\Psi_{r'}\Psi_{r'+1}e(\phi(\ui))& \mbox{ if }i_r\in I_1,i_{r+1}\in I_0,\\
\Psi_{r'+1}\Psi_{r'}e(\phi(\ui))& \mbox{ if }i_r\in I_0,i_{r+1}\in I_1,\\
\Psi_{r'+1}\Psi_{r'+2}\Psi_{r'}\Psi_{r'+1}e(\phi(\ui))& \mbox{ if }i_r,i_{r+1}\in I_1.\\
\end{array}\right.
$$
For $\ui\in I^\alpha$ such that $i_r-1= i_{r+1}$ we have
$$
\Phi_{\alpha,\bfk}(\widetilde\Psi_re(\ui))=
\left\{\begin{array}{ll}
\widetilde\Psi_{r'}e(\phi(\ui)),& \mbox{ if }i_r,i_{r+1}\in I_0,\\
\widetilde\Psi_{r'}\Psi_{r'+1}e(\phi(\ui))& \mbox{ if }i_r\in I_1,i_{r+1}\in I_0,\\
\Psi_{r'+1}\widetilde\Psi_{r'}e(\phi(\ui))& \mbox{ if }i_r\in I_0,i_{r+1}\in I_1.\\
\end{array}\right.
$$
Here $r'=r'_\ui$ is as in Section \ref{ch3:subs_comp-pol-reps}.
\end{lem}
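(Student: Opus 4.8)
The statement is purely computational: it asks to compute $\Phi_{\alpha,\bfk}$ on the intertwining operators $\Psi_r e(\ui)$ (resp. $\widetilde\Psi_r e(\ui)$). The plan is to reduce everything to the faithful polynomial representation. By Theorem \ref{ch3:thm_KLR-e-e+1}, the isomorphism $\Phi_{\alpha,\bfk}$ is characterized by the fact that it intertwines the polynomial representations $\bfk^{(I)}_\alpha$ of $R_{\alpha,\bfk}$ and $\bfk^{(\overline I)}_{\overline\alpha,\mathrm{ord}}/J_{\overline\alpha,\mathrm{ord}}$ of $S_{\overline\alpha,\bfk}$, under the identification fixed in Section \ref{ch3:subs_comp-pol-reps}. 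Since inverting $\Sigma$ is faithfully flat and the polynomial representation remains faithful after localization, it suffices to check each claimed identity after applying both sides to an arbitrary element $f e(\phi(\ui))$ of the (localized) polynomial representation.

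First I would recall, as already noted in the paper, that $\Psi_r e(\ui)$ acts on $\bfk^{(I)}_\alpha$ exactly by the operator $s_r e(\ui)$ (permutation of the first $d$ variables together with swapping the $r$ and $r{+}1$ entries of $\ui$), and likewise that the corresponding element $\Psi_{r'}$ or $\Psi_{r'+1}$, etc., on the $\overline\Gamma$ side acts by the appropriate simple transposition on the $\overline d$ variables. So the right-hand side of each formula, read in the polynomial representation, is a product of simple transpositions $s_{r'+\bullet}$ acting on $\bfk^{(\overline I)}_{\overline\alpha,\mathrm{ord}}/J_{\overline\alpha,\mathrm{ord}}$. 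The point is then to check that this product of transpositions equals, under the identification $\bfk^{(I)}_\alpha\simeq\bfk^{(\overline I)}_{\overline\alpha,\mathrm{ord}}/J_{\overline\alpha,\mathrm{ord}}$, the operator $s_r e(\ui)$ — i.e. that the block of variables attached to a vertex in $I_1$ (two variables $x_{r'},x_{r'+1}$, which are identified modulo $J$) is moved correctly, and that the relative order of blocks is reversed exactly as $s_r$ reverses the order of $i_r$ and $i_{r+1}$. For the four cases of the first formula ($i_r,i_{r+1}\in I_0$; $i_r\in I_1,i_{r+1}\in I_0$; $i_r\in I_0,i_{r+1}\in I_1$; $i_r,i_{r+1}\in I_1$) this is precisely the statement that $s_{r'}$, $s_{r'}s_{r'+1}$, $s_{r'+1}s_{r'}$, $s_{r'+1}s_{r'+2}s_{r'}s_{r'+1}$ are reduced words for the block-transposition permutations $\hat s_{p}$ introduced in the proof of Theorem \ref{ch3:thm_KLR-e-e+1}; so the first formula is really just the already-established compatibility, rewritten in terms of the $\Psi$'s rather than the $\tau$'s.

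For the second formula, the case $i_r - 1 = i_{r+1}$, the subtlety is that $\Psi_r = -(x_r-x_{r+1})^{-1}\tau_r$ involves a denominator, so one works with $\widetilde\Psi_r=(x_r-x_{r+1})\Psi_r = -\tau_r e(\ui)$ to stay inside $R_{\alpha,\bfk}$. Here I would use that $\Phi_{\alpha,\bfk}(x_r e(\ui)) = x^*_r e(\phi(\ui)) = x_{r'}e(\phi(\ui))$, together with the fact — to be read off from the definition of $\overline\Gamma$ and the identity \eqref{ch3:eq_P10} — that when $i_r\in I_1$ one has $x^*_r e(\phi(\ui)) = x_{r'}e(\phi(\ui)) = x_{r'+1}e(\phi(\ui))$ in $S_{\overline\alpha,\bfk}$ (cf. Remark \ref{ch3:rem_xa=xa+1}), so that multiplying by $(x_r - x_{r+1})$ on the $\Gamma$ side corresponds to multiplying by $(x_{r'}-x_{(r+1)'})$ on the $\overline\Gamma$ side, and the denominators match up. Combining $\Phi_{\alpha,\bfk}(\tau_r e(\ui))$ (given by Theorem \ref{ch3:thm_KLR-e-e+1}) with this bookkeeping of the factor $(x_r-x_{r+1})$ — distributing it onto the correct one of the two or four factors so as to turn a $\tau^*$ into a $\widetilde\Psi^*$ — yields the three displayed cases. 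The main obstacle I anticipate is purely combinatorial rather than conceptual: correctly tracking, in the $i_r\in I_1$ or $i_{r+1}\in I_1$ cases, which of the new strands $r',r'+1,r'+2$ carries the factor $(x_a - x_b)$ and making sure that the resulting element is genuinely in $S_{\overline\alpha,\bfk}$ (no spurious denominators), using the relation $x_{r'}e(\phi(\ui)) = x_{r'+1}e(\phi(\ui))$ from Remark \ref{ch3:rem_xa=xa+1} to absorb any leftover $(x_{r'}-x_{r'+1})$. Since everything can be verified on the faithful polynomial representation, no genuine relations in $S_{\overline\alpha,\bfk}$ need to be checked by hand beyond those already recorded in Section \ref{ch3:subs_comp-pol-reps}.
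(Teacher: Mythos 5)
Your proposal is correct and follows essentially the same route as the paper: the paper's proof is precisely to observe that both sides of each identity act by the same operator on the faithful polynomial representation of $S_{\overline\alpha,\bfk}[\Sigma^{-1}]$ (which for $\Psi_r$, resp.\ $\widetilde\Psi_r$, is just a product of simple transpositions, resp.\ a transposition times $(x_{r'}-x_{(r+1)'})$), and that faithfulness then forces the right-hand side to coincide with $\Phi_{\alpha,\bfk}(\Psi_re(\ui))$, hence to lie in $S_{\overline\alpha,\bfk}$ with no residual denominator. One small terminological slip: localization at $\Sigma$ is flat but not ``faithfully flat''; what you actually need (and what the paper uses) is that $\Sigma$ acts as a non-zero-divisor on the polynomial representation, so $S_{\overline\alpha,\bfk}\hookrightarrow S_{\overline\alpha,\bfk}[\Sigma^{-1}]$ and the localized polynomial representation remains faithful.
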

\begin{proof}[Proof]
By construction of $\Phi_{\alpha,\bfk}$, the elements $\Phi_{\alpha,\bfk}(\Psi_re(\ui))$ and $\Phi_{\alpha,\bfk}(\widetilde\Psi_re(\ui))$ are the unique elements of $S_{\overline\alpha,\bfk}$ that act on the polynomial representation by the same operator as $\Psi_re(\ui)$ and $\widetilde \Psi_re(\ui)$, respectively. 

The right hand side in the formulas for $\Phi_{\alpha,\bfk}(\Psi_re(\ui))$ (or resp. $\Phi_{\alpha,\bfk}(\widetilde\Psi_re(\ui))$) in the statement is an element $X$ in $S_{\overline\alpha,\bfk}[\Sigma^{-1}]$. To complete the proof we have to show that
\begin{itemize}
\item[(1)] $X$ acts by the same operator as $\Psi_re(\ui)$ or $\widetilde\Psi_re(\ui)$, respectively, on the polynomial representation,
\item[(2)] $X$ is in $S_{\overline\alpha,\bfk}$.
\end{itemize}

Part $(1)$ is obvious. Part $(2)$ follows from part $(1)$ and from the faithfulness of the polynomial representation of $S_{\overline\alpha,\bfk}[\Sigma^{-1}]$ (see Lemma \ref{ch3:lem_pol-rep-of-S-defined+faith}). (In fact, part $(2)$ is not obvious only in the case $i_r=i_{r+1}\in I_1$.)


\end{proof}

\subsection{Special quivers}
\label{ch3:subs_not-e-e+1}
From now on we will be interested only in some special types of quivers.

\medskip
First, consider the quiver $\Gamma=\Gamma_{e}$, where $e$ is an integer $>1$. In particular, from now on we fix $I=\bbZ/e\bbZ$. Fix $k\in [0,e-1]$ and set $I_1=\{k\}$ and $I_0=I\backslash\{k\}$. In this case the quiver $\overline\Gamma$ is isomorphic to $\Gamma_{e+1}$. More precisely,
the decomposition $\overline I=\overline I_0\sqcup\overline I_1\sqcup\overline I_2$ is such that $\overline I_1=\{k\}$ and $\overline I_2=\{k+1\}$. To avoid confusion, for $i\in \overline I$ we will write $\overline\alpha_i$ and $\overline\varepsilon_i$ for $\alpha_i$ and $\varepsilon_i$ respectively.

\smallskip
\begin{rk}
\label{ch3:rk_def-ordered-Ibar}
If $\Gamma$ is as above, a sequence $\ui=(i_1,\cdots,i_d)\in \overline I^d$ is well-ordered if for each index $a$ such that $i_a=k$ we have $a<d$ and $i_{a+1}=k+1$. The sequence $\ui$ is unordered if there is $r\leqslant d$ such that the subsequence $(i_1,\cdots,i_r)$ contains more elements equal to $k+1$ than elements equal to $k$.
\end{rk}

\smallskip
Let $\Upsilon\colon\bbZ\to\bbZ$ be the map given for $a\in\bbZ,b\in[0,e-1]$ by
\begin{equation}
\label{ch3:eq_upsilon}
\Upsilon(ae+b)=
\left\{\begin{array}{ll}
a(e+1)+b &\mbox{ if }b\in[0,k],\\
a(e+1)+b+1 &\mbox{ if }b\in[k+1,e-1].
\end{array}\right.
\end{equation}

\medskip
Now, consider the quiver $\widetilde\Gamma=(\Gamma_\infty)^{\sqcup l}$ (i.e., $\widetilde\Gamma$ is a disjoint union of $l$ copies of $\Gamma_\infty$).
Set $\widetilde\Gamma=(\widetilde I,\widetilde H)$ and write $\widetilde\alpha_{i}$ and $\widetilde\varepsilon_{i}$ and for $\alpha_{i}$ and $\varepsilon_{i}$ respectively for each $i\in \widetilde I$.
We identify an element of $\widetilde I$ with an element $(a,b)\in \bbZ\times [1,l]$ in the obvious way.
Consider the decomposition $\widetilde I=\widetilde I_0\sqcup \widetilde I_1$ such that $(a,b)\in \widetilde I_1$ if and only if $a\equiv k~\mod~e$.
In this case the quiver $\overline{\widetilde\Gamma}$ is isomorphic to $\widetilde\Gamma$. We will often write $\widetilde\Gamma$ instead of $\overline{\widetilde \Gamma}$ (but sometimes, if confusion is possible, we will use the notation $\overline{\widetilde\Gamma}$ to stress that we work with the doubled quiver).  More precisely, in this case we have
$$
\begin{array}{ll}
(a,b)^0=(\Upsilon(a),b),\\
(a,b)^1=(\Upsilon(a),b),\\
(a,b)^2=(\Upsilon(a)+1,b).\\
\end{array}
$$
To distinguish notations, we will always write $\widetilde\phi$ for any of the maps $\widetilde\phi\colon\widetilde I^\infty\to\widetilde I^\infty$, $Q_{\widetilde I}\to Q_{\widetilde I}$, $X_{\widetilde I}\to X_{\widetilde I}$ in Section \ref{ch3:subs_not-quiv-I-Ibar}.

\medskip
From now on we write $\Gamma=\Gamma_e$, $\overline\Gamma=\Gamma_{e+1}$ and $\widetilde\Gamma=(\Gamma_\infty)^{\sqcup l}$. Recall that
$$
I=I_e=\bbZ/e\bbZ,\quad \overline I=I_{e+1}=\bbZ/(e+1)\bbZ, \quad\widetilde I=(I_\infty)^{\sqcup l}=\bbZ\times [1,l].
$$
Consider the quiver homomorphism $\pi_e\colon \widetilde \Gamma\to \Gamma$ such that
$$
\pi_e\colon\widetilde I\to I,~(a,b)\mapsto a~\mod~e.
$$
Then $\pi_{e+1}$ is a quiver homomorphism $\pi_{e+1}\colon\widetilde\Gamma\to\overline\Gamma$.
They yield $\bbZ$-linear maps
$$
\pi_e\colon Q_{\widetilde I}\to Q_{I}, \quad \pi_e\colon X_{\widetilde I}\to X_{I},\quad \pi_{e+1}\colon Q_{\widetilde I}\to Q_{\overline I}, \quad \pi_{e+1}\colon X_{\widetilde I}\to X_{\overline I}.
$$

The following diagrams are commutative for $\alpha\in Q^+_I$ and $\widetilde\alpha\in Q^+_{\widetilde I}$ such that $\pi_e(\widetilde\alpha)=\alpha$,
$$
\begin{CD}
Q_{\widetilde I} @>{\widetilde\phi}>> Q_{\widetilde I}\\
@V{\pi_e}VV                                  @V{\pi_{e+1}}VV\\
Q_I              @>{\phi}>>           Q_{\overline I}\\
\end{CD}
\qquad\qquad
\begin{CD}
X_{\widetilde I} @>{\widetilde\phi}>> X_{\widetilde I}\\
@V{\pi_e}VV                                  @V{\pi_{e+1}}VV\\
X_I              @>{\phi}>>           X_{\overline I}\\
\end{CD}
\qquad\qquad
\begin{CD}
\widetilde I^{\widetilde\alpha} @>{\widetilde\phi}>> \widetilde I^{\widetilde\phi(\widetilde\alpha)}\\
@V{\pi_e}VV                                  @V{\pi_{e+1}}VV\\
I^{\alpha}         @>{\phi}>>           \overline I^{\phi(\alpha)}\\
\end{CD}
$$

The quiver $\widetilde \Gamma$ is infinite. We will sometimes use its truncated version. Fix a positive integer $N$. Denote by $\widetilde\Gamma^{\leqslant N}$ the full subquiver (i.e., a quiver with a smaller set of vertices and the same arrows between these vertices) of $\widetilde\Gamma$ that contains only vertices $(a,b)$ such that $|a|\leqslant eN$. Let $\overline{\widetilde\Gamma}^{\leqslant N}$ be the doubled quiver associated with $\widetilde\Gamma^{\leqslant N}$. We can see the quiver $\overline{\widetilde\Gamma}^{\leqslant N}$ as a full subquiver of $\overline{\widetilde\Gamma}$ that contains only vertices $(a,b)$ such that we have
$$
\left\{
\begin{array}{lll}
-(e+1)N \leqslant a\leqslant (e+1)N& \mbox{ if } k\ne 0,\\
-(e+1)N \leqslant a\leqslant (e+1)N+1& \mbox{ else}.
\end{array}
\right.
$$
(Attention, it is not true that the isomorphism of quivers $\widetilde\Gamma\simeq \overline{\widetilde\Gamma}$ takes $\widetilde\Gamma^{\leqslant N}$ to $\overline{\widetilde\Gamma}^{\leqslant N}$.)

\subsection{Hecke algebras}
\label{ch3:subs_Hecke}
Let $R$ be a commutative ring with $1$. Fix an element $q\in R$.

\smallskip
\begin{df}
The \emph{affine Hecke algebra} $H_{R,d}(q)$ is the $R$-algebra generated by $T_1,\cdots,T_{d-1}$ and the invertible elements $X_1,\cdots,X_d$ modulo the following defining relations
$$
\begin{array}{lllll}
X_rX_s=X_sX_r, &T_rX_r=X_rT_r ~\mbox{ if }|r-s|>1,\\
T_{r}T_{s}=T_{s}T_{r} \mbox{ if }|r-s|>1,& T_{r}T_{r+1}T_{r}=T_{r+1}T_{r}T_{r+1},\\
 T_rX_{r+1}=X_rT_r+(q-1)X_{r+1}, &T_rX_{r}=X_{r+1}T_r-(q-1)X_{r+1},\\
(T_r-q)(T_r+1)=0.\\
\end{array}
$$
\end{df}

Assume that $R=\bfk$ is a field and $q\ne 0, 1$.
The algebra $H_{d,\bfk}(q)$ has a faithful representation (see \cite[Prop.~3.11]{MS}) in the vector space $\bfk[X^{\pm 1}_1,\cdots,X^{\pm 1}_d]$ such that $X^{\pm 1}_r$ acts by multiplication by $X^{\pm 1}_r$ and $T_r$ by
$$
T_r(P)=qs_r(P)+(q-1)X_{r+1}(X_{r}-X_{r+1})^{-1}(s_r(P)-P).
$$

The following operator acts on $\bfk[X_1^{\pm 1},\cdots,X_d^{\pm 1}]$ as the reflection $s_r$
$$
\Psi_r=\frac{X_r-X_{r+1}}{qX_r-X_{r+1}}(T_r-q)+1=(T_r+1)\frac{X_r-X_{r+1}}{X_r-qX_{r+1}}-1.
$$
For a future use, consider the element $\widetilde\Psi_r\in H_{d,\bfk}$ given by
$$
\widetilde\Psi_r=(qX_r-X_{r+1})\Psi_r=(X_{r}-X_{r+1})T_r+(q-1)X_{r+1}.
$$


\subsection{The isomorphism between Hecke and KLR algebras}
\label{ch3:subs_isom-KLR-Hecke-gen}

First, we define some localized versions of Hecke algebras and KLR algebras.
Let $\scrF$ be a finite subset of $\bfk^\times$. 
We view $\scrF$ as the vertex set of a quiver with an arrow $i\to j$ if and only if $j=qi$.
Consider the algebra
$$
A_1=\bigoplus_{\ui\in \scrF^d}\bfk[X_1^{\pm 1},\cdots,X_d^{\pm 1}][(X_r-X_t)^{-1},(qX_r-X_t)^{-1};~r\ne t]e(\ui),
$$
where $e(\ui)$ are orthogonal idempotents and $X_r$ commutes with $e(\ui)$.
Let $H_{d,\bfk}^{\rm loc}(q)$ be the $A_1$-module given by the extension of scalars from the $\bfk[X_1^{\pm 1},\cdots,X_d^{\pm 1}]$-module $H_{d,\bfk}(q)$. It has a $\bfk$-algebra structure such that
$$
T_re(\ui)-e(s_r(\ui))T_r=(1-q)X_{r+1}(X_r-X_{r+1})^{-1}(e(\ui)-e(s_r(\ui)))
$$
and
$$
Z^{-1}T_r=T_rZ^{-1},\quad \mbox{ where } Z=\prod_{r<t}(X_r-X_t)^2\prod_{r\ne t}(qX_r-X_t)^2.
$$

In this section the KLR algebras are always defined with respect to the quiver $\scrF$. We consider the algebra
$$
A_2=\bigoplus_{\ui\in \scrF^d}\bfk[x_1,\cdots,x_d][S_\ui^{-1}]e(\ui),
$$
where
$$
S_\ui=\{(x_r+1),(i_r(x_r+1)-i_t(x_t+1)),(qi_r(x_r+1)-i_t(x_t+1);~r\ne t)\}.
$$
Consider the following central element in $R_{d,\bfk}$
$$
z=\prod_{r}(x_r+1)\prod_{i,j\in\scrF,r\ne t}(i(x_r+1)-j(x_t+1)).
$$ 
The $A_2$-module $R_{d,\bfk}^{\rm loc}=A_2\otimes_{\bfk^{(\scrF)}_d}R_{d,\bfk}$ has a $\bfk$-algebra structure because it is a subalgebra in $R_{d,\bfk}[z^{-1}]$, where $\bfk_d^{(\scrF)}$ is as in (\ref{ch3:eq_k^I}).

\begin{rk}
We assumed above that the set $\calF$ is finite. This assumption is important because it implies that $A_1$ contains $\bfk[X_1^{\pm 1},\cdots,X_d^{\pm 1}]$ and $A_2$ contains $\bfk[x_1,\cdots,x_d]$. However, it is possible to define the algebras above ($A_1$, $A_2$, $H_{d,\bfk}^{\rm loc}(q)$ and $R_{d,\bfk}^{\rm loc}$) for arbitrary $\calF\subset \bfk^\times$. Indeed, if $\calF_1\subset \calF_2$ are finite, then the algebra defined with respect to $\calF_1$ is obviously a non-unitary subalgebra of the algebra defined with respect to $\calF_2$. Then we can define the algebras $A_1$, $A_2$, $H_{d,\bfk}^{\rm loc}(q)$ and $R_{d,\bfk}^{\rm loc}$ with respect to any arbitrary $\calF$. For example, we define the algebra $R_{d,\bfk}^{\rm loc}$ associated with $\calF$ as
$$
R_{d,\bfk}^{\rm loc}(\calF)=\lim_{\stackrel{\longrightarrow}{\calF_0\subset \calF}}R_{d,\bfk}^{\rm loc}(\calF_0),
$$   
where the direct limit is taken over all finite subsets $\calF_0$ of $\calF$.
Note that if the set $\calF$ is infinite, then the algebras $A_1$, $A_2$, $H_{d,\bfk}^{\rm loc}(q)$ and $R_{d,\bfk}^{\rm loc}$ are not unitary.
\end{rk}

From now on we assume that $\calF$ is an arbitrary subset of $\bfk^\times$.

\begin{prop}
\label{ch3:prop-isom_Hekce-KLR-loc}
There is an isomorphism of $\bfk$-algebras $R_{d,\bfk}^{\rm loc}\simeq H_{d,\bfk}^{\rm loc}(q)$ such that
$$
e(\ui)\mapsto e(\ui),
$$
$$
x_re(\ui)\mapsto (i_r^{-1}X_r-1)e(\ui),\\
$$
$$
\Psi_re(\ui)\mapsto \Psi_re(\ui).
$$
\end{prop}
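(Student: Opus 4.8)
The plan is to realise both $R_{d,\bfk}^{\rm loc}$ and $H_{d,\bfk}^{\rm loc}(q)$ as subalgebras of the endomorphism ring of a single localised polynomial algebra, identified on the two sides by the substitution $X_re(\ui)=i_r(x_r+1)e(\ui)$, and then to check that the three families of elements in the statement act by the same operators on the two sides. First I would dispose of the commutative part. On the $\ui$-component the assignment $x_re(\ui)\mapsto(i_r^{-1}X_r-1)e(\ui)$ is the affine change of variables $\bfk[x_1,\dots,x_d]\to\bfk[X_1,\dots,X_d]$, $x_r\mapsto i_r^{-1}X_r-1$, whose inverse is $X_r\mapsto i_r(x_r+1)$; it carries the elements $x_r+1$, $i_r(x_r+1)-i_t(x_t+1)$ and $qi_r(x_r+1)-i_t(x_t+1)$ generating $S_\ui$ to $i_r^{-1}X_r$, $X_r-X_t$ and $qX_r-X_t$. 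Hence it extends to an isomorphism of $\bfk$-algebras $A_2\simeq A_1$ fixing each $e(\ui)$, which I use to identify $A_2$ with $A_1$; in particular $\End_\bfk(A_2)=\End_\bfk(A_1)$.

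Next I would compare the actions. By Proposition~\ref{ch3:prop_faith-rep-KLR} and the construction of $R_{d,\bfk}^{\rm loc}$, this algebra acts faithfully on $A_2$, with $e(\ui)$ the projection onto the $\ui$-component, $x_re(\ui)$ multiplication by $x_r$, and, as observed when the intertwining operators were introduced, $\Psi_re(\ui)$ the reflection operator $s_re(\ui)$ which swaps the variables $x_r,x_{r+1}$ and the labels $i_r,i_{r+1}$. In the same way $H_{d,\bfk}^{\rm loc}(q)$ acts faithfully on $A_1$, with $e(\ui)$ the projection, $(i_r^{-1}X_r-1)e(\ui)$ multiplication by $i_r^{-1}X_r-1$, and $\Psi_re(\ui)$ again the reflection $s_re(\ui)$, now swapping $X_r,X_{r+1}$ and the labels. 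Under the identification $A_2=A_1$ of the previous step, multiplication by $x_r$ becomes multiplication by $i_r^{-1}X_r-1$, and — this is the only genuine computation — the two reflection operators coincide, because the substitution $x_ae(\uj)\mapsto((j_a)^{-1}X_a-1)e(\uj)$ is compatible with the simultaneous interchange of $(x_r,x_{r+1})$ with $(X_r,X_{r+1})$ and of the $r$-th and $(r{+}1)$-st entries of $\uj$. Thus $e(\ui)$, $x_re(\ui)$, $\Psi_re(\ui)\in R_{d,\bfk}^{\rm loc}$ act by exactly the operators that $e(\ui)$, $(i_r^{-1}X_r-1)e(\ui)$, $\Psi_re(\ui)\in H_{d,\bfk}^{\rm loc}(q)$ act by.

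It then remains to see that these families generate, so that equality of images in $\End_\bfk(A_2)=\End_\bfk(A_1)$ together with faithfulness yields the isomorphism. The subalgebra $A_2\subset R_{d,\bfk}^{\rm loc}$ is generated over $\bfk$ by the $e(\ui)$, the $x_re(\ui)$ and the inverses of the elements of the sets $S_\ui$; a ring homomorphism defined on the $x_re(\ui)$ and sending them to invertible elements is forced on those inverses, so it suffices that $R_{d,\bfk}^{\rm loc}$ be generated by $A_2$ together with the $\Psi_re(\ui)$. This holds because $\tau_re(\ui)$ is recovered from $\Psi_re(\ui)$ and $A_2$ by inverting the defining formula for $\Psi_re(\ui)$: if $i_r\ne i_{r+1}$ and $i_{r+1}\ne i_r-1$ then $\Psi_re(\ui)=\tau_re(\ui)$; if $i_r=i_{r+1}$ then $x_r-x_{r+1}=i_r^{-1}\bigl(i_r(x_r+1)-i_{r+1}(x_{r+1}+1)\bigr)$ and if $i_{r+1}=i_r-1$ then $x_r-x_{r+1}=-i_r^{-1}\bigl(qi_{r+1}(x_{r+1}+1)-i_r(x_r+1)\bigr)$, a unit multiple of an element of $S_\ui$ in each case, hence $x_r-x_{r+1}$ is invertible in the $\ui$-component of $A_2$ and the formula can be solved for $\tau_re(\ui)$. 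Reading the analogous formula on the Hecke side (where $qX_r-X_{r+1}$ is invertible in $A_1$) shows that $H_{d,\bfk}^{\rm loc}(q)$ is generated by $A_1$ and the $\Psi_re(\ui)$. Therefore the images of the two algebras in $\End_\bfk(A_2)$ agree, and faithfulness gives the algebra isomorphism $R_{d,\bfk}^{\rm loc}\simeq H_{d,\bfk}^{\rm loc}(q)$ with the stated values on $e(\ui)$, $x_re(\ui)$ and $\Psi_re(\ui)$.

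The main obstacle is bookkeeping rather than anything conceptual. One must make sure that the isomorphism $A_2\simeq A_1$ really intertwines the two reflection actions, relabellings of the sequences $\ui$ included, and that every denominator that surfaces — notably $x_r-x_{r+1}$ on the KLR side and $qX_r-X_{r+1}$ on the Hecke side — is genuinely a unit in the relevant localisation, so that the intertwiners $\Psi_r$ and the generators $\tau_r$ (resp. $T_r$) determine one another; once these points are in place, faithfulness of the two polynomial representations does the rest.
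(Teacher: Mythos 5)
Your argument is correct and follows essentially the same route as the paper: realize $A_2\simeq A_1$ via the change of variables $X_r=i_r(x_r+1)$, observe that both $R_{d,\bfk}^{\rm loc}$ and $H_{d,\bfk}^{\rm loc}(q)$ act faithfully on this common polynomial module, match the generators $e(\ui)$, $x_re(\ui)$, $\Psi_re(\ui)$ operator by operator, and conclude. The paper's proof is more terse — in particular it asserts without elaboration that those elements generate each localized algebra — so your explicit recovery of $\tau_re(\ui)$ (resp.\ $T_re(\ui)$) from $\Psi_re(\ui)$ by inverting the defining formula, using that $x_r-x_{r+1}$ (resp.\ $qX_r-X_{r+1}$) is a unit in the relevant component of $A_2$ (resp.\ $A_1$), is welcome detail rather than a departure.
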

\begin{proof}
The polynomial representations of $H_{d,\bfk}(q)$ and $R_{d,\bfk}$ yield faithful representations of $H_{d,\bfk}^{\rm loc}(q)$ and $R_{d,\bfk}^{\rm loc}$ on $A_1$ and $A_2$ respectively. Moreover, there is an isomorphism of $\bfk$-algebras $A_2\simeq A_1$ given by $x_re(\ui)\mapsto (i_r^{-1}X_r-1)e(\ui)$. 

This implies the statement. Indeed, the elements $e(\ui)\in R_{d,\bfk}^{\rm loc}$ and $e(\ui)\in H_{d,\bfk}^{\rm loc}(q)$ act on $A_2\simeq A_1$ by the same operators. The elements $x_re(\ui)\in R_{d,\bfk}^{\rm loc}$ and $(i_r^{-1}X_r-1)e(\ui)\in H_{d,\bfk}^{\rm loc}(q)$ act on $A_2\simeq A_1$ by the same operators. Finally, the elements $\Psi_re(\ui)\in R_{d,\bfk}^{\rm loc}$ and $\Psi_re(\ui)\in H_{d,\bfk}^{\rm loc}(q)$ also act on $A_2\simeq A_1$ by the same operators. The elements above generate the algebras $R_{d,\bfk}^{\rm loc}$ and $H_{d,\bfk}^{\rm loc}(q)$.
\end{proof}

Now, we consider the subalgebra $\widehat R_{d,\bfk}$ of $R_{d,\bfk}^{\rm loc}$ generated by
\begin{itemize}
\item[\textbullet] the elements of $R_{d,\bfk}$,
\item[\textbullet] the elements $(x_r+1)^{-1}$,
\item[\textbullet] the elements of the form $(i_r(x_r+1)-i_t(x_t+1))^{-1}e(\ui)$ such that $r\ne t$ and $i_r\ne i_t$,
\item[\textbullet]  the elements of the form $(q i_r(x_r+1)-i_t(x_t+1))^{-1}e(\ui)$ such that $r\ne t$ and $qi_r\ne i_t$.
\end{itemize}
Similarly, consider the subalgebra $\widehat H_{d,\bfk}(q)$ of $H_{d,\bfk}^{\rm loc}(q)$ generated by
\begin{itemize}
\item[\textbullet] the elements of $H_{d,\bfk}(q)$,
\item[\textbullet] the elements of the form $(X_r-X_t)^{-1}e(\ui)$ such that $r\ne t$ and $i_r\ne i_t$,
\item[\textbullet] the elements of the form $(qX_r-X_t)^{-1}e(\ui)$ such that $r\ne t$ and $qi_r\ne i_t$.
\end{itemize}

\smallskip
Note that the element $\Psi_re(\ui)\in H^{\rm loc}_{d,\bfk}(q)$ belongs to $\widehat H_{d,\bfk}(q)$ if $i_r\ne qi_{r+1}$.
We have the following proposition, see also \cite[Sec.~3.2]{Rouq-2KM}.

\begin{prop}
\label{ch3:prop-isom_Hekce-KLR-widehat}
The isomorphism $R_{d,\bfk}^{\rm loc}\simeq H_{d,\bfk}^{\rm loc}(q)$ from Proposition \ref{ch3:prop-isom_Hekce-KLR-loc} restricts to an isomorphism $\widehat R_{d,\bfk}\simeq \widehat H_{d,\bfk}(q)$.
\qed
\end{prop}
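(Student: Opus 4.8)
The plan is to prove that the isomorphism $\theta\colon R_{d,\bfk}^{\mathrm{loc}}\to H_{d,\bfk}^{\mathrm{loc}}(q)$ of Proposition \ref{ch3:prop-isom_Hekce-KLR-loc} restricts to an isomorphism $\widehat R_{d,\bfk}\to\widehat H_{d,\bfk}(q)$. Since $\theta$ is already an isomorphism of the ambient localized algebras, it suffices to check that $\theta$ sends each of the listed generators of $\widehat R_{d,\bfk}$ into $\widehat H_{d,\bfk}(q)$ and that $\theta^{-1}$ sends each of the listed generators of $\widehat H_{d,\bfk}(q)$ into $\widehat R_{d,\bfk}$; these two inclusions together force $\theta(\widehat R_{d,\bfk})=\widehat H_{d,\bfk}(q)$, and the restriction of $\theta$ is then the desired isomorphism.

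Most generators are dispatched directly from the explicit formula for $\theta$. First, $\theta$ fixes the idempotents $e(\ui)$, and $\theta((x_r+1)e(\ui))=i_r^{-1}X_re(\ui)$, so $(x_r+1)^{-1}e(\ui)\mapsto i_rX_r^{-1}e(\ui)\in H_{d,\bfk}(q)$. Next, $\theta\big((i_r(x_r+1)-i_t(x_t+1))e(\ui)\big)=(X_r-X_t)e(\ui)$ and $\theta\big((qi_r(x_r+1)-i_t(x_t+1))e(\ui)\big)=(qX_r-X_t)e(\ui)$, so the inverse generators of $\widehat R_{d,\bfk}$ of these two types (present exactly when $i_r\ne i_t$, resp.\ $qi_r\ne i_t$) are sent to the generators $(X_r-X_t)^{-1}e(\ui)$, resp.\ $(qX_r-X_t)^{-1}e(\ui)$ of $\widehat H_{d,\bfk}(q)$. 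Reading the same identities backwards handles $\theta^{-1}$ on $X_r^{\pm1}e(\ui)$, $(X_r-X_t)^{-1}e(\ui)$ and $(qX_r-X_t)^{-1}e(\ui)$. What remains is the image under $\theta$ of the generators $\tau_re(\ui)$ of $R_{d,\bfk}$ and the image under $\theta^{-1}$ of the generators $T_re(\ui)$ of $H_{d,\bfk}(q)$.

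For these I would use the intertwining elements as a bridge, exploiting that $\theta$ fixes them: $\theta(\Psi_re(\ui))=\Psi_re(\ui)$. On the KLR side one rewrites $\tau_re(\ui)$ in terms of $\Psi_re(\ui)$, the $x_s$'s, and an inverse of a factor of the form $i_s(x_s+1)-i_t(x_t+1)$ or $qi_s(x_s+1)-i_t(x_t+1)$ that is legitimate for the given sequence $\ui$; on the Hecke side one uses $\widetilde\Psi_r=(X_r-X_{r+1})T_r+(q-1)X_{r+1}\in H_{d,\bfk}(q)$ together with $\Psi_r=\frac{X_r-X_{r+1}}{qX_r-X_{r+1}}(T_r-q)+1$ to pass between $T_r$ and $\Psi_r$. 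Applying $\theta$ and simplifying then exhibits the image explicitly: when $i_r=i_{r+1}$ one gets $\theta(\tau_re(\ui))=i_r(qX_r-X_{r+1})^{-1}(T_r-q)e(\ui)\in\widehat H_{d,\bfk}(q)$, where $q\ne1$ is what makes $(qX_r-X_{r+1})^{-1}e(\ui)$ legitimate; when $i_r$ and $i_{r+1}$ are not joined by an arrow of $\scrF$ and $i_r\ne i_{r+1}$, one gets $\theta(\tau_re(\ui))=\Psi_re(\ui)\in\widehat H_{d,\bfk}(q)$; and when they are joined by an arrow, $\theta(\tau_re(\ui))$ comes out as a $\bfk$-multiple of $\widetilde\Psi_re(\ui)$ modulo an element of $\bfk[X_r^{\pm1},X_{r+1}^{\pm1}]$, again in $\widehat H_{d,\bfk}(q)$. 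Solving the same relations for $T_re(\ui)$ gives symmetrically that $\theta^{-1}(T_re(\ui))\in\widehat R_{d,\bfk}$.

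The step I expect to be the main obstacle is precisely this last case analysis: one must ensure that whichever denominator is introduced when trading $\tau_r$ (resp.\ $T_r$) for $\Psi_r$ is of an allowed shape for the given $\ui$ — namely $i_s(x_s+1)-i_t(x_t+1)$ with $i_s\ne i_t$ or $qi_s(x_s+1)-i_t(x_t+1)$ with $qi_s\ne i_t$ on the KLR side, and correspondingly $X_s-X_t$ or $qX_s-X_t$ on the Hecke side — so that the resulting element genuinely lies in the hatted algebra. The subcase $i_r=i_{r+1}$ is where the hypothesis $q\ne1$ is used, and the subcases where $i_r$ and $i_{r+1}$ are joined by an arrow are where one must replace $\Psi_r$ by its integral variant $\widetilde\Psi_r$.
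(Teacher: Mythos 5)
The paper states this proposition without a written proof, only with the earlier pointer to \cite[Sec.~3.2]{Rouq-2KM}, so I am assessing your argument on its own terms rather than against a proof text. Your overall strategy is the right one: since $\theta$ is already an isomorphism of the ambient localized algebras, one reduces to checking that $\theta$ sends each listed generator of $\widehat R_{d,\bfk}$ into $\widehat H_{d,\bfk}(q)$ and that $\theta^{-1}$ does the reverse, disposing of the polynomial and inverted-denominator generators by direct computation and of $\tau_r$, $T_r$ via the intertwiners $\Psi_re(\ui)$, which $\theta$ fixes. Your formula $\theta(\tau_re(\ui))=i_r(qX_r-X_{r+1})^{-1}(T_r-q)e(\ui)$ in the case $i_r=i_{r+1}$ is correct, and this is indeed where $q\ne1$ enters to make $(qX_r-X_{r+1})^{-1}e(\ui)$ an allowed denominator.

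Two of your cases are stated too loosely to be quite right as written. First, ``joined by an arrow'' covers two asymmetric subcases that the KLR formula for $\Psi_re(\ui)$ treats differently. If $i_r=qi_{r+1}$ (arrow $i_{r+1}\to i_r$), then $\Psi_re(\ui)=-(x_r-x_{r+1})^{-1}\tau_re(\ui)$ and one gets the clean integral identity $\theta(\tau_re(\ui))=-i_r^{-1}\widetilde\Psi_re(\ui)\in H_{d,\bfk}(q)$, which is the ``$\bfk$-multiple of $\widetilde\Psi_r$'' you describe. If instead $i_{r+1}=qi_r$ (arrow $i_r\to i_{r+1}$), this falls in the ``else'' branch, so $\theta(\tau_re(\ui))=\Psi_re(\ui)$; here $(qX_r-X_{r+1})^{-1}e(\ui)$ is \emph{not} allowed, and one must instead write $\Psi_re(\ui)=(qX_r-X_{r+1})^{-1}e(s_r(\ui))\,\widetilde\Psi_re(\ui)$ and use that $(qX_r-X_{r+1})^{-1}e(s_r(\ui))$ is allowed because $qi_{r+1}=q^2i_r\ne i_r$; so this is an $\widehat H_{d,\bfk}(q)$-multiple of $\widetilde\Psi_r$, not a $\bfk$-multiple modulo a Laurent polynomial. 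Second, and more seriously, the reverse direction is \emph{not} a literal mirror image: when $i_r=qi_{r+1}$, the element $\Psi_re(\ui)$ itself does not lie in $\widehat R_{d,\bfk}$, since its KLR form carries the denominator $(x_r-x_{r+1})^{-1}e(s_r(\ui))$, which is $\theta^{-1}$ of $i_r^{-1}(qX_r-X_{r+1})^{-1}e(s_r(\ui))$ with $qi_{r+1}=i_r$ and hence of forbidden shape. So ``solving the same relations symmetrically'' does not, as stated, place $\theta^{-1}(T_re(\ui))$ in $\widehat R_{d,\bfk}$. What saves the argument is a cancellation that your sketch does not make visible: writing $T_re(\ui)=qe(\ui)+\frac{qX_r-X_{r+1}}{X_r-X_{r+1}}(\Psi_r-1)e(\ui)$, the prefactor $\theta^{-1}\bigl((qX_r-X_{r+1})e(s_r(\ui))\bigr)=i_r(x_r-x_{r+1})e(s_r(\ui))$ exactly cancels the forbidden denominator of $\Psi_re(\ui)$, leaving $-i_r\bigl(i_{r+1}(x_r+1)-i_r(x_{r+1}+1)\bigr)^{-1}\tau_re(\ui)$, which \emph{is} in $\widehat R_{d,\bfk}$. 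You should make this cancellation explicit rather than appealing to symmetry; with that done, and with the two arrow subcases separated as above, the argument goes through.
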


\subsection{Deformation rings}
\label{ch3:subs_def-ring}

In this section we introduce some general definitions from \cite{RSVV} for a later use.

We call the \emph{deformation ring} $(R,\kappa,\kappa_1,\cdots,\kappa_l)$ a regular commutative noetherian $\bbC$-algebra $R$ with $1$ equipped with a homomorphism $\bbC[\kappa^{\pm 1},\kappa_1,\cdots,\kappa_l]\to R$. Let $\kappa,\kappa_1,\cdots,\kappa_l$ also denote the images of $\kappa,\kappa_1,\cdots,\kappa_l$ in $R$.
A deformation ring is \emph{in general position} if any two elements of the set
$$
\{\kappa_u-\kappa_v+a\kappa+b,\kappa-c;~a,b\in\bbZ,c\in \bbQ,u\ne v\}
$$
have no common non-trivial divisors. 
A \emph{local deformation ring} is a deformation ring which is a local ring such that $\kappa_1,\cdots,\kappa_l, \kappa-e$ belong to the maximal ideal of $R$. 
Note that each $\bbC$-algebra that is a field has a \emph{trivial} local deformation ring structure, i.e., such that $\kappa_1=\cdots=\kappa_l=0$ and $\kappa=e$. We always consider $\bbC$ as a local deformation ring with a trivial deformation ring structure.

We will write $\overline\kappa=\kappa(e+1)/e$ and $\overline\kappa_r=\kappa_r(e+1)/e$.
We will abbreviate $R$ for $(R,\kappa,\kappa_1,\cdots,\kappa_l)$ and $\overline R$ for $(R,\overline\kappa,\overline\kappa_1,\cdots,\overline\kappa_l)$.

Let $R$ be a complete local deformation ring with residue field $\bfk$. Consider the elements $q_{e}=\exp(2\pi \sqrt{-1}/\kappa)$ and $q_{e+1}=\exp(2\pi \sqrt{-1}/\overline\kappa)$ in $R$. These elements specialize to $\zeta_{e}=\exp(2\pi \sqrt{-1}/e)$ and $\zeta_{e+1}=\exp(2\pi \sqrt{-1}/(e+1))$ in $\bfk$.

\subsection{The choice of $\calF$}
\label{ch3:subs_param-Hecke}
From now on we assume that $R$ is a complete local deformation ring in general position with residue field $\bfk$ and field of fractions $K$. In this section we define some special choice of the set $\calF$. This choice of parameters is particularly interesting because it is related with the categorical action on the category $\calO$ for $\widehat{\mathfrak{gl}}_N$, see \cite{RSVV}.

Fix a tuple $\nu=(\nu_1,\cdots,\nu_l)\in\bbZ^l$. Put $Q_r=\exp(2\pi \sqrt{-1}(\nu_r+\kappa_r)/\kappa)$ for $r\in[1,l]$.
The canonical homomorphism $R\to \bfk$ maps $q_e$ to $\zeta_e$ and $Q_r$ to
$\zeta_e^{\nu_r}$.

Now, consider the subset $\scrF$ of $R$ given by
$$
\scrF=\bigcup_{r\in \bbZ,t\in[1,l]}\{q_e^rQ_t\}.
$$
Denote by $\calF_\bfk$ the image of $\calF$ in $\bfk$ with respect to the surjection $R\to \bfk$. Recall from Section \ref{ch3:subs_isom-KLR-Hecke-gen} that we consider $\calF$ (and $\calF_\bfk)$ as a vertex set of a quiver. The set $\calF$ is a vertex set of a quiver that is a disjoint union if $l$ infinite linear quivers. The set $\calF_\bfk$ is a vertex set of a cyclic quiver of length $e$.

Fix $k\in[0,e-1]$. To this $k$ we associate a map $\Upsilon\colon \bbZ\to\bbZ$ as in (\ref{ch3:eq_upsilon}).
Now, consider the tuple
$$
\overline\nu=(\overline\nu_1,\cdots,\overline\nu_l)\in\bbZ^l,\qquad \overline\nu_r=\Upsilon(\nu_r)~\forall r\in[1,l].
$$
Let $\overline R$ be as in the previous section. Let $\overline\bfk$ and $\overline K$ be the residue field and the field of fractions of $\overline R$ respectively.
Now, consider $\overline Q=(\overline Q_1,\cdots,\overline Q_l)$, where $\overline Q_r=\exp(2\pi \sqrt{-1}(\overline\nu_r+\overline\kappa_r)/\overline\kappa)$ and $\overline\kappa$ and $\overline\kappa_r$ are defined in Section \ref{ch3:subs_def-ring}. 
Consider the subset $\overline\scrF$ of $\overline R$ given by
$$
\overline\scrF=\bigcup_{r\in \bbZ,t\in[1,l]}\{q_{e+1}^r\overline Q_t\}.
$$
Denote by $\overline{\calF}_{\overline\bfk}$ the image of $\overline{\calF}$ in $\overline\bfk$ with respect to the surjection $\overline R\to \overline\bfk$. The set $\overline\calF$ is a vertex set of a quiver that is a disjoint union of $l$ infinite linear quivers. The set $\overline\calF_{\overline\bfk}$ is a vertex set of a cyclic quiver of length $e+1$.

\subsection{Algebras $\widehat H$, $\widehat{SH}$, $\widehat R$ and $\widehat{S}$}

Let $\Gamma=(I,H)$, $\overline\Gamma=(\overline I,\overline H)$ and  $\widetilde\Gamma=(\widetilde I,\widetilde H)$ be as in Section \ref{ch3:subs_not-e-e+1}.

We will use the notation $\calF$, $\calF_\bfk$, $\overline\calF$ and $\overline\calF_{\overline\bfk}$ as in previous section. (In particular, we fix some $\nu=(\nu_1,\cdots,\nu_l)$.)

We have the following isomorphisms of quivers
$$
\widetilde I\simeq \scrF,\quad i=(a,b)\mapsto p_{i}:=\exp(2\pi \sqrt{-1} (a+\kappa_b)/\kappa),
$$
$$
\widetilde I\simeq \overline\scrF,\quad i=(a,b)\mapsto \overline p_i:=\exp(2\pi \sqrt{-1} (a+\overline\kappa_b)/\overline\kappa),
$$
$$
I\simeq \scrF_\bfk,\quad i\mapsto p_{i}:=\zeta_e^i,
$$
$$
\overline I\simeq \overline\scrF_{\overline\bfk},\quad i\mapsto \overline p_i:=\zeta_{e+1}^i.
$$

These isomorphisms yield the following commutative diagrams
$$
\begin{CD}
\widetilde I @>{\sim}>> \scrF\\
@V{\pi_e}VV              @VVV\\
I            @>{\sim}>> \scrF_\bfk,
\end{CD}
\qquad\qquad
\begin{CD}
\widetilde I @>{\sim}>> \overline\scrF\\
@V{\pi_{e+1}}VV              @VVV\\
\overline I            @>{\sim}>> \overline\scrF_{\overline\bfk}.
\end{CD}
$$

We will identify
$$
I\simeq \scrF_\bfk,\quad \overline I\simeq \overline\scrF_{\overline\bfk}, \quad \widetilde I\simeq \scrF, \quad \widetilde I\simeq \overline\scrF
$$
as above.




Our goal is to obtain an analogue of Theorem \ref{ch3:thm_KLR-e-e+1} over the ring $R$.
First, consider the algebras $\widehat H_{d,\bfk}(\zeta_e)$ and $\widehat H_{d,K}(q_e)$ defined in the same way as in Section \ref{ch3:subs_isom-KLR-Hecke-gen} with respect to the sets $\scrF_\bfk\subset \bfk$ and $\scrF\subset K$. We can consider the $R$-algebra $\widehat H_{d,R}(q_e)$ defined in a similar way with respect to the same set of idempotents as $\widehat H_{d,\bfk}(\zeta_e)$ (i.e., with respect to the set $\calF_\bfk$, not $\calF$). 

The algebra $\widehat H_{d,K}(q_e)$ is not unitary because the quiver $\widetilde \Gamma$ is infinite. To avoid this problem we consider the truncated version of this algebra. Let $\widehat H^{\leqslant N}_{d, K}(q_{e})$ be the quotient of $\widehat H_{d, K}(q_{e})$ by the two-sided ideal generated by the idempotents $e(\uj)\in \widetilde I^d$ such that $\uj$ contains a component that is not a vertex of the truncated quiver ${\widetilde \Gamma}^{\leqslant N}$ (see Section \ref{ch3:subs_not-e-e+1}).  (In fact, the algebra $\widehat H^{\leqslant N}_{d, K}(q_{e})$ is isomorphic to a direct summand of $\widehat H_{d, K}(q_{e})$).

Similarly, we define the algebras $\widehat H_{d,\overline \bfk}(\zeta_{e+1})$, $\widehat H_{d,\overline K}(q_{e+1})$ and $\widehat H_{d,\overline R}(q_{e+1})$ using the sets $\overline\calF$ and $\overline\calF_{\overline\bfk}$ instead of $\calF$ and $\calF_\bfk$. We  define a truncation $\widehat H^{\leqslant N}_{d,\overline K}(q_{e+1})$ of $\widehat H_{d,\overline K}(q_{e+1})$ using the quiver $\overline{\widetilde{\Gamma}}^{\leqslant N}$.

For each $\ui\in I^{d}$ we consider the following idempotent in $\widehat H^{\leqslant N}_{d, K}(q_{e})$:
$$
e(\ui)=\sum_{\uj\in \widetilde I^d,\pi_e(\uj)=\ui}e(\uj).
$$  
Here we mean that $e(\uj)$ is zero if $\uj$ contains a vertex that is not in the truncated quiver ${\widetilde \Gamma}^{\leqslant N}$. The idempotent $e(\ui)$ is well-defined because only a finite number of terms in the sum are nonzero. For each $\ui\in \overline I^d$ we can define an idempotent $e(\ui)\in \widehat H^{\leqslant N}_{d,\overline K}(q_{e+1})$ in a similar way.

\begin{lem}
\label{ch3:lem-inj_R_K_H}
There is an injective algebra homomorphism $\widehat H_{d, R}(q_{e})\to \widehat H^{\leqslant N}_{d, K}(q_{e})$ such that $e(\ui)\mapsto e(\ui)$, $X_re(\ui)\mapsto X_re(\ui)$ and $T_re(\ui)\mapsto T_re(\ui)$.
\end{lem}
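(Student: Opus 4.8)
The plan is to obtain the map by restricting a homomorphism between the ambient localised Hecke algebras. Recall that $\widehat H_{d,R}(q_e)$ is constructed, exactly as over a field, as a subalgebra of a localised Hecke algebra $H^{\mathrm{loc}}_{d,R}(q_e)=A_1^R\otimes_{R[X_1^{\pm1},\dots,X_d^{\pm1}]}H_{d,R}(q_e)$, with $A_1^R=\bigoplus_{\ui\in I^d}R[X_1^{\pm1},\dots,X_d^{\pm1}][(X_r-X_t)^{-1},(q_eX_r-X_t)^{-1}:r\ne t]\,e(\ui)$, and likewise $\widehat H^{\leqslant N}_{d,K}(q_e)$ sits inside the truncated localised Hecke algebra over $K$ whose idempotents are indexed by $d$-tuples over the vertex set $\widetilde I^{\leqslant N}$ of $\widetilde\Gamma^{\leqslant N}$. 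First I would write down an $R$-algebra homomorphism from $A_1^R$ to $\bigoplus_{\uj\in(\widetilde I^{\leqslant N})^d}K[X_1^{\pm1},\dots,X_d^{\pm1}][(X_r-X_t)^{-1},(q_eX_r-X_t)^{-1}:r\ne t]\,e(\uj)$ which sends, for each $\ui\in I^d$, the entire block carried by $e(\ui)$ diagonally onto the blocks carried by those $e(\uj)$ with $\pi_e(\uj)=\ui$, that is $f\,e(\ui)\mapsto\sum_{\pi_e(\uj)=\ui}f\,e(\uj)$ via $R\hookrightarrow K$. This is legitimate: the sum on the right is finite once we have truncated, and distinct blocks of $A_1^R$ are sent into disjoint families of target blocks, so the assignment is multiplicative. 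Tensoring over $R[X_1^{\pm1},\dots,X_d^{\pm1}]$ with $H_{d,R}(q_e)$ and then over $R$ with $K$, one extends it to a homomorphism of the two localised Hecke algebras; the only relation requiring attention beyond multiplicativity is the defining commutation relation for $T_r$, and it is preserved because $\pi_e$ commutes with $s_r$, so $e(\ui)$ and $e(s_r(\ui))$ go to $\sum_{\pi_e(\uj)=\ui}e(\uj)$ and $\sum_{\pi_e(\uj)=\ui}e(s_r(\uj))$ respectively and one merely sums the corresponding relation of the target over the fibre of $\pi_e$.

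Next I would restrict this homomorphism to $\widehat H_{d,R}(q_e)$ and verify that its image lies in $\widehat H^{\leqslant N}_{d,K}(q_e)$. The generators of $\widehat H_{d,R}(q_e)$ are the elements of $H_{d,R}(q_e)$, which map into $H_{d,K}(q_e)$; the idempotents $e(\ui)$, $\ui\in I^d$, which map to finite sums of idempotents of the truncation; and the localised elements $(X_r-X_t)^{-1}e(\ui)$ with $i_r\ne i_t$ together with $(q_eX_r-X_t)^{-1}e(\ui)$ with $q_ei_r\ne i_t$. For the latter two it is enough to note that whenever $\pi_e(\uj)=\ui$ one has $j_r\ne j_t$ — because $i_r\ne i_t$ already distinguishes the classes of $j_r$ and $j_t$ modulo $e$ — and $q_ej_r\ne j_t$ — because under $\widetilde I\simeq\scrF$ multiplication by $q_e$ adds $1$ to the first coordinate, so $q_ej_r=j_t$ would force $i_r+1\equiv i_t$ in $\bbZ/e\bbZ$, against $q_ei_r\ne i_t$. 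Hence every $(X_r-X_t)^{-1}e(\uj)$ and $(q_eX_r-X_t)^{-1}e(\uj)$ occurring in the images is an honest generator of $\widehat H^{\leqslant N}_{d,K}(q_e)$, and we obtain the algebra homomorphism $\widehat H_{d,R}(q_e)\to\widehat H^{\leqslant N}_{d,K}(q_e)$ with the stated action on $e(\ui)$, $X_re(\ui)$ and $T_re(\ui)$.

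For injectivity I would exploit faithful polynomial representations on both sides. The algebra $\widehat H_{d,R}(q_e)\subset H^{\mathrm{loc}}_{d,R}(q_e)$ acts faithfully on the $R$-module $A_1^R$ — the $R$-coefficient analogue of the polynomial representation of Section~\ref{ch3:subs_isom-KLR-Hecke-gen}, obtained from \cite[Prop.~3.11]{MS} by localisation and base change ($\bbC$ being a field, everything here is flat over it) — while the truncation $\widehat H^{\leqslant N}_{d,K}(q_e)$ acts faithfully on $M_K=\bigoplus_{\uj\in(\widetilde I^{\leqslant N})^d}K[X_1^{\pm1},\dots,X_d^{\pm1}][(X_r-X_t)^{-1},(q_eX_r-X_t)^{-1}:r\ne t]\,e(\uj)$, which is the direct summand of the faithful $\widehat H_{d,K}(q_e)$-module $A_1^K$ cut out by the surviving idempotents (recall $\widehat H^{\leqslant N}_{d,K}(q_e)$ is itself a direct summand of $\widehat H_{d,K}(q_e)$). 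The homomorphism built above is intertwined — on generators, hence, since all maps in sight are multiplicative, everywhere — by the $R$-linear map $\iota\colon A_1^R\to M_K$, $f\,e(\ui)\mapsto\sum_{\pi_e(\uj)=\ui}f\,e(\uj)$, which is injective because each block of $A_1^R$ embeds into any of its target blocks through $R\hookrightarrow K$. Consequently, if $h\in\widehat H_{d,R}(q_e)$ is sent to $0$, then $\iota$ intertwines the zero operator with the action of $h$ on $A_1^R$, so that action is $0$ and $h=0$ by faithfulness.

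The point carrying the real content is injectivity: the analogous map $H_{d,R}(q_e)\to\widehat H^{\leqslant N}_{d,K}(q_e)$ of plain affine Hecke algebras is \emph{not} injective, so the idempotents $e(\ui)$ with $\ui\in I^d$ are genuinely needed to separate elements, which is precisely what the intertwiner $\iota$ uses. Accordingly the one ingredient that should be spelled out with care is the faithfulness of the polynomial representation of $\widehat H_{d,R}(q_e)$ over the ring $R$ — equivalently, that a triangular basis of $\widehat H_{d,R}(q_e)$ (coming either from \cite[Thm.~2.5]{KL} transported through the localised Hecke/KLR comparison, or directly from the basis $\{X^{\mathbf{a}}T_w\}$ of the affine Hecke algebra, $q_e$ being a unit) acts by $R$-linearly independent operators; this is standard but is not literally one of the field-coefficient statements quoted earlier. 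The remaining verifications are purely formal.
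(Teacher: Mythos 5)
Your construction of the homomorphism is a fuller version of a step the paper dismisses as clear, and your check that the localized generators $(X_r-X_t)^{-1}e(\ui)$ and $(q_eX_r-X_t)^{-1}e(\ui)$ land in $\widehat H^{\leqslant N}_{d,K}(q_e)$ is right (modulo the harmless slip of writing $q_e i_r\ne i_t$ where $\zeta_e i_r\ne i_t$ is meant). Where you genuinely diverge from the paper is on injectivity. You argue through faithful polynomial representations intertwined by an injective map $\iota\colon A_1^R\to M_K$, $fe(\ui)\mapsto\sum_{\pi_e(\uj)=\ui}fe(\uj)$, whereas the paper proceeds directly from module freeness: it shows $\widehat H_{d,R}(q_e)$ is free over the partial localization $B=\bigoplus_{\ui\in\scrF_\bfk^d}R[X^{\pm}][(X_r-X_t)^{-1},(q_eX_r-X_t)^{-1};\ i_r\ne i_t \text{ resp. } \zeta_ei_r\ne i_t]e(\ui)$ with basis $\{T_w\}$, likewise for $\widehat H^{\leqslant N}_{d,K}(q_e)$ over the analogous $B'$, and then observes that the map sends the $B$-basis to a $B'$-linearly independent set because the images $\sum_{\pi_e(\uj)=\ui}T_we(\uj)$ have pairwise disjoint idempotent support. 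The paper's route is shorter and, crucially, requires no representation-theoretic input: it never has to know that the polynomial representation over the ring $R$ is faithful. You correctly flag this faithfulness-over-$R$ as the one thing your argument still needs; the field-coefficient statement from \cite[Prop.~3.11]{MS} is what you cite, and flatness over $\bbC$ alone does not carry faithfulness through base change, so this really is a gap you must fill. Your suggested fill — using the free basis $\{T_w\}$ (or $\{\Psi_w\}$ acting by permutations plus an Artin-type linear-independence argument) — is essentially what the paper does directly, so unpacking it would largely reconstruct the paper's proof; the detour through polynomial representations does not simplify matters here.
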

\begin{proof}

It is clear that we have an algebra homomorphism $\widehat H_{d, R}(q_{e})\to \widehat H^{\leqslant N}_{d, K}(q_{e})$ as in the statement. We only have to check the injectivity.

For each $w\in \frakS_d$ we have an element $T_w\in H_{d,R}(q)$ defined in the following way. We have $T_w=T_{i_1}\cdots T_{i_r}$, where $w=s_{i_1}\cdots s_{i_r}$ is a reduced expression. It is well-known that $T_w$ is independent of the choice of the reduced expression. Moreover, the algebra $H_{d,R}(q)$ is free over $R[X_1^{\pm 1},\cdots,X_d^{\pm 1}]$ with a basis $\{T_w;~w\in\frakS_d\}$.

Set
$$
B=\bigoplus_{\ui\in \scrF_\bfk^d}R[X_1^{\pm 1},\cdots,X_d^{\pm 1}][(X_r-X_t)^{-1},(q_eX_r-X_t)^{-1};~r\ne t]e(\ui),
$$
where we invert $(X_r-X_t)$ only if $i_r\ne i_t$ and we invert $(q_eX_r-X_t)$ only if $\zeta_ei_r\ne i_t$. We have $\widehat H_{d, R}(q_{e})=B\otimes_{R[X_1^{\pm 1},\cdots,X_d^{\pm 1}]} H_{d,R}(q_e)$. This implies that the $B$-module $\widehat H_{d, R}(q_{e})$ is free with a basis $\{T_w;~w\in\frakS_d\}$. 

Similarly, we can show that the algebra $\widehat H^{\leqslant N}_{d, K}(q_{e})$ is free (with a basis $\{T_w;~w\in\frakS_d\}$) over
$$
B'=\bigoplus_{\uj\in \scrF^d}K[X_1^{\pm 1},\cdots,X_d^{\pm 1}][(X_r-X_t)^{-1},(q_eX_r-X_t)^{-1};~r\ne t]e(\uj),
$$
where we invert $(X_r-X_t)$ only if $j_r\ne j_t$ and we invert $(q_eX_r-X_t)$ only if $q_ej_r\ne j_t$, and we take only $\uj$ that are supported on the vertices of the truncated quiver $\Gamma^{\leqslant N}$.

Now, the injectivity of the homomorphism follows from the fact that it takes a $B$-basis of $\widehat H_{d, R}(q_{e})$ to a $B'$-linearly independent set in $\widehat H^{\leqslant N}_{d, K}(q_{e})$.

\end{proof}

Now we define the algebra $\widehat{SH}_{\overline\alpha,\overline\bfk}(\zeta_{e+1})$ that is a Hecke analogue of a localization of the balanced KLR algebra $S_{\overline\alpha,\bfk}$. To do so, consider the idempotent $\bfe=\sum_{\ui\in \overline I^{\overline\alpha}_{\rm ord}}e(\ui)$ in $\widehat H_{\overline\alpha,\overline\bfk}(\zeta_{e+1})$. We set
$$
\widehat{SH}_{\overline\alpha,\overline\bfk}(\zeta_{e+1})=\bfe \widehat H_{\overline\alpha,\overline \bfk}(\zeta_{e+1})\bfe/\sum_{\uj\in \overline
I^{\overline\alpha}_{\rm un}}\bfe \widehat H_{\overline\alpha,\overline\bfk}(\zeta_{e+1})e(\uj)\widehat H_{\overline\alpha,\overline \bfk}(\zeta_{e+1})\bfe.
$$


Now, we define a similar algebra over $K$. To do this, we need to introduce some additional notation. 
Denote by $Q^+_{\widetilde I,{\rm eq}}$ the subset of $Q^+_{\widetilde I}$ that contains only $\widetilde\alpha$ such that for each $k\in \widetilde I_1$, the dimension vector $\widetilde\alpha$ has the same dimensions at vertices $k^1$ and $k^2$. 



Set
$$
\widehat{H}^{\leqslant N}_{\overline\alpha,\overline K}(q_{e+1})=\bigoplus_{\pi_{e+1}(\widetilde\alpha)=\overline{\alpha}}\widehat{H}_{\widetilde\alpha,\overline K}(q_{e+1}),
$$
$$
\widehat{SH}^{\leqslant N}_{\overline\alpha,\overline K}(q_{e+1})=\bigoplus_{\pi_{e+1}(\widetilde\alpha)=\overline{\alpha}}\widehat{SH}_{\widetilde\alpha,\overline K}(q_{e+1}),
$$
where in the sums we take only $\widetilde \alpha\in Q^+_{\widetilde I, {\rm eq}}$ that are supported on the vertices of the truncated quiver $\overline{\widetilde\Gamma}^{\leqslant N}$ and $\widehat{SH}_{\widetilde\alpha,\overline K}(q_{e+1})$ is defined similarly to $\widehat{SH}_{\overline\alpha,\overline\bfk}(\zeta_{e+1})$. More precisely, we have
$$
\widehat{SH}_{\widetilde\alpha,\overline K}(q_{e+1})=
\widetilde\bfe_{\widetilde\alpha} H_{\widetilde\alpha,\overline K}(q_{e+1})\widetilde\bfe_{\widetilde\alpha}/\sum_{\uj\in \widetilde I^{\widetilde\alpha}_{\rm un}}\widetilde\bfe_{\widetilde\alpha} H_{\widetilde\alpha,\overline K}(q_{e+1})e(\uj) H_{\widetilde\alpha,\overline K}(q_{e+1})\widetilde\bfe_{\widetilde\alpha},
$$
where $\widetilde \bfe_{\widetilde\alpha}=\sum_{\uj\in \widetilde I^{\widetilde\alpha}_{\rm ord}}e(\uj)$.

\begin{rk}
Consider the following idempotents in $\widehat{H}^{\leqslant N}_{\overline\alpha,\overline K}(q_{e+1})$: 
$$\widetilde\bfe=\sum_{\pi_{e+1}(\widetilde\alpha)=\overline\alpha}\widetilde\bfe_{\widetilde\alpha}, \qquad\bfe=\sum_{\ui\in \overline I^{\overline\alpha}_{\rm ord}}e(\ui),
$$ 
where the first sum is taken only by $\widetilde\alpha\in Q^+_{\widetilde I, {\rm eq}}$. (Note that $\widehat{H}^{\leqslant N}_{\overline\alpha,\overline K}(q_{e+1})$ was defined as a quotient of $\widehat{H}_{\overline\alpha,\overline K}(q_{e+1})$. So, if $\widetilde\alpha$ is not supported on $\overline{\widetilde \Gamma}^{\leqslant N}$, then the idempotent $\widetilde\bfe_{\widetilde\alpha}$  is zero by definition. In particular, the sum has a finite number of nonzero terms.)
Set also $\widetilde I^{\overline\alpha}=\coprod_{\pi_{e+1}(\widetilde\alpha)=\overline\alpha}\widetilde I^{\widetilde \alpha}$, where the sum is taken only by $\widetilde\alpha\in Q^+_{\widetilde I, {\rm eq}}$. By definition, the algebra $\widehat{SH}^{\leqslant N}_{\overline\alpha,\overline K}(q_{e+1})$ is a quotient of $\widetilde\bfe\widehat{H}^{\leqslant N}_{\overline\alpha,\overline K}(q_{e+1})\widetilde\bfe$. But we can see this algebra as the same quotient of $\bfe\widehat{H}^{\leqslant N}_{\overline\alpha,\overline K}(q_{e+1})\bfe$ (we do the quotient with respect to the same idempotents). Indeed, the idempotent $\bfe$ is a sum of a bigger number of standard idempotents $e(\uj)$, $\uj\in \widetilde I^{\overline\alpha}$ than the idempotent $\widetilde\bfe$. More precisely, the idempotent $\widetilde\bfe$ is the sum all $e(\uj)$ such that $\uj$ is well-ordered while $\bfe$ is the sum of all $e(\uj)$ such that $\pi_{e+1}(\uj)$ is well-ordered.  
But each $\uj\in \widetilde I^{\overline\alpha}$ such that $\pi_{e+1}(\uj)$ is well-ordered and $\uj$ is not well-ordered must be unordered. Then such $e(\uj)$ becomes zero after taking the quotient.

\end{rk}

\smallskip
Finally, we define the $R$-algebra $\widehat{SH}^N_{\overline\alpha,\overline R}(q_{e+1})$ as the image in $\widehat{SH}^{\leqslant N}_{\overline\alpha,\overline K}(q_{e+1})$ of the following composition of homomorphisms
$$
\bfe\widehat{H}_{\overline\alpha,\overline R}(q_{e+1})\bfe\to\bfe\widehat{H}^{\leqslant N}_{\overline\alpha,\overline K}(q_{e+1})\bfe\to\widehat{SH}^{\leqslant N}_{\overline\alpha,\overline K}(q_{e+1}).
$$

The lemma below shows that the algebra $\widehat{SH}^N_{\overline\alpha,\overline R}(q_{e+1})$ is independent of $N$ for $N$ large enough. So, we can write simply $\widehat{SH}_{\overline\alpha,\overline R}(q_{e+1})$ instead of $\widehat{SH}^N_{\overline\alpha,\overline R}(q_{e+1})$ for $N$ large enough.

\smallskip
\begin{lem}
\label{ch3:lem-SH_R_indep_N}
Assume $N\geqslant 2d$. Then the algebra $\widehat{SH}^N_{\overline\alpha,\overline R}(q_{e+1})$ is independent of $N$.
\end{lem}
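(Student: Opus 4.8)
The plan is to prove that the kernel of the defining composite
$$\bfe\widehat H_{\overline\alpha,\overline R}(q_{e+1})\bfe\to\bfe\widehat H^{\leqslant N}_{\overline\alpha,\overline K}(q_{e+1})\bfe\to\widehat{SH}^{\leqslant N}_{\overline\alpha,\overline K}(q_{e+1})$$
is independent of $N$ for $N\geqslant 2d$; since $\widehat{SH}^N_{\overline\alpha,\overline R}(q_{e+1})$ is by definition the image of this composite, that is the claim. Using $\widehat{SH}^{\leqslant N}_{\overline\alpha,\overline K}(q_{e+1})=\bigoplus_{\widetilde\alpha}\widehat{SH}_{\widetilde\alpha,\overline K}(q_{e+1})$, the sum over the \emph{admissible} $\widetilde\alpha$ (those $\widetilde\alpha\in Q^+_{\widetilde I,{\rm eq}}$ with $\pi_{e+1}(\widetilde\alpha)=\overline\alpha$ that are supported on $\overline{\widetilde\Gamma}^{\leqslant N}$), this kernel equals $\bigcap_{\widetilde\alpha}\ker\Phi_{\widetilde\alpha}$, where $\Phi_{\widetilde\alpha}\colon\bfe\widehat H_{\overline\alpha,\overline R}(q_{e+1})\bfe\to\widehat{SH}_{\widetilde\alpha,\overline K}(q_{e+1})$ is the $\widetilde\alpha$-component of the composite. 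Enlarging $N$ only enlarges the index set, so the kernel is non-increasing in $N$, and it is enough to show: every admissible $\widetilde\alpha$ admits an admissible $\widetilde\alpha'$ supported on $\overline{\widetilde\Gamma}^{\leqslant 2d}$ with $\ker\Phi_{\widetilde\alpha}=\ker\Phi_{\widetilde\alpha'}$.

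The companion $\widetilde\alpha'$ will come from a translation inside $\widetilde\Gamma=(\Gamma_\infty)^{\sqcup l}$; this is where the bound $2d$ enters, since an admissible $\widetilde\alpha$ has height $|\overline\alpha|=d+d_k\leqslant 2d$, so its support meets at most $2d$ vertices of $\widetilde I=\bbZ\times[1,l]$. I would decompose the support into its maximal runs of consecutive integers inside each $\bbZ$-component. Each run has fewer than $2d$ vertices; the pairs $\{(c,b),(c+1,b)\}$ with $c\equiv k~\mod~(e+1)$ appearing in the ${\rm eq}$-condition never straddle a run boundary; and translating a run by a multiple of $e+1$ preserves residues $\mod~(e+1)$ (hence $\pi_{e+1}(\widetilde\alpha)$), membership in $Q^+_{\widetilde I,{\rm eq}}$, the partition $\widetilde I=\widetilde I_0\sqcup\widetilde I_1\sqcup\widetilde I_2$, and all arrows between support vertices. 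Translating the runs independently one can pack them into $\overline{\widetilde\Gamma}^{\leqslant 2d}$ without creating new adjacencies (there is plenty of room, using $e\geqslant 2$), which produces $\widetilde\alpha'$ and a bijection $t\colon\widetilde I^{\widetilde\alpha}\to\widetilde I^{\widetilde\alpha'}$ that commutes with $\pi_{e+1}$ and sends well-ordered (resp.\ unordered) sequences to well-ordered (resp.\ unordered) ones.

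It remains to promote $t$ to an isomorphism $\widehat{SH}_{\widetilde\alpha,\overline K}(q_{e+1})\simeq\widehat{SH}_{\widetilde\alpha',\overline K}(q_{e+1})$ compatible with $\Phi_{\widetilde\alpha}$, $\Phi_{\widetilde\alpha'}$. Over the field $\overline K$, Propositions \ref{ch3:prop-isom_Hekce-KLR-loc} and \ref{ch3:prop-isom_Hekce-KLR-widehat} identify $\widehat H_{\widetilde\alpha,\overline K}(q_{e+1})$ and $\widehat H_{\widetilde\alpha',\overline K}(q_{e+1})$ with localized KLR algebras attached to the subquivers spanned by the two supports; since $t$ is an isomorphism of these subquivers — equality of, and arrows between, support vertices being all that the KLR relations and the inverted elements see — the relabelling $e(\uj)\mapsto e(t\uj)$, $x_r\mapsto x_r$, $\tau_r\mapsto\tau_r$ is an isomorphism, and it descends to the $\widehat{SH}$'s because $t$ respects well-ordered and unordered sequences. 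In Hecke coordinates this isomorphism carries $X_re(\uj)$ to $(\overline p_{t(j_r)}/\overline p_{j_r})X_re(t\uj)$, i.e.\ rescales the $X$'s by the units $q_{e+1}^{(e+1)m}$ of the various run-translations; since such a rescaling is induced by a diagonal automorphism of $\bfe\widehat H_{\overline\alpha,\overline R}(q_{e+1})\bfe$ and $\ker\Phi_{\widetilde\alpha'}$, being generated by relations homogeneous in the $X$'s (the unordered relations $q_{e+1}X_r=X_{r+1}$ on the relevant blocks and the relations merging the lifts of a fixed sequence), is stable under diagonal $X$-scalings, one gets $\ker\Phi_{\widetilde\alpha}=\ker\Phi_{\widetilde\alpha'}$. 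The delicate point — and the one I expect to be the main obstacle — is precisely this matching: when $\widetilde\alpha$ has several support vertices of the same residue lying in distinct runs, the rescaling factor attached to a lift of a sequence $\ui$ depends on the lift and not only on $\ui$, so either the absorbing automorphism must be taken on $\bfe\widehat H^{\leqslant N}_{\overline\alpha,\overline K}(q_{e+1})\bfe$, or one argues directly that the constraint imposed by a far-out $\widetilde\alpha$ is already forced by those coming from $\overline{\widetilde\Gamma}^{\leqslant 2d}$; either way one leans on the homogeneity of the defining relations under $X$-scalings and on the Hecke analogue of Lemma \ref{ch3:lem-red_number_idemp} to keep the generators of the kernel under control.
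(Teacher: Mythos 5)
Your reduction is the same as the paper's: identify $\widehat{SH}^N_{\overline\alpha,\overline R}(q_{e+1})$ with the image of the composite, observe $J_M\subset J_N$ trivially when $M>N$, and then reduce to showing that every admissible $\widetilde\alpha$ has a partner $\widetilde\alpha'$ supported on $\overline{\widetilde\Gamma}^{\leqslant 2d}$ with matching kernel. The combinatorics of the support (height $\leqslant 2d$, decomposing into runs, translating by multiples of $e+1$, packing without creating new adjacencies) is also fine and fills in a step the paper marks as ``clear''.

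The gap is in how you build the comparison isomorphism, and it is exactly the one you flag at the end. You pass through the KLR side: the quiver isomorphism between the two supports gives $e(\uj)\mapsto e(t\uj)$, $x_r\mapsto x_r$, $\tau_r\mapsto\tau_r$, and via Proposition \ref{ch3:prop-isom_Hekce-KLR-loc} this becomes $X_re(\uj)\mapsto(\overline p_{t(j_r)}/\overline p_{j_r})\,X_re(t\uj)$ in Hecke coordinates. Because $\Phi_{\widetilde\alpha}$ and $\Phi_{\widetilde\alpha'}$ both send $X_re(\ui)\mapsto\sum_{\uj}X_re(\uj)$, the rescaling factor is precisely what breaks compatibility with the $\Phi$'s whenever two lifts $\uj$ of the same $\ui$ live in differently-translated runs; your proposed repair via a diagonal automorphism of $\bfe\widehat H_{\overline\alpha,\overline R}(q_{e+1})\bfe$ cannot see the lift and so cannot absorb it, and you leave the alternative (``the constraint from a far-out $\widetilde\alpha$ is already forced'') unproven. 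The paper avoids the problem by \emph{not} going through the KLR presentation: it constructs the isomorphism $\widehat H_{\widetilde\alpha,\overline K}(q_{e+1})\simeq\widehat H_{\widetilde\alpha',\overline K}(q_{e+1})$ directly in Hecke generators, sending $e(\uj)\mapsto e(\uj')$ and \emph{fixing} $X_r$ and $T_r$. This is legitimate because the defining relations of $\widehat H$ see the idempotent labels only through the equalities $j_r=j_{r+1}$ (in the $T_r$--$e(\uj)$ commutation) and through the inversion conditions $j_r\ne j_t$, $q_{e+1}j_r\ne j_t$ --- all of which your carefully-chosen translation preserves. With $X_r$ and $T_r$ fixed, and $e(\ui)\mapsto\sum_{\uj}e(\uj)$ on both sides, compatibility with $\Phi_{\widetilde\alpha}$, $\Phi_{\widetilde\alpha'}$ is automatic and there is nothing left to absorb. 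Replacing your KLR-relabelling by this Hecke-relabelling closes the gap; the rest of your argument then goes through.
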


\begin{proof}
Denote by $J_N$ the kernel of $\bfe\widehat{H}_{\overline\alpha,\overline R}(q_{e+1})\bfe\to\widehat{SH}^{\leqslant N}_{\overline\alpha,\overline K}(q_{e+1})$. Take $M>N$. It is clear that we have $J_M\subset J_N$.

Let us show that we also have an opposite inclusion if $N\geqslant 2d$. We want to show that each element $x\in J_N$ is also in $J_M$. It is enough to show this for $x$ of the form $x=Xe(\ui)$, where $\ui\in I^{\overline\alpha}_{\rm ord}$ and $X$ is composed of the elements of the form $T_r$ and $X_r$.  Then $Xe(\ui)\in J_N$ means that the element $Xe(\uj)\in \widehat{SH}^{\leqslant N}_{\overline\alpha,\overline K}(q_{e+1})$ is zero for each $\uj\in \widetilde I^{\overline\alpha}$ supported on $\overline{\widetilde \Gamma}^{\leqslant N}$ such that $\pi_{e+1}(\uj)=\ui$. To show that we have $Xe(\ui)\in J_M$ we must check that the element $Xe(\uj)\in \widehat{SH}^{\leqslant M}_{\overline\alpha,\overline K}(q_{e+1})$ is zero for each $\uj\in \widetilde I^{\overline\alpha}$ supported on $\overline{\widetilde \Gamma}^{\leqslant M}$ such that $\pi_{e+1}(\uj)=\ui$.

Let $\widetilde\alpha\in Q^+_{\widetilde I,{\rm eq}}$ be such that $\uj\in \widetilde I^{\widetilde\alpha}$. It is clear that we can find $\widetilde\alpha'\in Q^+_{\widetilde I,{\rm eq}}$ supported on $\overline{\widetilde\Gamma}^{\leqslant 2d}$ such that we have an isomorphism $\widehat{H}_{\widetilde\alpha,\overline K}(q_{e+1})\simeq \widehat{H}_{\widetilde\alpha',\overline K}(q_{e+1})$ that induces an isomorphism $\widehat{SH}_{\widetilde\alpha,\overline K}(q_{e+1})\simeq \widehat{SH}_{\widetilde\alpha',\overline K}(q_{e+1})$ and such that this isomorphism preserves the generators $X_r$ and $T_r$ and sends the idempotent $e(\uj)$ to some idempotent  $e(\uj')$ such that $\uj'$ is supported on $\overline{\widetilde\Gamma}^{\leqslant 2d}$ and $\pi_{e+1}(\uj)=\pi_{e+1}(\uj')$. Then the element $Xe(\uj)\in \widehat{SH}^{\leqslant M}_{\overline\alpha,\overline K}(q_{e+1})$ is zero because $Xe(\uj')\in \widehat{SH}^{\leqslant M}_{\overline\alpha,\overline K}(q_{e+1})$ is zero. This implies $x\in J_M$. 

\end{proof}

\smallskip
Now we define the KLR versions of the algebras $\widehat{SH}_{\overline\alpha,\overline \bfk}(\zeta_{e+1})$ and $\widehat{SH}^{\leqslant N}_{\overline\alpha,\overline K}(q_{e+1})$.
As for the Hecke version, we denote by $\bfe$ the idempotent $\sum_{\ui\in \overline I^{\overline\alpha}_{\rm ord}}e(\ui)$ in $\widehat R_{\overline\alpha,\bfk}(\overline \Gamma)$. 
Set
$$
\widehat S_{\overline\alpha,\bfk}(\overline \Gamma)=\bfe\widehat R_{\overline\alpha,\bfk}(\overline \Gamma)\bfe/\sum_{\ui\in \overline I^{\overline\alpha}_{\rm un}}\bfe\widehat R_{\overline\alpha,\bfk}(\overline \Gamma)e(\ui)R_{\overline\alpha,\bfk}(\overline \Gamma)\bfe.
$$
For each $\widetilde\alpha\in Q^+_{\widetilde I,{\rm eq}}$ we consider the idempotent $\widetilde \bfe_{\widetilde\alpha}=\sum_{\uj\in \widetilde I^{\widetilde\alpha}_{\rm ord}}e(\uj)$ in $\widehat R_{\widetilde\alpha,K}(\overline {\widetilde\Gamma})$.
Set
$$
\widehat S_{\overline\alpha,K}(\overline {\widetilde\Gamma}^{\leqslant N})=\bigoplus_{\pi_{e+1}(\widetilde\alpha)=\overline\alpha}\widehat S_{\widetilde\alpha,K}(\overline {\widetilde\Gamma}),
$$
where we take only $\widetilde \alpha\in Q^+_{\widetilde I,{\rm eq}}$ that are supported on the vertices of the truncated quiver $\overline{\widetilde\Gamma}^{\leqslant N}$
and
$$
\widehat S_{\widetilde\alpha,K}(\overline {\widetilde\Gamma})=\widetilde\bfe_{\widetilde\alpha}\widehat R_{\widetilde\alpha,K}(\overline {\widetilde\Gamma})\widetilde\bfe_{\widetilde\alpha}/\sum_{\uj\in \widetilde I^{\widetilde\alpha}_{\rm un}}\widetilde\bfe_{\widetilde\alpha}\widehat R_{\widetilde\alpha,K}(\overline {\widetilde\Gamma})e(\uj)R_{\widetilde\alpha,K}(\overline {\widetilde\Gamma})\widetilde\bfe_{\widetilde\alpha}.
$$



\smallskip
\begin{rk}
\label{ch3:rk_4-isom-KLR-Hecke}

By Proposition \ref{ch3:prop-isom_Hekce-KLR-widehat} we have algebra isomorphisms
$$
\widehat R_{\alpha,\bfk}(\Gamma) \simeq \widehat H_{\alpha,\bfk}(\zeta_e),
\quad \widehat R_{\alpha,K}(\widetilde \Gamma^{\leqslant N})\simeq \widehat H^{\leqslant N}_{\alpha,K}(q_{e}),
$$
$$
\widehat R_{\overline\alpha,\bfk}(\overline \Gamma)\simeq \widehat H_{\overline\alpha,\overline\bfk}(\zeta_{e+1}), \quad \widehat R_{\overline\alpha,K}(\overline {\widetilde\Gamma}^{\leqslant N})\simeq \widehat H^{\leqslant N}_{\overline\alpha,\overline K}(q_{e+1}),
$$
from which we deduce the isomorphisms
$$
\widehat S_{\overline\alpha,\bfk}(\overline \Gamma)\simeq \widehat {SH}_{\overline\alpha,\overline\bfk}(\zeta_{e+1}), \quad \widehat S_{\overline\alpha,K}(\overline {\widetilde\Gamma}^{\leqslant N})\simeq \widehat {SH}^{\leqslant N}_{\overline\alpha,\overline K}(q_{e+1}).
$$
\end{rk}

\smallskip
We may use these isomorphisms without mentioning them explicitly. Using the identifications above between KLR algebras and Hecke algebras, a localization of the isomorphism in Theorem \ref{ch3:thm_KLR-e-e+1} yields an isomorphism
$$
\Phi_{\alpha,\bfk}\colon \widehat H_{\alpha,\bfk}(\zeta_e)\to \widehat {SH}_{\overline\alpha,\overline\bfk}(\zeta_{e+1}).
$$
In the same way we also obtain an algebra isomorphism
$$
\Phi_{\widetilde\alpha,K}\colon \widehat H_{\widetilde\alpha,K}(q_e)\to \widehat{SH}_{\widetilde\phi(\widetilde\alpha),\overline K}(q_{e+1})
$$
for each $\widetilde\alpha\in Q^+_{\widetilde I}$.
Taking the sum over all $\widetilde\alpha\in Q^+_{\widetilde I}$ such that $\pi_{e}(\widetilde\alpha)=\alpha$ and such that $\widetilde\alpha$ is supported on the vertices of the truncated quiver $\widetilde \Gamma^{\leqslant N}$ yields an isomorphism
$$
\Phi_{\alpha,K}\colon \widehat H^{\leqslant N}_{\alpha,K}(q_e) \to \widehat{SH}^{\leqslant N}_{\overline\alpha,\overline K}(q_{e+1}).
$$

\begin{lem}
The homomorphism $\bfe\widehat{H}_{\overline\alpha,\overline R}(q_{e+1})\bfe\to \bfe\widehat{H}_{\overline\alpha,\overline\bfk}(\zeta_{e+1})\bfe$
factors through a homomorphism $\widehat{SH}_{\overline\alpha,\overline R}(q_{e+1})\to \widehat{SH}_{\overline\alpha,\overline\bfk}(\zeta_{e+1})$.
\end{lem}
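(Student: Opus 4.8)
The plan is to show that the kernel of the surjection $\bfe\widehat H_{\overline\alpha,\overline R}(q_{e+1})\bfe\to\bfe\widehat H_{\overline\alpha,\overline\bfk}(\zeta_{e+1})\bfe$ is carried into the defining ideal of $\widehat{SH}_{\overline\alpha,\overline\bfk}(\zeta_{e+1})$, so that the composite $\bfe\widehat H_{\overline\alpha,\overline R}(q_{e+1})\bfe\to\bfe\widehat H_{\overline\alpha,\overline\bfk}(\zeta_{e+1})\bfe\to\widehat{SH}_{\overline\alpha,\overline\bfk}(\zeta_{e+1})$ kills the ideal $\widetilde J_R$ that defines $\widehat{SH}_{\overline\alpha,\overline R}(q_{e+1})$ (recall the latter is the \emph{image} of $\bfe\widehat H_{\overline\alpha,\overline R}(q_{e+1})\bfe$ in $\widehat{SH}^{\leqslant N}_{\overline\alpha,\overline K}(q_{e+1})$, i.e. $\widehat{SH}_{\overline\alpha,\overline R}(q_{e+1})=\bfe\widehat H_{\overline\alpha,\overline R}(q_{e+1})\bfe/\widetilde J_R$ for a suitable ideal $\widetilde J_R$ described below). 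Concretely, I must check that the generators of $\widetilde J_R$ specialize, under $\overline R\to\overline\bfk$, into the ideal $\sum_{\uj\in\overline I^{\overline\alpha}_{\rm un}}\bfe\widehat H_{\overline\alpha,\overline\bfk}(\zeta_{e+1})e(\uj)\widehat H_{\overline\alpha,\overline\bfk}(\zeta_{e+1})\bfe$.

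First I would identify $\widetilde J_R$ explicitly. By Lemma~\ref{ch3:lem-inj_R_K_H} we have an injection $\widehat H_{d,\overline R}(q_{e+1})\hookrightarrow\widehat H^{\leqslant N}_{d,\overline K}(q_{e+1})$ sending $e(\ui)$ (for $\ui\in\overline I^d$) to $\sum_{\pi_{e+1}(\uj)=\ui}e(\uj)$, and under this map $\bfe$ (the sum of $e(\ui)$ over $\ui\in\overline I^{\overline\alpha}_{\rm ord}$) goes to the sum of $e(\uj)$ over $\uj\in\widetilde I^{\overline\alpha}$ with $\pi_{e+1}(\uj)$ well-ordered. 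By the Remark preceding Lemma~\ref{ch3:lem-SH_R_indep_N}, the algebra $\widehat{SH}^{\leqslant N}_{\overline\alpha,\overline K}(q_{e+1})$ is the quotient of $\bfe\widehat H^{\leqslant N}_{\overline\alpha,\overline K}(q_{e+1})\bfe$ by $\sum_{\uj}\bfe\widehat H^{\leqslant N}_{\overline\alpha,\overline K}(q_{e+1})e(\uj)\widehat H^{\leqslant N}_{\overline\alpha,\overline K}(q_{e+1})\bfe$, where $\uj$ runs over unordered sequences in $\widetilde I^{\overline\alpha}$. Hence $\widetilde J_R$ is generated by elements of the form $\bfe\, a\, e(\uj)\, b\,\bfe$ with $a,b\in\widehat H_{\overline\alpha,\overline R}(q_{e+1})$ and with $e(\uj)$, $\uj\in\widetilde I^{\overline\alpha}$ unordered, where the subtlety is that $e(\uj)$ as an element of $\widehat H_{\overline\alpha,\overline R}(q_{e+1})$ need not be a standard idempotent: it equals $e(\ui)$ for $\ui=\pi_{e+1}(\uj)\in\overline I^{\overline\alpha}$, and $e(\ui)\mapsto\sum_{\pi_{e+1}(\uj')=\ui}e(\uj')$. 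Nonetheless, a generator $\bfe\,a\,e(\uj)\,b\,\bfe\in\widehat H_{\overline\alpha,\overline R}(q_{e+1})$ that maps into $\widetilde J_K$ must, by freeness over the relevant base ring (as in the proof of Lemma~\ref{ch3:lem-inj_R_K_H}), be an $\overline R$-combination of products $\bfe\,a'\,e(\ui)\,b'\,\bfe$ with $\ui\in\overline I^{\overline\alpha}$ unordered (in the sense of Remark~\ref{ch3:rk_def-ordered-Ibar}), precisely because $\pi_{e+1}$ sends unordered sequences to unordered sequences and no term with $\pi_{e+1}(\uj)$ well-ordered can survive.

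Once $\widetilde J_R$ is written in terms of the standard idempotents $e(\ui)$, $\ui\in\overline I^{\overline\alpha}_{\rm un}$, the conclusion is immediate: applying the ring map $\overline R\to\overline\bfk$ to a generator $\bfe\,a\,e(\ui)\,b\,\bfe$ of $\widetilde J_R$ yields $\bfe\,\overline a\,e(\ui)\,\overline b\,\bfe$ with $\overline a,\overline b\in\widehat H_{\overline\alpha,\overline\bfk}(\zeta_{e+1})$ and $\ui$ unordered, which by definition lies in the defining ideal of $\widehat{SH}_{\overline\alpha,\overline\bfk}(\zeta_{e+1})$. Therefore the composite $\bfe\widehat H_{\overline\alpha,\overline R}(q_{e+1})\bfe\to\widehat{SH}_{\overline\alpha,\overline\bfk}(\zeta_{e+1})$ annihilates $\widetilde J_R$ and factors through $\widehat{SH}_{\overline\alpha,\overline R}(q_{e+1})=\bfe\widehat H_{\overline\alpha,\overline R}(q_{e+1})\bfe/\widetilde J_R$, giving the desired homomorphism.

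The main obstacle is the bookkeeping in the middle step: verifying carefully that a product $\bfe\,a\,e(\uj)\,b\,\bfe\in\widehat H_{\overline\alpha,\overline R}(q_{e+1})$, once pushed to $\widehat H^{\leqslant N}_{\overline\alpha,\overline K}(q_{e+1})$, decomposes over the idempotents $e(\uj')$ with $\pi_{e+1}(\uj')$ well-ordered in such a way that its $\overline R$-preimage can be rewritten using only standard idempotents $e(\ui)$ of $\widehat H_{\overline\alpha,\overline R}(q_{e+1})$ indexed by \emph{unordered} $\ui\in\overline I^{\overline\alpha}$. This uses the freeness of $\widehat H_{d,\overline R}(q_{e+1})$ over its polynomial-localization base ring $B$ together with a basis $\{T_w\}$, exactly as in Lemma~\ref{ch3:lem-inj_R_K_H}, so that idempotent components can be separated cleanly; and it uses the compatibility $\pi_{e+1}(\widetilde I^{\widetilde\alpha}_{\rm un})$ being contained in the preimage of the unordered sequences of $\overline I^{\overline\alpha}$, which is clear from Remark~\ref{ch3:rk_def-ordered-Ibar}. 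Everything else is formal manipulation of two-sided ideals and base change.
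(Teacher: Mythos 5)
Your proposal goes in a genuinely different direction from the paper, but it has a real gap at the step you yourself flag as the ``main obstacle,'' and that gap is not closed by the freeness argument you invoke.

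Recall that $\widehat{SH}_{\overline\alpha,\overline R}(q_{e+1})$ is \emph{defined} as the image of $\bfe\widehat H_{\overline\alpha,\overline R}(q_{e+1})\bfe$ inside $\widehat{SH}^{\leqslant N}_{\overline\alpha,\overline K}(q_{e+1})$, so the ideal $\widetilde J_R$ you want to kill is the \emph{full preimage} of the two-sided ideal $\widetilde J_K=\sum_{\uj\,\mathrm{unordered}}\bfe\widehat H^{\leqslant N}_{\overline\alpha,\overline K}(q_{e+1})e(\uj)\widehat H^{\leqslant N}_{\overline\alpha,\overline K}(q_{e+1})\bfe$, not merely the ideal of $\bfe\widehat H_{\overline\alpha,\overline R}(q_{e+1})\bfe$ generated by the coarse idempotents $e(\ui)$ with $\ui\in\overline I^{\overline\alpha}_{\rm un}$. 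Those two ideals may a priori differ. Your claim that every element of $\widetilde J_R$ ``must, by freeness, be an $\overline R$-combination of products $\bfe\,a'\,e(\ui)\,b'\,\bfe$ with $\ui$ unordered'' is exactly the inclusion $\widetilde J_R\subset\sum_{\ui\,\mathrm{unordered}}\bfe\widehat H_{\overline\alpha,\overline R}(q_{e+1})e(\ui)\widehat H_{\overline\alpha,\overline R}(q_{e+1})\bfe$, and the $T_w$-basis argument from Lemma~\ref{ch3:lem-inj_R_K_H} does not give this: the $T_w$-basis lets you separate the coarse $e(\ui)$-components, but an element lying in $\widetilde J_K$ is constrained only over the \emph{fine} idempotents $e(\uj)$, $\uj\in\widetilde I^{\overline\alpha}$, and those fine components do not individually lie in the $\overline R$-subalgebra. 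Put differently, knowing that $x$ dies after localizing to $\overline K$ and fine-splitting does not tell you that $x$ can be rewritten over $\overline R$ using only coarse unordered idempotents. (Also, the intermediate assertion that the image of $e(\uj)$, for $\uj$ a fine unordered index, ``equals $e(\ui)$ for $\ui=\pi_{e+1}(\uj)$'' is false: $e(\ui)$ maps to the \emph{sum} $\sum_{\pi_{e+1}(\uj')=\ui}e(\uj')$, not a single $e(\uj)$.)

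The paper closes this gap by an entirely different mechanism: it identifies each of the three kernels as the annihilator of a faithful polynomial representation ($\widehat\Pol_\bfk$ over $\bfk$, $\widehat\Pol_K^{\leqslant N}$ over $K$, using Lemma~\ref{ch3:lem_pol-rep-of-S-defined+faith} and Proposition~\ref{ch3:prop-isom_Hekce-KLR-widehat}), and then exhibits an $\overline R$-lattice $\widehat\Pol_R\subset\widehat\Pol_K^{\leqslant N}$ stable under $\bfe\widehat H_{\overline\alpha,\overline R}(q_{e+1})\bfe$ with $\bfk\otimes_R\widehat\Pol_R=\widehat\Pol_\bfk$. An element of $\widetilde J_R$ then annihilates $\widehat\Pol_K^{\leqslant N}$, hence annihilates $\widehat\Pol_R$, hence its reduction modulo the maximal ideal annihilates $\widehat\Pol_\bfk$, hence lies in the kernel over $\bfk$. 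That argument produces the desired one-way containment of kernels without ever needing the problematic equality $\widetilde J_R=\sum_{\ui}\bfe\widehat H_{\overline\alpha,\overline R}(q_{e+1})e(\ui)\widehat H_{\overline\alpha,\overline R}(q_{e+1})\bfe$. If you want to salvage your approach, you would have to either prove that equality directly (which seems to require the same polynomial-representation machinery anyway) or switch to the annihilator characterization.
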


\begin{proof}[Proof]
In Section \ref{ch3:subs_pol-rep-BKLR} we constructed a faithful polynomial representation of $S_{\overline\alpha,\bfk}$. Let us call it $\Pol_\bfk$. It is constructed as a quotient of the standard polynomial representation of $\bfe R_{\overline\alpha,\bfk}\bfe$. After localization we get a faithful representation $\widehat\Pol_\bfk$ of $\widehat S_{\overline\alpha,\bfk}$. Thus the kernel of the algebra homomorphism  $\bfe \widehat R_{\overline\alpha,\bfk}\bfe\to \widehat S_{\overline\alpha,\bfk}$ is the annihilator of the representation $\widehat\Pol_\bfk$. We can transfer this to the Hecke side (because the isomorphism in Proposition \ref{ch3:prop-isom_Hekce-KLR-widehat} comes from the identification of the polynomial representations) and we obtain that the kernel of the algebra homomorphism  $\bfe \widehat H_{\overline\alpha,\overline\bfk}(\zeta_{e+1})\bfe\to \widehat {SH}_{\overline\alpha,\overline\bfk}(\zeta_{e+1})$ is the annihilator of the representation $\widehat\Pol_\bfk$. Similarly, we can characterize the kernel of the algebra homomorphism $\bfe \widehat H^{\leqslant N}_{\overline\alpha,\overline K}(q_{e+1})\bfe\to \widehat{SH}^{\leqslant N}_{\overline\alpha,\overline K}(q_{e+1})$ as the annihilator of a similar representation $\widehat\Pol^{\leqslant N}_K$.

The $K$-vector space $\widehat\Pol^{\leqslant N}_K$ has an $R$-submodule $\widehat\Pol_R$ stable by the action of $\bfe\widehat H_{\overline\alpha,\overline R}(q_{e+1})\bfe$ such that $\bfk\otimes_R \widehat\Pol_R=\widehat\Pol_\bfk$ and it is compatible with the algebra homomorphism $\bfe \widehat H_{\overline\alpha,\overline R}(q_{e+1})\bfe\to \bfe \widehat H_{\overline\alpha,\overline \bfk}(\zeta_{e+1})\bfe$. By definition of $\widehat{SH}_{\overline\alpha,\overline R}(q_{e+1})$ and the discussion above, the kernel of the algebra homomorphism $\bfe \widehat H_{\overline\alpha,\overline R}(q_{e+1})\bfe\to \widehat{SH}_{\overline\alpha,\overline R}(q_{e+1})$ is formed by the elements that act by zero on $\widehat\Pol^{\leqslant N}_K$ (we assume that $N$ is big enough). Thus each element of this kernel acts by zero on $\widehat\Pol_R$. This implies, that an element of the kernel of $\bfe \widehat H_{\overline\alpha,\overline R}(q_{e+1})\bfe\to \widehat{SH}_{\overline\alpha,\overline R}(q_{e+1})$ specializes to an element of the kernel of $\bfe \widehat H_{\overline\alpha,\overline \bfk}(\zeta_{e+1})\bfe\to \widehat{SH}_{\overline\alpha,\overline \bfk}(\zeta_{e+1})$. This proves the statement.
\end{proof}

\subsection{The deformation of the isomorphism $\Phi$}
\label{ch3:subs_deform-Phi}

\smallskip
\begin{prop}
\label{ch3:prop_morph-Phi-over-ring}
There is a unique algebra homomorphism $\Phi_{\alpha,R}\colon \widehat H_{\alpha,R}(q_e)\to \widehat{SH}_{\overline\alpha,R}(q_{e+1})$ such that the following diagram is commutative
$$
\begin{CD}
\widehat H_{\alpha,\bfk}(\zeta_e) @>{\Phi_{\alpha,\bfk}}>> \widehat{SH}_{\overline\alpha,\overline\bfk}(\zeta_{e+1})\\
@AAA                         @AAA\\
\widehat H_{\alpha,R}(q_e) @>{\Phi_{\alpha,R}}>> \widehat{SH}_{\overline\alpha,\overline R}(q_{e+1})\\
@VVV                         @VVV\\
\widehat H^{\leqslant N}_{\alpha,K}(q_e) @>{\Phi_{\alpha,K}}>> \widehat{SH}^{\leqslant N}_{\overline\alpha,\overline K}(q_{e+1}).\\
\end{CD}
$$
\end{prop}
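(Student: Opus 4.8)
The plan is to realise $\Phi_{\alpha,R}$ as a restriction of the isomorphism $\Phi_{\alpha,K}$ over the fraction field, exploiting that the two downward vertical maps out of the $R$-row are injective. By Lemma~\ref{ch3:lem-inj_R_K_H} the natural map $\widehat H_{\alpha,R}(q_e)\to\widehat H^{\leqslant N}_{\alpha,K}(q_e)$ is injective, and by the very definition of $\widehat{SH}_{\overline\alpha,\overline R}(q_{e+1})$ (the image of $\bfe\widehat H_{\overline\alpha,\overline R}(q_{e+1})\bfe$ in $\widehat{SH}^{\leqslant N}_{\overline\alpha,\overline K}(q_{e+1})$, which is $N$-independent for $N\geqslant 2d$ by Lemma~\ref{ch3:lem-SH_R_indep_N}) the map $\widehat{SH}_{\overline\alpha,\overline R}(q_{e+1})\hookrightarrow\widehat{SH}^{\leqslant N}_{\overline\alpha,\overline K}(q_{e+1})$ is injective. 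Since the lower square must commute, any $\Phi_{\alpha,R}$ is then forced: it must be the factorisation of $\Phi_{\alpha,K}\circ(\text{inclusion})$ through $\widehat{SH}_{\overline\alpha,\overline R}(q_{e+1})\hookrightarrow\widehat{SH}^{\leqslant N}_{\overline\alpha,\overline K}(q_{e+1})$. This settles uniqueness, and reduces existence to a single statement.

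Namely, I would show $\Phi_{\alpha,K}\bigl(\widehat H_{\alpha,R}(q_e)\bigr)\subseteq\widehat{SH}_{\overline\alpha,\overline R}(q_{e+1})$ inside $\widehat{SH}^{\leqslant N}_{\overline\alpha,\overline K}(q_{e+1})$; then $\Phi_{\alpha,R}$ is defined as the corestriction and is automatically an algebra homomorphism, since $\Phi_{\alpha,K}$ is. As $\widehat H_{\alpha,R}(q_e)$ is generated over $R$ by the idempotents $e(\ui)$, the elements $X_r^{\pm1}e(\ui)$ and $T_re(\ui)$, and the localising elements $(X_r-X_t)^{-1}e(\ui)$, $(q_eX_r-X_t)^{-1}e(\ui)$, and $\Phi_{\alpha,K}$ is a ring homomorphism, it suffices to check that each such generator is carried into the image of $\bfe\widehat H_{\overline\alpha,\overline R}(q_{e+1})\bfe$. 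For this I would use the explicit description of $\Phi$ from Theorem~\ref{ch3:thm_KLR-e-e+1}, transported to Hecke algebras via Proposition~\ref{ch3:prop-isom_Hekce-KLR-widehat}, together with Lemma~\ref{ch3:lem_morph-Phi-intert-op}: $e(\ui)\mapsto e(\phi(\ui))$ (matching refined idempotents, using $\pi_{e+1}\circ\widetilde\phi=\phi\circ\pi_e$ and that unordered lifts of well-ordered sequences vanish in $\widehat{SH}^{\leqslant N}$); $X_r^{\pm1}e(\ui)\mapsto c^{\pm1}X_{r'}^{\pm1}e(\phi(\ui))$ for a unit $c\in\overline R$ (in fact a power of $q_e/q_{e+1}$, by the choice of parameters in Section~\ref{ch3:subs_param-Hecke}); $T_re(\ui)$, rewritten when $i_r\not\equiv i_{r+1}$ via $(X_r-X_{r+1})^{-1}e(\ui)$ and the identity $\widetilde\Psi_r=(q_eX_r-X_{r+1})\Psi_r$, and when $i_r\equiv i_{r+1}$ directly as a polynomial with $R$-coefficients in the KLR generators (over $\widehat R_{\alpha,K}(\widetilde\Gamma^{\leqslant N})$), goes by Lemma~\ref{ch3:lem_morph-Phi-intert-op} to an $\overline R$-expression in the generators $e(\uj)$, $X_s^{\pm1}e(\uj)$, $\widetilde\Psi_se(\uj)$, $\Psi_se(\uj)$ and localising elements of $\widehat H_{\overline\alpha,\overline R}(q_{e+1})$; the localising elements themselves are treated in the same way. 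In each case the image visibly lies in $\bfe\widehat H_{\overline\alpha,\overline R}(q_{e+1})\bfe$.

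It then remains to see both squares commute. The lower one does by construction. For the upper one, both $\overline{(\cdot)}\circ\Phi_{\alpha,R}$ and $\Phi_{\alpha,\bfk}\circ\overline{(\cdot)}$, where $\overline{(\cdot)}$ denotes reduction modulo the maximal ideal of $R$, resp.\ of $\overline R$, are algebra homomorphisms $\widehat H_{\alpha,R}(q_e)\to\widehat{SH}_{\overline\alpha,\overline\bfk}(\zeta_{e+1})$, and they agree on the generators listed above because the explicit formulas for $\Phi$ on generators are compatible with the specialisations $q_e\mapsto\zeta_e$, $q_{e+1}\mapsto\zeta_{e+1}$ (equivalently, $\Phi_{\alpha,\bfk}$ is given by ``the same formulas'' over the residue field). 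Two ring homomorphisms that coincide on a generating set are equal, so the square commutes.

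The main obstacle is the existence step, and within it the cases of $T_re(\ui)$ and the localising elements. One must run through the case distinction modulo $e$ ($i_r\equiv i_{r+1}$, $i_r-1\equiv i_{r+1}$, generic), apply Lemma~\ref{ch3:lem_morph-Phi-intert-op}, and verify the two points that the parameter bookkeeping of Section~\ref{ch3:subs_param-Hecke} is designed to guarantee: every intertwiner $\Psi_s$ occurring on the $\overline\Gamma$-side genuinely belongs to $\widehat H_{\overline\alpha,\overline R}(q_{e+1})$ (i.e.\ the relevant vertex labels are not related by multiplication by $q_{e+1}$), and every scalar coefficient appearing in the computation is a unit in $\overline R$. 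Everything else is formal manipulation with the generating set and the two injectivity statements.
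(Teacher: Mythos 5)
Your proposal is correct and follows the same basic strategy as the paper: use injectivity of the two maps out of the $R$-row (Lemmas~\ref{ch3:lem-inj_R_K_H} and \ref{ch3:lem-SH_R_indep_N}) to force uniqueness and to reduce existence to the containment $\Phi_{\alpha,K}(\widehat H_{\alpha,R}(q_e))\subseteq\widehat{SH}_{\overline\alpha,\overline R}(q_{e+1})$, then check this on a generating set using the explicit formulas (in particular Lemma~\ref{ch3:lem_morph-Phi-intert-op}), and finally observe that specialisation compatibility on generators gives commutativity of the upper square.

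The one genuine difference is organisational. The paper first passes to the fully localised algebras $H^{\rm loc}_{\alpha,\bullet}(q_e)$ and $SH^{\rm loc}_{\overline\alpha,\bullet}(q_{e+1})$, in which \emph{every} intertwiner $\Psi_re(\ui)$ is defined; it establishes the analogue of the commutative diagram there (checking on the generators $e(\ui)$, $X_re(\ui)$, $\Psi_re(\ui)$, with the $\Psi_r$-case handled by Lemma~\ref{ch3:lem_morph-Phi-intert-op}), and only at the end restricts to $\widehat H$ and $\widehat{SH}$. Your version works directly in the ``hatted'' algebras, which makes the generator set less uniform: you have to split into cases according to whether $i_r\equiv i_{r+1}$ or $i_r-1\equiv i_{r+1}$ and pass between $T_r$, $\Psi_r$, and $\widetilde\Psi_r$ as you describe, but you thereby land the images directly in $\widehat{SH}_{\overline\alpha,\overline R}(q_{e+1})$ rather than deducing it by restriction. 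The paper's detour makes the $\Psi_r$-calculation cleaner at the cost of a terse final restriction step; your route is a bit more case-heavy but gives the target containment more explicitly, and it also states the uniqueness argument (via the two injectivities) in full, which the paper leaves implicit. Either way, the substantive input — the explicit formulas from Theorem~\ref{ch3:thm_KLR-e-e+1}, the Hecke/KLR dictionary of Propositions~\ref{ch3:prop-isom_Hekce-KLR-loc}–\ref{ch3:prop-isom_Hekce-KLR-widehat}, and Lemma~\ref{ch3:lem_morph-Phi-intert-op} — is the same.
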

\begin{proof}[Proof]
First we consider the algebras $H_{\alpha,\bfk}^{\rm loc}(\zeta_e)$, $H_{\alpha,R}^{\rm loc}(q_e)$ and $H_{\alpha,K}^{\rm loc,\leqslant N}(q_e)$ obtained from $\widehat H_{\alpha,\bfk}(\zeta_e)$, $\widehat H_{\alpha,R}(q_e)$ and $\widehat H^{\leqslant N}_{\alpha,K}(q_e)$ by inverting
\begin{itemize}
\item[\textbullet] $(X_r-X_t)$ and $(\zeta_eX_r-X_t)$ with $r\ne t$,
\item[\textbullet] $(X_r-X_t)$ and $(q_eX_r-X_t)$ with $r\ne t$,
\item[\textbullet] $(X_r-X_t)$ and $(q_eX_r-X_t)$ with $r\ne t$
\end{itemize}
respectively. Let $SH^{\rm loc}_{\overline\alpha,\overline\bfk}(\zeta_{e+1})$ and $SH^{\rm loc,\leqslant N}_{\overline\alpha,\overline K}(q_{e+1})$ be the localizations of $\widehat{SH}_{\overline\alpha,\overline\bfk}(\zeta_{e+1})$ and $\widehat{SH}^{\leqslant N}_{\overline\alpha,\overline K}(q_{e+1})$ such that the isomorphisms $\Phi_{\alpha,\bfk}$ and $\Phi_{\alpha,K}$ above induce isomorphisms
$$
\Phi_{\alpha,\bfk}\colon H^{\rm loc}_{\alpha,\bfk}(\zeta_e)\to{SH}^{\rm loc}_{\overline\alpha,\overline\bfk}(\zeta_{e+1})\\
$$
$$
\Phi_{\alpha,K}\colon H^{\rm loc,\leqslant N}_{\alpha,K}(q_e)\to{SH}^{\rm loc,\leqslant N}_{\overline\alpha,\overline K}(q_{e+1})\\.
$$
Let $SH^{\rm loc}_{\overline\alpha,\overline R}(q_{e+1})$ be the image in $SH^{\rm loc,\leqs N}_{\overline\alpha,\overline K}(q_{e+1})$ of the following composition of homomorphisms
$$
\bfe{H}^{\rm loc}_{\overline\alpha,\overline R}(q_{e+1})\bfe\to\bfe{H}^{\rm loc,\leqslant N}_{\overline\alpha,\overline K}(q_{e+1})\bfe\to{SH}^{\rm loc,\leqslant N}_{\overline\alpha,\overline K}(q_{e+1}).
$$
(We assume $N\geqslant 2d$. Then, similarly to Lemma \ref{ch3:lem-SH_R_indep_N}, the algebra $SH^{\rm loc}_{\overline\alpha,\overline R}$ is independent of $N$ under this assumption.)



Next, we want to prove that there exists an algebra homomorphism $\Phi_{\alpha,R}\colon H_{\alpha,R}^{\rm loc}(q_e)\to SH_{\overline\alpha,\overline R}^{\rm loc}(q_{e+1})$ such that the following diagram is commutative:

\begin{equation}
\label{ch3:eq_diag-localizations}
\begin{CD}
H^{\rm loc}_{\alpha,\bfk}(\zeta_e) @>{\Phi_{\alpha,\bfk}}>> SH^{\rm loc}_{\overline\alpha,\overline\bfk}(\zeta_{e+1})\\
@AAA                         @AAA\\
H^{\rm loc}_{\alpha,R}(q_e) @>{\Phi_{\alpha,R}}>> SH^{\rm loc}_{\overline\alpha,\overline R}(q_{e+1})\\
@VVV                         @VVV\\
H^{\rm loc,\leqslant N}_{\alpha,K}(q_e) @>{\Phi_{\alpha,K}}>> SH^{\rm loc,\leqslant N}_{\overline\alpha,\overline K}(q_{e+1}).\\
\end{CD}
\end{equation}

\vspace{1cm}

We just need to check that the map $\Phi_{\alpha,K}$ takes an element of $H^{\rm loc}_{\alpha,R}(q_e)$ to an element of $SH^{\rm loc}_{\overline\alpha,\overline R}(q_{e+1})$ and that it specializes to the map $\Phi_{\alpha,\bfk}\colon H^{\rm loc}_{\alpha,\bfk}(\zeta_{e})\to SH^{\rm loc}_{\overline\alpha,\overline\bfk}(\zeta_{e+1})$. We will check this on the generators $e(\ui)$, $X_re(\ui)$ and $\Psi_re(\ui)$ of $H^{\rm loc}_{\alpha,R}(q_e)$.

This is obvious for the idempotents $e(\ui)$.

Let us check this for $X_re(\ui)$.
Assume that $\ui\in I^\alpha$ and $\uj\in \widetilde I^{|\alpha|}$ are such that we have $\pi_e(\uj)=\ui$. Write $\ui'=\phi(\ui)$ and $\uj'=\widetilde\phi(\uj)$. Set $r'=r'_\uj=r'_\ui$, see the notation in Section \ref{ch3:subs_comp-pol-reps}.
By Theorem \ref{ch3:thm_KLR-e-e+1} and Proposition \ref{ch3:prop-isom_Hekce-KLR-loc} we have
$$
\Phi_{\alpha,K}(X_{r}e(\uj))={\overline p}_{j'_{r'}}^{-1}p_{j_r}X_{r'}e(\uj').
$$
Since, $\overline p_{j'_{r'}}^{-1}p_{j_r}$ depends only on $\ui$ and $r$ and $e(\ui)=\sum_{\pi_e(\uj)=\ui}e(\uj)$, we deduce that
$$
\Phi_{\alpha,K}(X_{r}e(\ui))=\overline p_{j'_{r'}}^{-1}p_{j_r}X_{r'}e(\ui').
$$
Thus the element $\Phi_{\alpha,K}(X_{r}e(\ui))$ is in $SH^{\rm loc}_{\overline\alpha,R}$ and its image in $SH^{\rm loc}_{\overline\alpha,\bfk}$ is $\overline p_{i'_{r'}}^{-1}p_{i_r}X_{r'}e(\ui')=\Phi_{\alpha,\bfk}(X_{r}e(\ui))$.

Next, we consider the generators $\Psi_re(\ui)$.
We must prove that for each $\uj$ such that $\pi_e(\uj)=\ui$ and for each $r$ we have
\begin{itemize}
    \item[\textbullet] $\Phi_{\alpha,K}(\Psi_re(\uj))=\Xi e(\uj')$, for some element $\Xi\in H^{\rm loc}_{\alpha,R}(q_e)$ that depends only on $r$ and $\ui$,
    \item[\textbullet] the image of $\Xi e(\ui')$ in $SH^{\rm loc}_{\overline\alpha,\overline\bfk}(q_{e+1})$ under the specialization $R\to\bfk$ is $\Phi_{\alpha,\bfk}(\Psi_{r}e(\ui))$.
\end{itemize}
This follows from Lemma \ref{ch3:lem_morph-Phi-intert-op}.

Now we obtain the diagram from the claim of Proposition \ref{ch3:prop_morph-Phi-over-ring} as the restriction of the diagram (\ref{ch3:eq_diag-localizations}).
\end{proof}

\subsection{Alternalive definition of a categorical representation}

There is an alternative definition of a categorical representation, where the KLR algebra is replaced by the affine Hecke algebra. 

Let $R$ be a $\bbC$-algebra. Fix an invertible element $q\in R$, $q\ne 1$. Let $\calC$ be an $R$-linear exact category.

\smallskip
\begin{df}
A \emph{representation datum} in $\calC$ is a tuple $(E,F,X,T)$ where $(E,F)$ is a pair of exact biadjoint functors $\calC\to\calC$ and $X\in\End(F)^{\rm op}$ and $T\in\End(F^2)^{\rm op}$ are endomorphisms of functors such that
for each $d\in\bbN$, there is an $R$-algebra homomorphism $\psi_d\colon H_{d,R}(q)\to \End(F^d)^{\rm op}$ given by
$$
\begin{array}{ll}
X_r\mapsto F^{d-r}XF^{r-1} &\forall r\in[1,d],\\
T_r\mapsto F^{d-r-1}TF^{r-1} &\forall r\in[1,d-1].
\end{array}
$$
 \end{df}

\smallskip
Now, assume that $R=\bfk$ is a field. Assume that $\calC$ is a $\Hom$-finite $\bfk$-linear abelian category. Let $\scrF$ be a subset of $\bfk^\times$ (possibly infinite). 
As in Section \ref{ch3:subs_isom-KLR-Hecke-gen}, we view $\scrF$ as the vertex set of a quiver with an arrow $i\to j$ if and only if $j=qi$.

\smallskip
\begin{df}
\label{ch3:def-categ_action-Hecke}
A $\mathfrak{g}_{\scrF}$-categorical representation in $\calC$ is the datum of a representation datum $(E,F,X,T)$ and a decomposition $\calC=\bigoplus_{\mu\in X_\scrF}\calC_\mu$ satisfying the conditions $(a)$ and $(b)$ below. For $i\in\scrF$, let $E_i$ and $F_i$ be endofunctors of $\calC$ such that for each $M\in\calC$ the objects $E_i(M)$ and $F_i(M)$ are the generalized $i$-eigenspaces of $X$ acting on $E(M)$ and $F(M)$ respectively, see also Remark \ref{ch3:rk_df-repl-F-by-E} $(a)$.
We assume
\begin{itemize}
    \item[$(a)$] $F=\bigoplus_{i\in\scrF}F_i$ and $E=\bigoplus_{i\in\scrF}E_i$,
    \item[$(b)$] $E_i(\calC_\mu)\subset \calC_{\mu+\alpha_i}$ and  $F_i(\calC_\mu)\subset \calC_{\mu-\alpha_i}$.
\end{itemize}
If the set $\scrF$ is infinite, condition $(a)$ should be understood in the same way as in Remark \ref{ch3:rk_df-repl-F-by-E} $(b)$.
\end{df}

\begin{rk}
\label{ch3:rk_cat-res-loc-H}
$(a)$ 
By definition, for each object $M\in \calC$ and each $d\in\mathbb{Z}_{\geqslant 0}$, we have $F_{i_d}\cdots F_{i_1}(M)\ne 0$ only for a finite number of sequences $(i_1,\cdots,i_d)\in \calF^d$. (Else, the endomorphism algebra of $F^d(M)$ is infinite-dimensional.) Then the homomorphism $H_{d,\bfk}(q)\to \End(F^d(M))^{\rm op}$ extends to a homomorphism  $\widehat H_{d,\bfk}(q)\to \End(F^d(M))^{\rm op}$ such that only a finite number of idempotents $e(\uj)$ has a nonzero image. (We define the action of $e(\ui)$ as the projection from $F^d$ to $F_{i_d}\ldots F_{i_1}$. Note that the action of $(X_r-X_t)^{-1}e(\ui)$ such that $i_r\ne i_t$ is well-defined because $X_r$ and $X_t$ have different eigenvalues. Similarly, the action of $(qX_r-X_t)^{-1}e(\ui)$ such that $r\ne t$ and $qi_r\ne i_t$ is well-defined.) In particular, we obtain a homomorphism $\widehat H_{d,\bfk}(q)\to \End(F^d)^{\rm op}$.



$(b)$ As in part $(a)$, if we have a categorical representation in the sense of Definition \ref{ch3:def-categ_action-KLR}, then the homomorphism $R_{d,\bfk}\to \End(F^d)^{\rm op}$ extends to a homomorphism $\widehat R_{d,\bfk}\to \End(F^d)^{\rm op}$. Then Proposition \ref{ch3:prop-isom_Hekce-KLR-widehat} impies that the two definitions of a categorical representation of $\frakg_\calF$ (Definition \ref{ch3:def-categ_action-KLR} and Definition \ref{ch3:def-categ_action-Hecke}) are equivalent.
\end{rk}

\subsection{Categorical representations over $R$}

We assume that the ring $R$ is as in Section \ref{ch3:subs_param-Hecke}. We are going to obtain an analogue of Theorem \ref{ch3:thm_categ-e-e+1} over $R$.

Let $\calC_R$, $\calC_\bfk$ and $\calC_K$ be $R$-, $\bfk$- and $K$-linear categories, respectively. Assume that $\calC_\bfk$ and $\calC_K$ are $\Hom$-finite $\bfk$-linear and $K$-linear abelian categories, respectively. Assume that the category $\calC_R$ is exact. Fix $R$-linear functors $\Omega_\bfk\colon \calC_R\to\calC_\bfk$ and $\Omega_K\colon \calC_R\to\calC_K$.

\begin{rk}
The first example of a situation as above that we should imagine is the following. Let $A$ be an $R$-algebra that is finitely generated as an $R$-module. We set $\calC_R=\mod(A)$, $\calC_\bfk=\mod(\bfk\otimes_R A)$, $\calC_K=\mod(K\otimes_R A)$, $\Omega_\bfk=\bfk\otimes\bullet$ and $\Omega_K=K\otimes\bullet$.

Another interesting situation (that in fact motivated the result of this section) is when $\calC_B$, for $B\in\{R,\bfk,H\}$, is the category $\calO$ for $\widehat{\mathfrak{gl}}_N$ over $B$ at a negative level. We do not want to assume in this section that the category $\calC_R$ is abelian because \cite{RSVV} constructs a categorical representation only in the $\Delta$-filtered category $\calO$ over $R$ (and not in the whole abelian category $\calO$ over $R$). 
\end{rk}
 
\begin{df}
\label{ch3:def-categ_rep_R}
A categorical representation of $(\widetilde{\mathfrak{sl}}_{e},\mathfrak{sl}_{\infty}^{\oplus l})$  in $(\calC_R,\calC_\bfk,\calC_K)$ is the following data:

\begin{itemize}
\item[$(1)$] a categorical representation of $\frakg_I=\widetilde{\mathfrak{sl}}_e$ in $\calC_\bfk$,
\item[$(2)$] a categorical representation of $\frakg_{\widetilde I}=\mathfrak{sl}_{\infty}^{\oplus l}$ in $\calC_K$,
\item[$(3)$] a representation datum $(E,F)$ in $\calC_R$ (with respect to the Hecke algebra $H_{d,R}(q_e)$) such that the functors $E$ and $F$ commute with $\Omega_\bfk$ and $\Omega_K$,
\item[$(4)$] lifts (with respect to $\Omega_\bfk$) of decompositions $E=\bigoplus_{i\in I}E_i$, $F=\bigoplus_{i\in I}F_i$ and $\calC_\bfk=\bigoplus_{X_I} \calC_{\bfk,\mu}$ from $\calC_\bfk$ to $\calC_R$
\end{itemize}
such that the following compatibility conditions are satisfied.

\begin{itemize}

\item The decomposition $\calC_R=\bigoplus_{\mu\in X_e} \calC_{R,\mu}$ is compatible with the decomposition $\calC_K=\bigoplus_{\widetilde\mu\in X_{\widetilde I}} \calC_{K,\widetilde\mu}$ (i.e., we have $\Omega_K(\calC_{R,\mu})\subset\bigoplus_{\pi_e(\widetilde\mu)=\mu}\calC_{K,\widetilde\mu}$).

\item The decompositions $E=\bigoplus_{i\in I}E_i$ and $F=\bigoplus_{i\in I}F_i$ in $\calC_R$ are compatible with the decompositions $E=\bigoplus_{j\in \widetilde I}E_j$ and $F=\bigoplus_{j\in \widetilde I}F_j$ in $\calC_K$ with respect to $\Omega_K$ (i.e., the functors $E_i=\bigoplus_{j\in \widetilde I,\pi_e(j)=i}E_j$ and $F_i=\bigoplus_{j\in \widetilde I,\pi_e(j)=i}F_j$ for $\calC_K$ correspond to the functors $E_i$, $F_i$ for $\calC_R$).

\item The actions of the Hecke algebras $H_{d,R}(q_e)$, $H_{d,\bfk}(\zeta_e)$ and $H_{d,K}(q_e)$ on $\End(F^d)^{\rm op}$ for $\calC_R$, $\calC_\bfk$ and $\calC_K$ are compatible with $\Omega_\bfk$ and $\Omega_K$. 


\end{itemize}

\end{df}

Proposition \ref{ch3:prop_morph-Phi-over-ring} yields the following version of Theorem \ref{ch3:thm_categ-e-e+1} over $R$.

\bigskip
Let $(\overline\calC_R,\overline\calC_\bfk,\overline\calC_K)$ be a categorical representation of $(\widetilde{\mathfrak{sl}}_{e+1},\mathfrak{sl}_{\infty}^{\oplus l})$. Assume that for each $\mu\in X_{\overline I}\backslash X^+_{\overline I}$ we have $\overline\calC_{\bfk,\mu}=\overline\calC_{R,\mu}=0$ and the for each $\widetilde\mu\in X_{\widetilde I}\backslash X^+_{\widetilde I}$ we have $\overline\calC_{K,\widetilde\mu}=0$. Let $\calC_R$, $\calC_\bfk$ and $\calC_K$ be the subcategories of $\overline\calC_R$, $\overline\calC_\bfk$ and $\overline\calC_K$ defined in the same way as in Section \ref{ch3:subs_cat-lem}. Then we have the following.

\begin{thm}
There is a categorical representation of $(\widetilde{\mathfrak{sl}}_{e},\mathfrak{sl}_{\infty}^{\oplus l})$ in $(\calC_R,\calC_\bfk,\calC_K)$.
\end{thm}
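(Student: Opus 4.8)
The plan is to mirror, level by level, the construction already carried out over a field in Section \ref{ch3:subs_cat-lem}, using the deformation-theoretic machinery of Proposition \ref{ch3:prop_morph-Phi-over-ring} to handle the $R$-linear layer. First I would observe that the three categories $\calC_\bfk$, $\calC_K$ and $\calC_R$ obtained by the recipe of Section \ref{ch3:subs_cat-lem} inherit the structure one expects: applying Theorem \ref{ch3:thm_categ-e-e+1} to $\overline\calC_\bfk$ gives a $\widetilde{\mathfrak{sl}}_e$-categorical representation in $\calC_\bfk$ (this is literally Theorem \ref{ch3:thm_categ-e-e+1}), and applying Theorem \ref{ch3:thm_categ-e-e+1-app} with $\widetilde\Gamma$ in place of $\Gamma$ (so $I_1=\{(a,b);~a\equiv k\bmod e\}$, cf.\ Section \ref{ch3:subs_not-e-e+1}) gives a $\mathfrak{sl}_\infty^{\oplus l}$-quasi-categorical representation in $\calC_K$; since $\overline\calC_{K,\widetilde\mu}=0$ outside $X^+_{\widetilde I}$ the hypotheses $(a)$–$(d)$ of that theorem are verified by the remark following it. So parts $(1)$ and $(2)$ of Definition \ref{ch3:def-categ_rep_R} come for free.

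Next I would build the representation datum on $\calC_R$ of part $(3)$. Set $E=\bigoplus_{i\in I}E_i$, $F=\bigoplus_{i\in I}F_i$ with $E_i,F_i$ defined from $\overline E_j,\overline F_j$ exactly as in Section \ref{ch3:subs_cat-lem} (composing the two functors over $k$ and $k+1$ for the index $i=k$). Biadjointness and exactness are inherited from $\overline E,\overline F$. The nontrivial point is producing, for each $d$, an $R$-algebra map $\psi_d\colon H_{d,R}(q_e)\to\End(F^d)^{\rm op}$. Here I would argue $\overline\alpha$-component by $\overline\alpha$-component: the datum on $\overline\calC_R$ gives $\overline\psi_{\overline\alpha}\colon H_{\overline\alpha,R}(q_{e+1})\to\End(\overline F_{\overline\alpha})^{\rm op}$; restricting to $\calC_R$ and using that $\overline F_{i^2}(\calC_R)=0$ for the relevant vertices (this is the analogue of condition (c)/Remark following Section \ref{ch3:subs_cat-lem}, which holds here because $\overline\calC_{R,\mu}=0$ off $X^+_{\overline I}$), together with the already-established vanishing over $\bfk$ and $K$ and the fact that $\widehat H_{\overline\alpha,R}(q_{e+1})\to\widehat H^{\leqslant N}_{\overline\alpha,\overline K}(q_{e+1})$ is injective (Lemma \ref{ch3:lem-inj_R_K_H}), one sees the restricted homomorphism factors through $\widehat{SH}_{\overline\alpha,\overline R}(q_{e+1})$. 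Pulling back along $\Phi_{\alpha,R}$ from Proposition \ref{ch3:prop_morph-Phi-over-ring} yields $\psi_\alpha\colon \widehat H_{\alpha,R}(q_e)\to\End(F_\alpha)^{\rm op}$, and restricting to the non-completed Hecke algebra gives the desired $\psi_d$. The lifts in part $(4)$ are defined by the same formulas; the decomposition $\calC_R=\bigoplus_{\mu\in X_I^+}\calC_{R,\mu}$ with $\calC_{R,\mu}=\overline\calC_{R,\phi(\mu)}$ is visibly a lift of the corresponding decomposition of $\calC_\bfk$.

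Finally I would check the three compatibility bullets of Definition \ref{ch3:def-categ_rep_R}. The compatibility of the weight decompositions follows from the commutativity of the square $\pi_e\circ\widetilde\phi=\phi\circ\pi_{e+1}$ (displayed in Section \ref{ch3:subs_not-e-e+1}) applied to the hypothesis $\Omega_K(\overline\calC_{R,\mu})\subset\bigoplus_{\pi_e(\widetilde\mu)=\mu}\overline\calC_{K,\widetilde\mu}$ for the ambient datum; the compatibility of the functor decompositions $E_i=\bigoplus_{\pi_e(j)=i}E_j$ likewise descends from the ambient datum via $\pi_{e+1}$; and the compatibility of the Hecke actions is exactly the commutativity of the diagram in Proposition \ref{ch3:prop_morph-Phi-over-ring}, transported through Remark \ref{ch3:rk_4-isom-KLR-Hecke} and the identification of $\psi_\alpha$ as $\overline\psi'_{\overline\alpha}\circ\Phi_{\alpha,R}$ at each level. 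The main obstacle I anticipate is the bookkeeping in part $(3)$: one must be careful that the factorization through $\widehat{SH}_{\overline\alpha,\overline R}$ is legitimate over the non-complete, non-field ring $R$ — i.e.\ that the kernel of $\bfe\widehat H_{\overline\alpha,\overline R}(q_{e+1})\bfe\to\widehat{SH}_{\overline\alpha,\overline R}(q_{e+1})$ is killed by the restricted action — and for this one leans on the faithful deformed polynomial representation $\widehat\Pol_R$ from the proof of the lemma preceding Section \ref{ch3:subs_deform-Phi}, reducing the $R$-statement to the already-known $K$-statement plus specialization to $\bfk$. Once that reduction is in place the rest is routine diagram-chasing.
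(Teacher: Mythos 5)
Your proposal follows essentially the same route as the paper's (very brief) proof: obtain the $\bfk$-level from Theorem~\ref{ch3:thm_categ-e-e+1}, the $K$-level by the analogous construction for the quiver $\widetilde\Gamma$, build the $R$-level representation datum from $\Phi_{\alpha,R}$ of Proposition~\ref{ch3:prop_morph-Phi-over-ring}, and observe that the compatibilities descend from the ambient $(\overline\calC_R,\overline\calC_\bfk,\overline\calC_K)$-datum. Your elaboration of the $R$-level step (vanishing of $\overline F_{i^2}$ on $\calC_R$, factoring through $\widehat{SH}_{\overline\alpha,\overline R}(q_{e+1})$ via the injectivity in Lemma~\ref{ch3:lem-inj_R_K_H} and the faithful deformed polynomial representation, then pulling back along $\Phi_{\alpha,R}$ and restricting to the non-localized Hecke algebra) is exactly the content the paper compresses into one sentence, so that part is sound and useful.

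The one place you deviate is the $K$-level: you invoke Theorem~\ref{ch3:thm_categ-e-e+1-app}, which produces a $\frakg_{\widetilde I}$-\emph{quasi}-categorical representation (graded by the root lattice $Q_{\widetilde I}$), whereas Definition~\ref{ch3:def-categ_rep_R}~(2) asks for a genuine $\frakg_{\widetilde I}$-categorical representation (graded by $X_{\widetilde I}$). The paper instead says to repeat the proof of Theorem~\ref{ch3:thm_categ-e-e+1} with $\Gamma_e$, $\Gamma_{e+1}$ replaced by $\widetilde\Gamma$, $\overline{\widetilde\Gamma}$, which keeps the $X$-grading throughout (via the maps $\phi$ on $X_{\widetilde I}$ from (\ref{ch3:eq_phi(mu)}) and the assumption $\overline\calC_{K,\widetilde\mu}=0$ for $\widetilde\mu\notin X^+_{\widetilde I}$). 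This is a small slip rather than a structural gap — the KLR-level input (Theorem~\ref{ch3:thm_KLR-e-e+1}) is identical either way, and in the type~$A$ setting with $\iota\colon Q_{\widetilde I}\hookrightarrow X_{\widetilde I}$ the two gradings communicate — but as written your construction yields the wrong flavor of representation for item~(2). Replacing the appeal to Theorem~\ref{ch3:thm_categ-e-e+1-app} by the $\widetilde\Gamma$-analogue of Theorem~\ref{ch3:thm_categ-e-e+1} (as the paper does) repairs this.
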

\begin{proof}
We obtain a categorical representation of $\widetilde{\mathfrak{sl}}_e$ in $\calC_\bfk$ by Theorem \ref{ch3:thm_categ-e-e+1}. A similar argument as in the proof of Theorem \ref{ch3:thm_categ-e-e+1} yields a categorical representation of $\mathfrak{sl}_{\infty}^{\oplus l}$ in $\calC_K$ (we just have to replace the isomorphism $\Phi$ from section \ref{ch3:subs-isom_Phi} associated with the quivcer $\Gamma_e$ by a similar isomorphism associated with the quiver $\widetilde \Gamma$.) To construct a representation datum in $\calC_R$, we use the homomorphism $\Phi_{\alpha,R}$ from Proposition \ref{ch3:prop_morph-Phi-over-ring}. All axioms of a $(\widetilde{\mathfrak{sl}}_{e},\mathfrak{sl}_{\infty}^{\oplus l})$-categorical representation in $(\calC_R,\calC_\bfk,\calC_K)$ follow automatically from the axioms of a categorical representation of $(\widetilde{\mathfrak{sl}}_{e+1},\mathfrak{sl}_{\infty}^{\oplus l})$  in $(\overline\calC_R,\overline\calC_\bfk,\overline\calC_K)$.
\end{proof}

\section*{Acknowledgements}
I am grateful for the hospitality of the Max-Planck-Institut f\"ur Mathematik in Bonn, where a big part of this work is done.
I would like to thank \'Eric Vasserot for his guidance and helpful
discussions during my work on this paper. I would like to thank Alexander Kleshchev for useful discussions about KLR algebras. I would like to thank C\'edric Bonnaf\'e for his comments on an earlier version of this paper. I would also like to thank the anonymous reviewer for the careful reading and constructive comments.

\end{document}